\newtheorem{thm}{Theorem}
\newtheorem{lemma}{Lemma}
\newtheorem{false statement}{False statement}
\theoremstyle{definition}
\newtheorem{claim}{Claim}
\newtheorem{remark}[claim]{Remark}
\newtheorem{problem}{Problem}
\begin{document}

\title{Eigenvalues of signed graphs
\footnote{Supported by NSFC (Nos. 11901498, 11771141 and 12011530064), Tianshan Youth Project of Xinjiang(No. 2019Q069), the Scientific Research Plan of Universities in Xinjiang, China (No. XJEDU2021I001), XJTCDP (No. 04231200746), BS (No. 62031224601).}}
\author{Dan Li$^a$, Huiqiu Lin$^{b}$\thanks{Corresponding author. E-mail: huiqiulin@126.com (H.Q. Lin), ldxjedu@163.com (D. Li), mjx@xju.edu.cn (J.X. Meng).}, Jixiang Meng$^a$\\
{\footnotesize $^a$College of Mathematics and System Science, Xinjiang University, Urumqi 830046, PR China}\\
{\footnotesize $^b$School of Mathematics, East China University of Science and Technology, Shanghai 200237, PR China}}
\date{}

\maketitle {\flushleft\large\bf Abstract:}
Signed graphs have their edges labeled either as positive or
negative. $\rho(M)$ denote the $M$-spectral radius of $\Sigma$, where $M=M(\Sigma)$ is a real symmetric graph matrix of $\Sigma$. Obviously, $\rho(M)=\mbox{max}\{\lambda_1(M),-\lambda_n(M)\}$.
Let $A(\Sigma)$ be the adjacency matrix of $\Sigma$ and $(K_n,H^-)$ be a signed complete graph whose negative edges induce a subgraph $H$. In this paper, we first focus on a central problem in spectral extremal graph theory as follows:
Which signed graph with maximum $\rho(A(\Sigma))$ among $(K_n,T^-)$ where $T$ is a spanning tree?
To answer the problem, we characterize the extremal signed graph with maximum $\lambda_1(A(\Sigma))$ and minimum $\lambda_n(A(\Sigma))$ among $(K_n,T^-)$, respectively.
Another interesting graph matrix of a signed graph is distance matrix, i.e. $D(\Sigma)$ which was defined by Hameed, Shijin, Soorya, Germina and Zaslavsky \cite{S.K. Hameed}. Note that $A(\Sigma)=D(\Sigma)$ when $\Sigma\in (K_n,T^-)$. In this paper, we give upper bounds on the least distance eigenvalue of a signed graph $\Sigma$ with diameter at least 2. This result implies a result proved by Lin \cite{H.Q. Lin-1} was originally conjectured by Aouchiche and Hansen \cite{M. Aouchiche}.

\vspace{0.1cm}
\begin{flushleft}
\textbf{Keywords:} Signed graph; signed distance matrix; spectral radius; the least distance eigenvalue
\end{flushleft}
\textbf{AMS Classification:} 05C50; 05C35

\section{Introduction}
All the underlying graphs in our consideration are simple and connected, unless otherwise stated. A signed graph $\Sigma=(G,\sigma)$ consists of a underlying graph $G=(V,E)$ with a signature function $\sigma:E\rightarrow\{-1,1\}$.
The (unsigned) graph $G$ is said to be the underlying graph of $\Sigma$, while the function $\sigma$ is called the signature of $\Sigma$. In signed graphs, edge signs are usually interpreted as $\pm1$. Signed graphs first appeared in works of Harary \cite{F. Harary} and Cartwright and
Harary \cite{Cartwright D}, and the matroids of graphs were extended to matroids of signed graphs
by Zaslavsky \cite{T. Zaslavsky}. Chaiken \cite{S. Chaiken} and Zaslavsky \cite{T. Zaslavsky} obtained the Matrix-Tree Theorem for signed graph independently. In fact, the theory of signed graphs is a special case of that of gain graphs and of biased graphs \cite{T. Zaslavsky1}. In the very beginnings, these graphs are studied in the context of social psychology where the vertices are considered as individuals, while positive edges represent friendships and negative edges enmities between them. The notion of balance was introduced by Harary in \cite{F. Harary}, it plays a central role in
the matroid theory of signed graphs. A signed cycle is called positive (resp. negative) if it contains an even (resp. odd) number of negative edges. A signed graph is balanced if all its cycles are positive; otherwise it is unbalanced. Unsigned graphs are treated as (balanced) signed graphs where all edges get a positive sign, that is, the all-positive signature.

Let $M=M(\Sigma)$ be a real symmetric graph matrix of a signed graph $\Sigma=(G,\sigma)$ and $P_{M}(\lambda)=\mbox{det}(\lambda I-M)$ be the $M$-polynomial. The spectrum of $M$ is called the $M$-spectrum of the signed graph $\Sigma$. As usual, we use
$\lambda_1(M) \geq\lambda_2(M) \geq\cdots  \geq\lambda_n(M)$
to denote the spectrum of $M$. The adjacency matrix of $\Sigma$ is defined as $A(\Sigma)=(a^{\sigma}_{ij})$, where $a^{\sigma}_{ij}=\sigma(v_iv_j)$ if $v_i \thicksim v_j$,  and $0$ otherwise.

Our first motivation is from Koledin and Stani\'{c} \cite{T. Koledin} who studied the connected signed graphs of fixed order, size, and number of negative edges with maximum index. In the paper, they conjectured that if $\Sigma$ is a signed complete graph of order $n$ with $k$ negative edges, $k<n-1$ and $\Sigma$ has maximum index, then the negative edges induce the signed star $K_{1,k}$.
Akbari, Dalvandi, Heydari and Maghasedi \cite{S. Akbari} proved the conjecture holds for signed complete graphs whose negative edges form a tree. Very recently, Ghorbani and Majidi \cite{E. Ghorbani} confirmed the conjecture.
In this paper, we first consider an unbalanced signed complete graph of order $n$ with $k$ negative edges whose negative edges form a spanning tree, i.e., $k=n-1$. Let $T_{a,b}$ denote the double star obtained by adding $a$ pendent vertices to one end vertex of $P_2$ and $b$ pendent vertices to the other. Then we have the following result.

\vspace*{2mm}
\begin{thm} \label{lm2.02} Let $\Sigma$ be an unbalanced signed complete graphs with order $n\geq6$ whose negative edges
form a spanning tree $T$ and maximizes the $\lambda_1(A(\Sigma))$, then $T\cong T_{1,n-3}$.
\end{thm}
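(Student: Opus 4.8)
Throughout put $A=A(\Sigma)=(J-I)-2A(T)$, so that for a unit vector $\mathbf x$,
$$\mathbf x^{\top}A\mathbf x=(\mathbf 1^{\top}\mathbf x)^{2}-1-4\sum_{ij\in E(T)}x_ix_j .$$
One first records the elementary fact that $(K_n,T^-)$ is balanced exactly when $T\cong K_{1,n-1}$: switching at the centre of a star turns all its edges positive, while for any non-star $T$ one finds a negative triangle (a tree edge $xy$ together with a vertex adjacent in $T$ to neither $x$ nor $y$). Hence ``unbalanced'' in the statement simply means ``$T$ is not the star'', and the task is to show $T_{1,n-3}$ maximises $\lambda_1$ over all non-star spanning trees. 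Fix the extremal $\Sigma$, a unit $\lambda_1$-eigenvector $\mathbf x$, and $s:=\mathbf 1^{\top}\mathbf x$. Testing $\mathbf 1/\sqrt n$ gives $\lambda_1\ge (n-1)(n-4)/n>0$, so $\lambda_1+1>0$ and the eigenvalue equation becomes
$$(\lambda_1+1)\,x_v=s-2\sum_{w\sim_T v}x_w\qquad(v\in V);$$
in particular two leaves of $T$ with a common neighbour have equal entries. Moreover $\Sigma$ beats the admissible candidate $T_{1,n-3}$, so $\lambda_1\ge\lambda_1(A(K_n,T_{1,n-3}^-))$, and the computation below shows the latter exceeds $n-2$. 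Finally $s\neq0$: if $s=0$ then $\lambda_1=-1-2\mathbf x^{\top}A(T)\mathbf x\le -1+2\rho(T)\le -1+2\sqrt{n-1}<n-2$ for $n\ge6$, a contradiction; so we may assume $s>0$.

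The heart of the proof is to show $\mathrm{diam}(T)\le 3$ by an edge–relocation (Rayleigh) argument. If $\ell$ is a leaf of $T$ with neighbour $p$, $w\in V\setminus\{\ell,p\}$, and $\Sigma'$ is obtained by replacing the tree edge $\ell p$ by $\ell w$, then $\Sigma'$ is again admissible (the relocated tree is the star only when $T\cong T_{1,n-3}$, excluded here since $\mathrm{diam}(T)\ge4$), and
$$\mathbf x^{\top}A(\Sigma')\mathbf x=\lambda_1+4\,x_\ell\,(x_p-x_w).$$
Extremality forces $x_\ell(x_p-x_w)\le 0$ for all such $w$; as $(\lambda_1+1)x_\ell=s-2x_p$ fixes $\mathrm{sgn}\,x_\ell$, this yields that (unless $x_\ell=0$) the neighbour of $\ell$ attains $\min_v x_v$ or $\max_v x_v$. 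The same device applied to an internal edge $e=ab$ — delete $e$ and reconnect the two parts by a different crossing edge $ij$ — gives $x_ax_b\le x_ix_j$, i.e.\ $T$ is a minimum spanning tree for the weights $x_ix_j$ among non-star spanning trees. Consequently every leaf hangs at one of the (at most two) extreme-valued vertices, so the subtree $T[I]$ induced by the set $I$ of non-leaf vertices has all its own leaves among these extreme vertices; hence $T[I]$ is a path whose endpoints are the extreme vertices, and each of its internal vertices has degree $2$ in $T$. A last round of relocations along this path (moving such a degree-$2$ vertex, or the pendant subtree beyond it, increases $\mathbf x^\top A\mathbf x$) forces $T[I]$ to have at most two vertices. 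Thus $T$ is a star or a double star; the star being excluded, $T\cong T_{a,b}$ with $a+b=n-2$ and $1\le a\le b$.

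It remains to optimise among double stars. Let $u,v$ be the centres of $T_{a,b}$, with pendant sets $P$ ($|P|=a$, at $u$) and $Q$ ($|Q|=b$, at $v$). Switching at $v$ identifies $(K_n,T_{a,b}^-)$ with $(K_n,H^-)$, where $H$ is the set of all edges between $\{u,v\}$ and $P$; then $\{u,v\},P,Q$ is an equitable partition with quotient matrix
$$B=\begin{pmatrix}1&-a&b\\ -2&a-1&b\\ 2&a&b-1\end{pmatrix}.$$
The $\lambda_1$-eigenvector is constant on the parts ($u,v$ are twins, $P$ and $Q$ are sets of leaf–twins), and the remaining $n-3$ eigenvalues of $A(\Sigma)$ all equal $-1$, so $\lambda_1(A(\Sigma))$ is the largest root of $\det(\lambda I-B)$. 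Expanding and using $a+b=n-2$ gives
$$\det(\lambda I-B)=(\lambda+1)^{2}\big(\lambda-(n-1)\big)+8ab,$$
so $\lambda_1(A(\Sigma))$ is the largest root of $g(\lambda):=(\lambda+1)^2(n-1-\lambda)=8ab$. The cubic $g$ is unimodal on $[-1,n-1]$ with maximum $4n^3/27$ at $\lambda=(2n-3)/3$; since $1\le a\le b$ with $a+b=n-2$ gives $0<8ab\le 2(n-2)^2\le 4n^3/27$ for $n\ge6$, the largest root lies in $[(2n-3)/3,n-1)$, where $g$ is a strictly decreasing bijection onto $(0,4n^3/27]$. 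Hence $\lambda_1(A(\Sigma))$ strictly decreases as $ab$ grows, so it is maximised precisely when $ab$ is smallest, namely $a=1$, $b=n-3$; and for that choice $g(\lambda)=8(n-3)$ with $g(n-2)=(n-1)^2>8(n-3)$ ($n\ge6$), so the root lies in $(n-2,n-1)$, justifying the bound used above. Therefore $T\cong T_{1,n-3}$.

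The main obstacle is the reduction $\mathrm{diam}(T)\le3$: the relocation identities themselves are immediate, but turning them into a clean contradiction requires care with vertices having $x_v=0$ (where a single relocation gives no information), with ties among the extreme entries, and with the fact that certain relocations are forbidden because they reach the excluded star; organising the longest-path analysis so that the minimum–spanning–tree inequalities genuinely close up is the delicate part, and is where the hypothesis $n\ge6$ is also used. Everything after that — the quotient matrix, its characteristic polynomial, and the elementary study of $g$ — is routine.
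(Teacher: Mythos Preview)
Your optimisation over double stars is correct and cleaner than the paper's: switching at one centre to make $\{u,v\}$ twins, reading off the $3\times3$ quotient, and analysing the cubic $g(\lambda)=(\lambda+1)^2(n-1-\lambda)=8ab$ is more transparent than quoting the degree-five polynomial from \cite{S. Akbari} and differencing $g_{s,t}-g_{s-1,t+1}$ as the paper does in Remark~\ref{rek5.11}. The bound $\lambda_1(K_n,T_{1,n-3}^-)>n-2$ via $g(n-2)=(n-1)^2>8(n-3)$ and the argument that $s\neq0$ are also fine.

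The genuine gap is exactly where you place it: the reduction to $\mathrm{diam}(T)\le3$. The leaf relocation gives only that the parent of a leaf $\ell$ with $x_\ell\neq0$ is extremal over $V\setminus\{\ell\}$, and the Rayleigh quotient alone yields nothing when $x_\ell=0$ or when extreme values are tied; yet your passage from the MST inequalities to ``$T[I]$ is a path with $|I|\le2$'' (your step (d), ``a last round of relocations'') names no specific move and is simply asserted. For the strict conclusions one needs the sharpened swap lemma (Lemma~\ref{lm2.01} here, from Koledin--Stani\'c): if even one relevant entry is nonzero, equality of Rayleigh quotients would force $\mathbf x$ to be an eigenvector of \emph{both} signed graphs, contradicting the eigenvalue equation at the swapped vertex. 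The paper does not argue via diameter at all: it orders vertices by $x_i$ and splits into cases. When $\mathbf x\ge0$, a chain of six single-swap claims forces $d_T(v_1)=n-2$ with $N_T(v_1)=\{v_2,\dots,v_{n-1}\}$ and $v_2v_n\in E(T)$, giving $T\cong T_{1,n-3}$ directly. When $\mathbf x$ has both signs, the key structural step is that $V_+$ and $V_-$ are each \emph{independent} in $T$ (via a cross-sign swap, not a leaf move), after which $u_1$ absorbs all of $V_-$ and one lands on $T_{a-1,b-1}$; this independence does not fall out of your MST framework, and your outline does not address the mixed-sign case separately. Until the path-shortening is executed or the independence of the sign classes is recovered, the reduction to double stars is incomplete.
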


Note that $A(\Sigma)$ is a real symmetric matrix.
Then the spectral radius of $\Sigma$ is the largest
absolute value of the eigenvalues of $A(\Sigma)$, denoted by $\rho(A(\Sigma))$, i.e., $\rho(A(\Sigma))=\mbox{max}\{\lambda_1(A(\Sigma)),-\lambda_n(A(\Sigma))\}$.
A further question is asked as follows.

\vspace*{2mm}
\begin{problem}\label{p111}
Which connected signed graphs with maximum $\rho(A(\Sigma))$ among the unbalanced signed complete graphs with order $n$ whose negative edges
form a spanning tree?
\end{problem}

In order to give the answer to problem \ref{p111}, a key problem is to characterize the unbalanced signed complete graph with order $n$ whose negative edges
form a spanning tree minimizes $\lambda_n(A(\Sigma))$. Up to now, a lot of researchers pay attention to $\lambda_n(A(\Sigma))$. Vijayakumar \cite{G. R. Vijayakumar} showed that any connected signed graph with smallest eigenvalue less than $-2$ has an induced signed subgraph with at most 10 vertices and smallest eigenvalue less than $-2$. Chawathe and Vijayakumar \cite{P. D. Chawathe} determined all minimal forbidden signed graphs for the class of signed graphs whose smallest eigenvalue is at least $-2$. Vijayakumar \cite{G. R. Vijayakumar1} showed that any signed graph with least eigenvalue $<-2$ contains an induced subgraph the least eigenvalue of which equals $-2$, Singhi and Vijayakumar \cite{N.M. Singhi} gave a simple proof subsequently.
Let $\mathcal{T}_{a-1,b-1}$ (see Figure 1) denote the graph obtained from $K_{1,a-1}$ and $K_{1,b-1}$ by adding an edge between two pendent vertices of them. In the paper, we determine the unbalanced signed complete graphs with order $n$ whose negative edges form a spanning tree $T$, which minimizes $\lambda_n(A(\Sigma))$.

\vspace*{2mm}
\begin{thm}\label{thm5.3}
Let $\Sigma$ be an unbalanced signed complete graphs with order $n\geq6$ whose negative edges
form a spanning tree $T$ and minimizes the $\lambda_n(A(\Sigma))$. Then $T\cong\mathcal{T}_{\lceil\frac{n}{2}\rceil-1,\lfloor\frac{n}{2}\rfloor-1}$.
\end{thm}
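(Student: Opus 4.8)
The plan is to exploit the identity $A(\Sigma)=J-I-2A(T)$, which holds because in $\Sigma=(K_n,T^-)$ the negative edges are exactly the edges of $T$ and the positive edges are exactly the non-edges of $T$. Fix a minimizer $T$, let $\mathbf{x}$ be a unit eigenvector of $A(\Sigma)$ for $\lambda:=\lambda_n(A(\Sigma))$, and set $s:=\mathbf{1}^{\top}\mathbf{x}$, so that for every $v\in V$,
\[
(\lambda+1)\,x_v=s-2\sum_{u\sim_T v}x_u .
\]
Since the balanced double broom is an admissible competitor, $\lambda$ is at most its least eigenvalue, which is $<-1$; hence $\lambda+1<0$, and in particular twin leaves (leaves with a common $T$-neighbour) carry equal entries of $\mathbf{x}$, so blocks of such leaves can be treated as single cells. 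The engine of the argument is one estimate: if $uv\in E(T)$ and $w$ lies in the component of $T-uv$ containing $u$, then deleting $uv$ and adding $wv$ produces a spanning tree $T'$ with $\mathbf{x}^{\top}A(\Sigma')\mathbf{x}=\lambda+4x_v(x_u-x_w)$, so minimality forces $x_v(x_u-x_w)\ge 0$ for every such triple; informally, seen from any $T$-neighbour $v$ of $u$, the entry $x_u$ is extremal in $u$'s branch (largest if $x_v>0$, smallest if $x_v<0$).

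Applying this repeatedly, I would first rule out two independent vertices of degree $\ge 3$ and then a vertex of degree $\ge 3$ interior to the remaining path, which together force $T$ to be a double broom — a path carrying all its leaves at the two ends. Next I would pin down that the central path of this double broom has exactly four vertices, by comparing double brooms with shorter or longer spines: one slides a spine vertex out to become a pendant leaf, or pulls a pendant leaf into the spine, and checks the sign of the resulting first-order change in $\mathbf{x}^{\top}A(\Sigma')\mathbf{x}$. I expect this to be the main obstacle, because converting the rotation inequalities into the statement ``double broom with a four-vertex spine'' requires careful control of the signs of the entries of $\mathbf{x}$ along the spine, and the regimes $s=0$ and $s\ne 0$ behave differently: when $s=0$ the displayed equation is just the eigenvalue equation of $A(T)$ for an eigenvector orthogonal to $\mathbf{1}$, which gives extra leverage, whereas when $s\ne 0$ one must argue with the full equation.

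It then remains to optimize the split $(p,q)$ of the $n-4$ pendant leaves between the two ends of the four-vertex spine. For such a graph $A(\Sigma)$ has an equitable partition into six cells (the $p$ leaves, the four spine vertices, the $q$ leaves), so apart from the eigenvalue $-1$ of multiplicity $n-6$ produced by leaf-differences, the spectrum of $A(\Sigma)$ is the spectrum of an explicit $6\times 6$ quotient matrix $B_{p,q}$, and hence $\lambda_n(A(\Sigma))=\lambda_{\min}(B_{p,q})$. Swapping the two brooms is an isomorphism between the graphs for $(p,q)$ and for $(q,p)$, so the characteristic polynomial of $B_{p,q}$ is symmetric in $p,q$, hence a polynomial in $p+q$ and $pq$; for fixed $p+q=n-4$ I would show its least root is strictly decreasing in $pq$, so it is minimized exactly when $pq$ is maximal, i.e.\ when $|p-q|\le 1$. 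The assumption $n\ge 6$ is what guarantees the extremal configuration is a genuine double broom with the stated parameters and keeps these comparisons strict. Rewriting the balanced four-vertex-spine double broom in the notation of Figure~1 gives precisely $T\cong\mathcal{T}_{\lceil n/2\rceil-1,\lfloor n/2\rfloor-1}$. This last monotonicity in $pq$ is the second place where genuine work is needed, since it is an eigenvalue comparison rather than a consequence of the test-vector estimate.
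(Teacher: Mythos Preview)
Your rotation inequality is the paper's Lemma~3, and your final step (factor out $(\lambda+1)^{n-6}$ and show the least root of the residual degree-six factor is monotone in the product $ab$ for fixed $a+b=n$) is the paper's Lemma~5; so the two ends of your plan match the paper. The middle is where the trouble lies. You propose to ``rule out two independent vertices of degree $\ge 3$'', but the target tree $\mathcal{T}_{\lceil n/2\rceil-1,\lfloor n/2\rfloor-1}$ itself has two non-adjacent star centres, and for $n\ge 8$ both have degree $\ge 3$; so that step is false as stated. Even once reformulated (you presumably want ``no high-degree vertex except at the two broom centres''), the passage from the rotation inequalities to ``double broom with a four-vertex spine'' is precisely the part you flag as unfinished, and your $s=0$ versus $s\ne 0$ dichotomy is not the one that unlocks it.

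The paper bypasses this by partitioning the vertices according to the \emph{sign} of the eigenvector entries, $V_+=\{v:x_v\ge 0\}$ and $V_-=\{v:x_v<0\}$. The rotation lemma then shows in one stroke that exactly one $T$-edge crosses between $V_+$ and $V_-$ (a second crossing edge could be rotated to lie inside a sign class, strictly decreasing $\lambda_n$), and a repeat of the argument inside each class forces $T[V_+]$ and $T[V_-]$ to be stars centred at the entries of maximal absolute value. The unique crossing edge together with the two star centres \emph{is} the four-vertex spine, so $T\cong\mathcal{T}_{|V_+|-1,\,|V_-|-1}$ drops out directly, with no separate spine-length analysis needed. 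The degenerate case where the eigenvector has only one sign yields $T\cong T_{1,n-3}$ instead, and that candidate is eliminated afterwards by polynomial comparison ($\lambda_n>-4$ there versus $\lambda_n<-4$ for the balanced $\mathcal{T}$ when $n\ge 8$, with $n=6,7$ checked by hand).
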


Let $(K_n,H^-)$ be a signed complete graph whose negative edges induce a subgraph $H$. Note that $\lambda_1(A((K_5, T^-_{1,2})))=-\lambda_n(A((K_5,P^-_5)))=3$ for $n=5$ and $-\lambda_n(A((K_n, \mathcal{T}^-_{\lceil\frac{n}{2}\rceil-1,\lfloor\frac{n}{2}\rfloor-1})))<n-2<\lambda_1(A((K_n, T^-_{1,n-3})))$ for $n\geq6$. Then by Theorems \ref{lm2.02} and \ref{thm5.3}, we finally give an answer to problem \ref{p111}.

\vspace*{2mm}
\begin{thm}\label{thm666}
Let $\Sigma$ be an unbalanced signed complete graph with order $n$ whose negative edges
form a spanning tree $T$ and maximizes the $\rho(A(\Sigma))$. Then
\begin{enumerate}
   \item[(I)] $T\cong T_{1,2}$ or $T\cong P_5$ when $n=5$;
   \item[(II)] $T\cong T_{1,n-3}$ for $n\geq6$.
 \end{enumerate}
\end{thm}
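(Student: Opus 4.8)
The plan is to deduce Theorem~\ref{thm666} from the two extremal results already in hand, Theorems~\ref{lm2.02} and~\ref{thm5.3}, together with the elementary identity $A((K_n,T^-))=J_n-I_n-2A(T)$ (where $A(T)$ is the ordinary adjacency matrix of the spanning tree $T$) and the identity $\rho(A(\Sigma))=\max\{\lambda_1(A(\Sigma)),-\lambda_n(A(\Sigma))\}$. The one piece of genuinely new input is the pair of scalar estimates
\[
\lambda_1(A((K_n,T^-_{1,n-3})))>n-2>-\lambda_n(A((K_n,\mathcal{T}^-_{\lceil\frac{n}{2}\rceil-1,\lfloor\frac{n}{2}\rfloor-1})))\qquad(n\ge6),
\]
asserting that the value $n-2$ separates the largest eigenvalue of the $\lambda_1$-maximiser from the modulus of the least eigenvalue of the $\lambda_n$-minimiser. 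I would prove the left-hand inequality by taking the equitable partition of $(K_n,T^-_{1,n-3})$ into the cells $\{u\},\{w\},\{v\},L$ (where $uv$ is the central edge, $w$ is the single pendant at $u$, and $L$ is the set of $n-3$ pendants at $v$), so that $\lambda_1$ equals the largest root of an explicit $4\times4$ quotient matrix, and then checking that this root exceeds $n-2$ (equivalently, evaluating the Rayleigh quotient $\mathbf{x}^{\top}A\mathbf{x}/\mathbf{x}^{\top}\mathbf{x}$ at a small perturbation of the all-ones vector that is negative on $v$). The right-hand inequality is handled the same way, through the constant-size quotient matrix of the equitable partition of $(K_n,\mathcal{T}^-_{\lceil\frac{n}{2}\rceil-1,\lfloor\frac{n}{2}\rfloor-1})$ and an upper bound on the modulus of its least eigenvalue.

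Granting these estimates, the case $n\ge6$ is purely formal. Let $\Sigma=(K_n,T^-)$ be any unbalanced signed complete graph of order $n$ whose negative edges form a spanning tree. By Theorem~\ref{lm2.02}, $\lambda_1(A(\Sigma))\le\lambda_1(A((K_n,T^-_{1,n-3})))$, with equality only if $T\cong T_{1,n-3}$; by Theorem~\ref{thm5.3}, $-\lambda_n(A(\Sigma))\le-\lambda_n(A((K_n,\mathcal{T}^-_{\lceil\frac{n}{2}\rceil-1,\lfloor\frac{n}{2}\rfloor-1})))<n-2$. Hence
\[
\rho(A(\Sigma))=\max\{\lambda_1(A(\Sigma)),-\lambda_n(A(\Sigma))\}\le\max\{\lambda_1(A((K_n,T^-_{1,n-3}))),\,n-2\}=\lambda_1(A((K_n,T^-_{1,n-3}))).
\]
Applying this to $\Sigma=(K_n,T^-_{1,n-3})$ and using $-\lambda_n(A((K_n,T^-_{1,n-3})))\le-\lambda_n(A((K_n,\mathcal{T}^-_{\lceil\frac{n}{2}\rceil-1,\lfloor\frac{n}{2}\rfloor-1})))<n-2<\lambda_1(A((K_n,T^-_{1,n-3})))$ shows that $\rho(A((K_n,T^-_{1,n-3})))=\lambda_1(A((K_n,T^-_{1,n-3})))$, so the bound is attained. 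If $T\not\cong T_{1,n-3}$, then the first inequality is strict while $-\lambda_n(A(\Sigma))<n-2<\lambda_1(A((K_n,T^-_{1,n-3})))$, so $\rho(A(\Sigma))<\rho(A((K_n,T^-_{1,n-3})))$; this gives part~(II). (One also checks that $(K_n,T^-_{1,n-3})$ is unbalanced for $n\ge6$: in the unique proper $2$-colouring of $T_{1,n-3}$ there is a positive (non-tree) edge joining the two colour classes.)

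For $n=5$ the threshold trick is unavailable, so I would argue by inspection: the only trees on five vertices are $P_5$, $K_{1,4}$, and $T_{1,2}$, and $(K_5,K^-_{1,4})$ is balanced (negating the four edges at the star centre, a switching that preserves the spectrum, yields the all-positive $K_5$), hence excluded. For the other two, using $A((K_5,T^-))=J_5-I_5-2A(T)$ I would compute the characteristic polynomials and verify $\lambda_1(A((K_5,P^-_5)))\le3$, $-\lambda_n(A((K_5,P^-_5)))=3$, $\lambda_1(A((K_5,T^-_{1,2})))=3$ and $-\lambda_n(A((K_5,T^-_{1,2})))\le3$; thus $\rho(A((K_5,P^-_5)))=\rho(A((K_5,T^-_{1,2})))=3$, and since there is no other candidate the maximiser is $T\cong P_5$ or $T\cong T_{1,2}$, which is part~(I).

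The main obstacle is the pair of estimates in the first paragraph, i.e.\ pinning down that $n-2$ really lies strictly between $-\lambda_n$ of the $\lambda_n$-minimising configuration and $\lambda_1$ of the $\lambda_1$-maximising configuration for every $n\ge6$; the remainder of the proof is just bookkeeping with $\rho=\max\{\lambda_1,-\lambda_n\}$ plus the two cited theorems, and the $n=5$ case is a short finite check.
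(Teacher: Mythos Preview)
Your proposal is correct and follows essentially the same route as the paper: deduce the $n\ge6$ case from Theorems~\ref{lm2.02} and~\ref{thm5.3} together with the separating inequalities $\lambda_1(A((K_n,T^-_{1,n-3})))>n-2>-\lambda_n(A((K_n,\mathcal{T}^-_{\lceil n/2\rceil-1,\lfloor n/2\rfloor-1})))$, and settle $n=5$ by direct enumeration. The only cosmetic difference is in how the two scalar estimates are obtained: the paper reads them off from explicit characteristic polynomials (Remark~\ref{rek5.11}, which evaluates the cubic factor of $P_{D^{\pm}}$ at $n-2$, and Lemma~\ref{lm5.2}, which evaluates the relevant factor at $2-n$ and checks monotonicity), whereas you propose to extract them from the equitable-partition quotient matrices or a Rayleigh-quotient argument; these are the same computation in different clothing.
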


\begin{figure}
\centering
\begin{tikzpicture}[x=1.00mm, y=1.00mm, inner xsep=0pt, inner ysep=0pt, outer xsep=0pt, outer ysep=0pt,,scale=0.65]
\path[line width=0mm] (26.35,69.67) rectangle +(171.59,51.44);
\definecolor{L}{rgb}{0,0,0}
\definecolor{F}{rgb}{0,0,0}
\path[line width=0.30mm, draw=L, fill=F] (29.35,99.24) circle (0.70mm);
\path[line width=0.30mm, draw=L, fill=F] (44.35,99.24) circle (0.70mm);
\path[line width=0.30mm, draw=L, fill=F] (59.60,99.24) circle (0.70mm);
\path[line width=0.30mm, draw=L, fill=F] (70.52,110.36) circle (0.70mm);
\path[line width=0.30mm, draw=L, fill=F] (70.78,87.30) circle (0.70mm);
\path[line width=0.30mm, draw=L] (29.35,99.24) -- (45.11,99.24);
\path[line width=0.30mm, draw=L] (44.09,99.24) -- (60.87,99.24);
\path[line width=0.30mm, draw=L] (59.34,98.99) -- (70.78,87.55);
\path[line width=0.30mm, draw=L] (59.85,98.99) -- (69.76,109.66);
\path[line width=0.30mm, draw=L, fill=F] (70.27,103.05) circle (0.40mm);
\path[line width=0.30mm, draw=L, fill=F] (70.27,98.48) circle (0.40mm);
\path[line width=0.30mm, draw=L, fill=F] (70.27,92.82) circle (0.40mm);
\draw(42.82,69.75) node[anchor=base west]{\fontsize{11.38}{13.66}\selectfont $T_{1,n-3}$};
\path[line width=0.30mm, draw=L, fill=F] (90.09,110.36) circle (0.70mm);
\path[line width=0.30mm, draw=L, fill=F] (90.60,86.98) circle (0.70mm);
\path[line width=0.30mm, draw=L, fill=F] (100.26,99.18) circle (0.70mm);
\path[line width=0.30mm, draw=L, fill=F] (110.17,99.18) circle (0.70mm);
\path[line width=0.30mm, draw=L, fill=F] (120.08,99.18) circle (0.70mm);
\path[line width=0.30mm, draw=L, fill=F] (129.74,99.18) circle (0.70mm);
\path[line width=0.30mm, draw=L, fill=F] (139.90,110.11) circle (0.70mm);
\path[line width=0.30mm, draw=L, fill=F] (140.16,86.98) circle (0.70mm);
\path[line width=0.30mm, draw=L] (90.09,110.36) -- (100.77,99.18);
\path[line width=0.30mm, draw=L] (90.60,86.73) -- (100.51,99.18);
\path[line width=0.30mm, draw=L] (100.26,99.18) -- (110.17,99.18);
\path[line width=0.30mm, draw=L] (109.92,99.18) -- (130.25,99.18);
\path[line width=0.30mm, draw=L] (129.74,99.18) -- (139.65,110.11);
\path[line width=0.30mm, draw=L] (130.25,99.18) -- (140.16,86.98);
\path[line width=0.30mm, draw=L, fill=F] (90.09,102.99) circle (0.40mm);
\path[line width=0.30mm, draw=L, fill=F] (90.09,98.42) circle (0.40mm);
\path[line width=0.30mm, draw=L, fill=F] (90.09,92.82) circle (0.40mm);
\path[line width=0.30mm, draw=L, fill=F] (139.90,102.99) circle (0.40mm);
\path[line width=0.30mm, draw=L, fill=F] (139.90,98.42) circle (0.40mm);
\path[line width=0.30mm, draw=L, fill=F] (139.90,92.82) circle (0.40mm);
\path[line width=0.30mm, draw=L, fill=F] (159.98,86.98) circle (0.70mm);
\path[line width=0.30mm, draw=L, fill=F] (170.15,86.98) circle (0.70mm);
\path[line width=0.30mm, draw=L, fill=F] (180.06,86.98) circle (0.70mm);
\path[line width=0.30mm, draw=L, fill=F] (189.97,86.98) circle (0.70mm);
\path[line width=0.30mm, draw=L, fill=F] (160.24,110.11) circle (0.70mm);
\path[line width=0.30mm, draw=L, fill=F] (189.72,110.36) circle (0.70mm);
\path[line width=0.30mm, draw=L] (160.24,86.98) -- (190.22,86.98);
\path[line width=0.30mm, draw=L] (160.49,110.11) -- (170.15,86.98);
\path[line width=0.30mm, draw=L] (160.49,109.85) -- (180.06,86.98);
\path[line width=0.30mm, draw=L] (189.97,110.36) -- (170.15,86.98);
\path[line width=0.30mm, draw=L] (189.97,110.11) -- (179.80,86.98);
\path[line width=0.30mm, draw=L, fill=F] (169.13,110.11) circle (0.40mm);
\path[line width=0.30mm, draw=L, fill=F] (175.74,110.11) circle (0.40mm);
\path[line width=0.30mm, draw=L, fill=F] (182.35,110.11) circle (0.40mm);
\draw(109.15,69.75) node[anchor=base west]{\fontsize{11.38}{13.66}\selectfont $\mathcal{T}_{a-1,b-1}$};
\draw(168.37,69.75) node[anchor=base west]{\fontsize{11.38}{13.66}\selectfont $S^+_{2,n-2}$};
\draw(157.44,80.88) node[anchor=base west]{\fontsize{11.38}{13.66}\selectfont $v_1$};
\draw(168.11,80.88) node[anchor=base west]{\fontsize{11.38}{13.66}\selectfont $v_2$};
\draw(178.79,80.88) node[anchor=base west]{\fontsize{11.38}{13.66}\selectfont $v_3$};
\draw(188.44,80.88) node[anchor=base west]{\fontsize{11.38}{13.66}\selectfont $v_4$};
\draw(155.66,115.19) node[anchor=base west]{\fontsize{11.38}{13.66}\selectfont $v_5$};
\draw(186.41,115.19) node[anchor=base west]{\fontsize{11.38}{13.66}\selectfont $v_n$};
\end{tikzpicture}%
\caption{The graphs $T_{1,n-3}$, $\mathcal{T}_{a-1,b-1}$ and $S^+_{2,n-2}$.}\label{Fig. 1}
\end{figure}
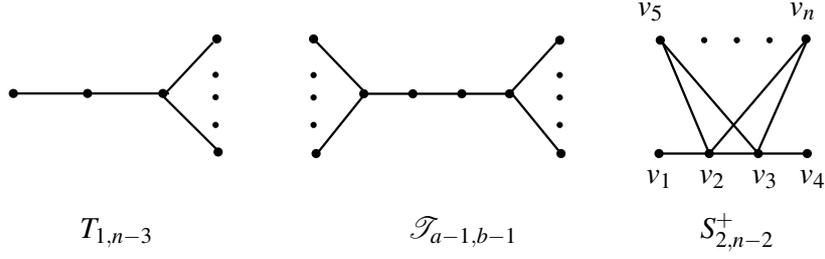

Another interesting graph matrix on signed graph is distance matrix.
The definition is given by Hameed, Shijin, Soorya, Germina and Zaslavsky \cite{S.K. Hameed}.
The sign of a path $P$ in $\Sigma$ is defined as $\sigma(P)=\Pi_{e\in E(P)}\sigma(e)$. Let $P_{(u,v)}$ denote a shortest path between vertices $u$ and $v$ and $\mathcal{P}_{(u,v)}=\{P_{(u,v)}\mid u, v~\mbox{are two given vertices of}~G\}$. Let $\sigma_{max}(u,v)=\max\{\sigma(P_{(u,v)})\mid P_{(u,v)}\in \mathcal{P}_{(u,v)}\}$ and $\sigma_{min}(u,v)=\min\{\sigma(P_{(u,v)})\mid P_{(u,v)}\in \mathcal{P}_{(u,v)}\}$. The distance between vertices $u$ and $v$, i.e., the length of a shortest path between $u$ and $v$, is denoted by $d_{uv}$. Let $d_{max}(u,v)=\sigma_{max}(u,v)d_{uv}$ and $d_{min}(u,v)=\sigma_{min}(u,v)d_{uv}$.  Two types of distance matrices in a signed graph are defined as $D^{max}(\Sigma)=(d_{max}(u,v))_{n\times n}$ and $D^{min}(\Sigma)=(d_{min}(u,v))_{n\times n}$. Two vertices $u$ and $v$ in a signed graph $\Sigma$ are said to be distance-compatible
(briefly, compatible) if $d_{max}(u,v)=d_{min}(u,v)$. And $\Sigma$ is said to be (distance-)compatible if every two vertices are compatible. Then
$D^{max}(\Sigma)=D^{min}(\Sigma)=D^{\pm}(\Sigma)$.

If we consider about signed complete graphs, then $A(\Sigma)=D^{\pm}(\Sigma)$.
The diameter of $\Sigma=(G,\sigma)$ is the diameter of its
underlying graph $G$, namely, the maximum distance between any two vertices in $G$, denoted by $d$ or $d(G)$. Our second motivation of the paper is from the following problem: what are the upper bound on eigenvalues of a signed distance matrix with diameter $d\geq2$?
In this paper, we give an upper bound on the least eigenvalues of the signed distance matrices.
Let $K_{2,n-4}=2K_1\vee(n-4)K_1$ and $2K_1=\{v_2,v_3\}$ and let $S^+_{2,n-2}$ (see Figure 1) denote the graph obtained from $K_{2,n-4}$ by adding an edge between $v_2$ and $v_3$ and attaching two pendent vertices to each vertex of $v_2$ and $v_3$, respectively.
Now we have the following result.

\vspace*{2mm}
\begin{thm}\label{lem1.11}
Let $\Sigma=(G,\sigma)$ be a connected signed graph with diameter $d(G)=d$.
Then we have the following statements.
 \begin{enumerate}
   \item[(I)] If $d=2$, then $\max\{\lambda_n(D^{max}),~\lambda_n(D^{min})\}\leq-2$ with equality holding if and only if $\Sigma$ is a balanced signed complete $k$-partite graph with $2\leq k\leq n-1$;
   \item[(II)] If $d=3$, then $\max\{\lambda_n(D^{max}),~\lambda_n(D^{min})\}\leq-2-\sqrt{2}$ with equality holding if and only if $\Sigma$ is a signed graph with underlying graph $S^+_{2,n-2}$ such that all triangles with the same sign;
   \item[(III)] If $d\geq4$, then $\max\{\lambda_n(D^{max}),\lambda_n(D^{min})\}\leq-\frac{4}{3}(d-1)$.
 \end{enumerate}
\end{thm}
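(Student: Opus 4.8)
The plan is to obtain the three upper bounds from Cauchy interlacing applied to a small principal submatrix of $D^{max}(\Sigma)$ (and, identically, of $D^{min}(\Sigma)$) sitting on the vertices of a shortest path $u_0u_1\cdots u_d$ realizing the diameter. Since every sub-path of a geodesic is geodesic, the distance in $G$ between two vertices $u_i,u_j$ of this path is exactly $|i-j|$, so the chosen submatrix has the same off-diagonal magnitudes as a signed distance matrix of $P_{d+1}$. The remaining freedom is only in the signs, which I control through two remarks: (a) the product of the three off-diagonal entries around any triangle of the submatrix is a switching invariant lying in $\{\pm1\}$, so the submatrix is determined up to similarity by a handful of $\pm1$ parameters; and (b) if some geodesic joining two vertices is positive then $\sigma_{max}=+1$ between them, and dually $\sigma_{min}=-1$ along a negative geodesic, so as soon as the relevant geodesics are unique the ``long'' signs along the path are forced.

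For (I), pick $u,v$ with $d_{uv}=2$ (such a pair exists since $d=2$). In either $D^{max}$ or $D^{min}$ the $2\times2$ submatrix on $\{u,v\}$ has $0$ on the diagonal and an entry $\pm2$ off it, hence least eigenvalue $-2$, so interlacing gives $\lambda_n(D^{max}),\lambda_n(D^{min})\le-2$. For the equality case I would use that $M:=D^{max}+2I$ is positive semidefinite (the $D^{min}$ case being identical): any pair with $M_{uv}=\pm2$ forces $M(e_u\mp e_v)=0$, hence $d_{max}(u,w)=\mp d_{max}(v,w)$ for every other $w$; comparing magnitudes gives $d_{uw}=d_{vw}$, and sweeping this over all such pairs shows that ``being at distance $2$'' is an equivalence relation, i.e.\ $G$ is complete multipartite with some part of size $\ge2$ (so $2\le k\le n-1$), while the sign bookkeeping forces every cycle positive, i.e.\ $\Sigma$ balanced. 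Conversely, after switching to the all-positive signature one has $D^{max}=D^{min}=D(G)=J-I+B$ with $B$ the adjacency matrix of $\overline G=\bigsqcup K_{n_i}$; since $D(G)+2I=J+(B+I)$ is a sum of two positive semidefinite matrices, $\lambda_n\ge-2$, and a twin vector $e_u-e_v$ inside a part of size $\ge2$ attains $-2$.

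For (III), use the $3\times3$ submatrix $B$ of $D^{max}$ (resp.\ $D^{min}$) on $\{u_0,u_1,u_d\}$, with off-diagonal magnitudes $1,d-1,d$ and triangle sign $\varepsilon\in\{\pm1\}$. If $\varepsilon=+1$, then $B$ is switching-equivalent to $B_+:=\bigl(\begin{smallmatrix}0&1&d\\1&0&d-1\\d&d-1&0\end{smallmatrix}\bigr)$, whose characteristic polynomial $\lambda^3-(2d^2-2d+2)\lambda-2d(d-1)$ takes the positive value $\tfrac{(d-1)(8d^2+2d+8)}{27}$ at $\lambda=-\tfrac43(d-1)$; since $B_+$ has zero trace and positive determinant it has exactly one positive eigenvalue, so $-\tfrac43(d-1)$ lies strictly between its two negative eigenvalues and $\lambda_{\min}(B_+)<-\tfrac43(d-1)$. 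If $\varepsilon=-1$, then $\mathrm{spec}(B)=-\mathrm{spec}(B_+)$, so $\lambda_{\min}(B)=-\lambda_{\max}(B_+)\le-\tfrac43 d$ because the all-ones vector has Rayleigh quotient $\tfrac{4d}{3}$ on $B_+$. Either way $\lambda_{\min}(B)\le-\tfrac43(d-1)$, and interlacing finishes (III), with strict inequality (consistent with the absence of an equality case there).

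For (II) the analogous argument uses the $4\times4$ submatrix $B$ on $\{u_0,u_1,u_2,u_3\}$, off-diagonal magnitudes $1,1,1,2,2,3$; its similarity class is pinned down by the four triangle signs, which satisfy $\tau_{012}\tau_{013}\tau_{023}\tau_{123}=1$. Using remark (b) above and the option of replacing an interior vertex $u_i$ of the geodesic by a common neighbour carrying a sign-extremal geodesic, one reduces to the case in which all four signs equal $+1$ for the matrix under consideration; there $B$ is switching-equivalent to the path distance matrix $D(P_4)$, so $\lambda_{\min}(B)=\lambda_{\min}(D(P_4))=-2-\sqrt2$, attained by the antisymmetric vector $(1,\sqrt2-1,1-\sqrt2,-1)$, and interlacing yields the bound (the other sign patterns giving values $\le-2-\sqrt2$ as well). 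The hard part is the equality discussion: assuming $\lambda_n(D^{max})=-2-\sqrt2$, the matrix $D^{max}+(2+\sqrt2)I$ is positive semidefinite with a kernel vector $x$ restricting to the eigenvector above on $\{u_0,u_1,u_2,u_3\}$, and expanding $\bigl(D^{max}+(2+\sqrt2)I\bigr)x=0$ forces tight relations among the distances from the remaining vertices to $u_0,\dots,u_3$ and caps the sizes of the distance classes, from which I would identify the underlying graph as $S^+_{2,n-2}$ and reduce the signature to ``all triangles with the same sign''. Specialising (II) to the all-positive signature then recovers the graph minimizing the least distance eigenvalue among diameter-$3$ graphs, which is precisely Lin's theorem \cite{H.Q. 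Lin-1} settling the conjecture of Aouchiche and Hansen \cite{M. Aouchiche}. I expect the $\pm1$ case analysis for the $4\times4$ submatrix, together with this equality characterization, to be where the real work lies; the rest is interlacing plus bounded computation.
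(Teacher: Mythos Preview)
Your plan---interlace with a small principal submatrix supported on a diameter geodesic---is exactly the paper's, and for (III) your case split on the triangle sign is in fact cleaner than the paper's Rayleigh-quotient computation (which tacitly assumes a sign relation $\sigma_{1,d+1}=\sigma_{1,d}\sigma_{d,d+1}$ along the chosen path; your analysis shows the assumption is dispensable, since either value of the triangle sign already gives $\lambda_{\min}\le -\tfrac43(d-1)$).

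There are, however, two genuine gaps. First, in (I) the step ``the sign bookkeeping forces every cycle positive'' is not merely unjustified but false: take $K_{2,1,1}$ with every edge positive except the one joining the two singleton parts. This signed graph has diameter $2$, is unbalanced and distance-compatible, and $D^{\pm}$ has spectrum $\{3,1,-2,-2\}$, so $\lambda_n=-2$ while $\Sigma$ is not balanced. More generally, the unbalanced complete tripartite graphs of Lemma~\ref{lem1.3} all satisfy $\lambda_n(D^{\pm})=-2$ (the two roots of $h(\lambda)$ there are both strictly larger than $-2$). Your PSD/kernel argument correctly forces $G$ to be complete multipartite, but cannot by itself rule these examples out; the paper deduces (I) from Theorem~\ref{thm12}(III), which carries an additional multiplicity-$(n{-}k)$ hypothesis that excludes precisely this tripartite family. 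Second, in (II) your reduction ``replace an interior vertex by a common neighbour carrying a sign-extremal geodesic'' need not terminate: rerouting to repair the triangle $(u_0,u_1,u_2)$ can create a new exceptional triangle $(u,u_2,u_3)$, and the process may cycle. The paper instead keeps the original four vertices, adjoins the fifth vertex witnessing the alternate geodesic, and exhausts all resulting $5\times5$ sign patterns; your proposal does not yet close this step. Likewise, your PSD/kernel outline for the equality case of (II) is only a sketch, whereas the paper's argument is a lengthy forbidden-subgraph analysis (Claims~\ref{claim1.1}--\ref{claim1.6} on the graphs $H_1$--$H_8$), and this is where essentially all the work sits.
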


It is worth to mention that Theorem \ref{lem1.11} implies that $\lambda_n(D)\leq -d$ with equality holding if and only if $G$ is a complete multipartite graph which was conjectured by Aouchiche and Hansen \cite{M. Aouchiche} and confirmed by Lin \cite{H.Q. Lin-1}.

\section{Proof of Theorem \ref{lm2.02}}\label{sec2.0}
\begin{lemma}\cite{S. Akbari}\label{lm5.3}
Let $\Sigma=(K_n, T^-_{s,t})$  be a signed complete graph and $n\geq s+t+2$. Then
\begin{align*}
P_{D^{\pm}(\Sigma)}(\lambda)&= (\lambda+1)^{n-5}({\lambda}^{5}+ \left( 5-n \right) {\lambda}^{4}+\left( 10-4\,n
 \right) {\lambda}^{3}\\
\ &+ \left( 4ku+8\,st-6\,n+10 \right) {\lambda}^{2}+
 \left(8ku+ 16\,st-4\,n+5 \right) \lambda+4ku+8st-16stu+1-n),
\end{align*}
where $k=s+t+1$ and $u=n-s-t-2$.
\end{lemma}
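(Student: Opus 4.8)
The plan is to compute $P_{A(\Sigma)}(\lambda)$ explicitly by an equitable-partition reduction. First I would note that since $\Sigma=(K_n,T^-_{s,t})$ has diameter $1$, the unique shortest path between any two vertices is the joining edge, so $\Sigma$ is compatible and $D^{\pm}(\Sigma)=A(\Sigma)$; it therefore suffices to find the characteristic polynomial of the adjacency matrix. Writing $J_n$ for the all-ones matrix and $A(T)$ for the ordinary (unsigned) adjacency matrix of the double star $T_{s,t}$ padded with $u=n-s-t-2$ isolated vertices, one has $A(\Sigma)=J_n-I_n-2A(T)$, which simply records that an off-diagonal entry is $+1$ except on the $k=s+t+1$ negative (tree) edges, where it is $-1$.

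Next I would set up the equitable partition. Let $x,y$ be the two centres of $T_{s,t}$, let $L_x$ (size $s$) and $L_y$ (size $t$) be the two leaf sets, and let $W$ (size $u$) be the rest; the ordered partition $\{x\},\{y\},L_x,L_y,W$ is equitable for $A(\Sigma)$, because in $K_n$ a leaf of $L_x$ joins $x$ negatively and everything else positively, a leaf of $L_y$ joins $y$ negatively and everything else positively, and a vertex of $W$ joins everything positively. Standard divisor-matrix theory then yields
\[
P_{A(\Sigma)}(\lambda)=(\lambda+1)^{\,n-5}\det\!\big(\lambda I_5-B\big),\qquad
B=\begin{pmatrix}
0&-1&-s&t&u\\
-1&0&s&-t&u\\
-1&1&s-1&t&u\\
1&-1&s&t-1&u\\
1&1&s&t&u-1
\end{pmatrix},
\]
where the exponent $n-5=(s-1)+(t-1)+(u-1)$ counts the vectors supported on one cell and summing to zero there: on such a vector both $J_n$ and $A(T)$ vanish (a leaf sees only its centre, which is $0$; a vertex of $W$ sees nothing), so $A(\Sigma)$ acts as $-I$ and each contributes the eigenvalue $-1$. (When $u=0$ this degenerates to four cells, but the quintic below then has $-1$ as a root, so the formula stays uniform.)

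For the $5\times5$ determinant I would exploit the decomposition once more. On the span of the cell-indicators $e_x,e_y,\mathbf{1}_{L_x},\mathbf{1}_{L_y},\mathbf{1}_W$ we have $B=\mathbf{1}_5v^{\top}-I_5-2C$, with $v=(1,1,s,t,u)^{\top}$ and $C$ the $5\times5$ matrix of $A(T)$ in this basis; expanding $\det(xI-C)$ along its last row (which is $(0,0,0,0,x)$) leaves $x$ times the divisor polynomial of the double star, so $\chi_C(x)=x\big(x^4-kx^2+st\big)$ and hence $\det\!\big((\lambda+1)I_5+2C\big)=(\lambda+1)\big[(\lambda+1)^4-4k(\lambda+1)^2+16st\big]$. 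Since $\lambda I_5-B=(\lambda+1)I_5+2C-\mathbf{1}_5v^{\top}$, the matrix-determinant lemma reduces the task to solving $\big((\lambda+1)I_5+2C\big)z=\mathbf{1}_5$ --- its last three scalar equations give $z_3,z_4,z_5$ in terms of $z_1,z_2$, leaving a $2\times2$ system --- and after substituting $k+1+u=n$ the whole product collapses to the compact form
\[
\det\!\big(\lambda I_5-B\big)=(\lambda+1)^5-n(\lambda+1)^4+\big(8st+4ku\big)(\lambda+1)^2-16stu .
\]
Expanding this in powers of $\lambda$ (the $(\lambda+1)^3$- and $(\lambda+1)^1$-terms are absent) and collecting gives exactly the quintic in the statement. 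Alternatively one could bypass the lemma and expand $\det(\lambda I_5-B)$ directly after a couple of row reductions using the near-identical rows of $B$.

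I expect the only real obstacle to be bookkeeping: pinning down every sign of $B$ correctly (equivalently, deciding which edges of $K_n$ are negative) and then carrying the polynomial cleanly through the substitutions $k=s+t+1$, $u=n-s-t-2$. Two inexpensive sanity checks guard against slips: $\operatorname{tr}A(\Sigma)=0$ forces the $\lambda^4$-coefficient to be $5-n$, and setting $u=0$ must make $-1$ a root of the quintic factor (then $T_{s,t}$ is spanning and the $(-1)$-eigenspace has dimension $n-4$).
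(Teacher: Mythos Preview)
The paper does not prove this lemma: it is quoted verbatim from Akbari, Dalvandi, Heydari and Maghasedi \cite{S. Akbari} and used as a black box (see the citation in the lemma header), so there is no in-paper argument to compare against. Your proposal therefore supplies a proof where the paper offers none.

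Your argument is sound. The equitable five-block partition $\{x\},\{y\},L_x,L_y,W$ is correct, the divisor matrix $B$ is computed correctly, and the identification of the $(n-5)$-dimensional $(-1)$-eigenspace on zero-sum vectors within each block is valid. The decomposition $B=\mathbf{1}_5v^{\top}-I_5-2C$ with $C$ the divisor of the padded double star, together with the matrix-determinant lemma, leads cleanly to
\[
\det(\lambda I_5-B)=\mu^5-n\mu^4+(4ku+8st)\mu^2-16stu,\qquad \mu=\lambda+1,
\]
and expanding this recovers exactly the quintic in the statement. One minor sharpening of your parenthetical remark: when $u=0$ the quintic in fact has $\mu=0$ as a \emph{double} root, which is consistent with the $4\times4$ divisor of the genuine four-cell partition itself having $-1$ as an eigenvalue (its first two rows of $-I_4-B'$ are negatives of one another); either way the formula remains uniform.
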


\begin{remark}\label{rek5.11}
Let $n=s+t+2$ and $g_{s,t}(\lambda)={\lambda}^{5}+ \left( 5-n \right) {\lambda}^{4}+\left( 10-4\,n
 \right) {\lambda}^{3}+ \left( 8\,st-6\,n+10 \right) {\lambda}^{2}+
 \left(16\,st-4\,n+5 \right) \lambda+8st+1-n$, where $1\leq s \leq t$ and $1\leq s\leq \lfloor \frac{n-2}{2}\rfloor$. Note that $g_{s-1,t+1}(\lambda)-g_{s,t}(\lambda)=-8(\lambda+1)^2(n-2s-1)<0$ for $\lambda\neq-1$, thus $\lambda_1(A((K_n, T^-_{s,t})))<\lambda_1(A((K_n, T^-_{s-1,t+1})))<\lambda_1(A((K_n, T^-_{1,n-3})))$ for $s\geq2$.

By Lemma \ref{lm5.3}, we can get the characteristic polynomial of $\Sigma=(K_n, T^-_{1,n-3})$ is
$$P_{D^{\pm}(\Sigma)}(\lambda)= (\lambda+1)^{n-3}({\lambda}^{3}+ \left( 3-n \right) {\lambda}^{2}+\left( 3-2\,n
 \right) {\lambda}+7n-23).$$
Let $g(\lambda)={\lambda}^{3}+ \left( 3-n \right) {\lambda}^{2}+\left( 3-2\,n
 \right) {\lambda}+7n-23$. Note that $g(n-2)=-(n-5)^2<0$, thus $\lambda_1(A((K_n, T^-_{1,n-3})))>n-2$ for $n\geq6$.
\end{remark}

\begin{lemma}\cite{T. Koledin} \label{lm2.01} Let $r,s,t$ and $u$ be distinct vertices of a signed graph $\Sigma$, $X=(x_1,x_2,\ldots,x_n)^T$ be an eigenvector corresponding to $\lambda_1(A(\Sigma))$. Then
\begin{enumerate}
  \item [(i)] let $\Sigma'$ be obtained from $\Sigma$ by reversing the sign
of the positive edge $rs$ and the negative edge $rt$. If
$$\left\{
    \begin{array}{ll}
       x_r\geq0,x_t\geq x_s~~or\\
      x_r\leq0,x_t\leq x_s,
    \end{array}
  \right.$$
then $\lambda_1(A(\Sigma))\leq\lambda_1(A(\Sigma'))$. If at least one inequality for the entries of $X$ is strict, then $\lambda_1(A(\Sigma))<\lambda_1(A(\Sigma'))$;
  \item [(ii)] let $\Sigma'$ be obtained from $\Sigma$ by reversing the sign of
the positive edge $rs$ and the negative edge $tu$. If $x_rx_s\leq x_tx_u$, then $\lambda_1(A(\Sigma))\leq\lambda_1(A(\Sigma'))$. If at least one of the entries $x_r,x_s,x_t,x_u$ is distinct from zero, then $\lambda_1(A(\Sigma))<\lambda_1(A(\Sigma'))$.
\end{enumerate}
\end{lemma}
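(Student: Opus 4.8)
The plan is to use the Rayleigh quotient characterization of the largest eigenvalue of a real symmetric matrix. Since $A(\Sigma)$ is real symmetric and $X$ is an eigenvector for $\lambda_1(A(\Sigma))$, we have $\lambda_1(A(\Sigma)) = X^TA(\Sigma)X/(X^TX)$, while for the perturbed signed graph $\Sigma'$ the very same vector $X$ is an admissible test vector, so $\lambda_1(A(\Sigma')) \geq X^TA(\Sigma')X/(X^TX)$. Thus the whole argument reduces to evaluating the quadratic-form difference $X^T(A(\Sigma')-A(\Sigma))X$ and checking its sign under the stated hypotheses.

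First I would record how the two sign reversals act on the matrix entries. In part (i), reversing the positive edge $rs$ changes $a^{\sigma}_{rs}=a^{\sigma}_{sr}$ from $+1$ to $-1$, and reversing the negative edge $rt$ changes $a^{\sigma}_{rt}=a^{\sigma}_{tr}$ from $-1$ to $+1$, with all other entries unchanged. Since a symmetric change of $\delta$ in an off-diagonal pair $(p,q),(q,p)$ contributes $2\delta x_px_q$ to the quadratic form, this yields
$$X^T(A(\Sigma')-A(\Sigma))X = 4x_r(x_t-x_s).$$
Under either branch of the hypothesis ($x_r\geq0$, $x_t\geq x_s$ or $x_r\leq0$, $x_t\leq x_s$) the factors $x_r$ and $x_t-x_s$ carry the same sign, so the difference is nonnegative and $\lambda_1(A(\Sigma))\leq\lambda_1(A(\Sigma'))$ follows. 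For part (ii), the same bookkeeping with $tu$ in place of $rt$ gives
$$X^T(A(\Sigma')-A(\Sigma))X = 4(x_tx_u-x_rx_s),$$
which is nonnegative precisely under the hypothesis $x_rx_s\leq x_tx_u$, again delivering the weak inequality.

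The step I expect to require the most care is the strict inequality, and I would handle it by contradiction. Suppose $\lambda_1(A(\Sigma'))=\lambda_1(A(\Sigma))$. Then the chain $\lambda_1(A(\Sigma'))\geq X^TA(\Sigma')X/(X^TX)\geq X^TA(\Sigma)X/(X^TX)=\lambda_1(A(\Sigma))$ collapses to equalities, so $X$ attains the maximum Rayleigh quotient for $A(\Sigma')$ and is therefore a top eigenvector of $A(\Sigma')$ for the same eigenvalue. Consequently $X$ lies in the kernel of $A(\Sigma')-A(\Sigma)$, and reading off the coordinates of $(A(\Sigma')-A(\Sigma))X$ pins $X$ down exactly: in case (i) the $r$-th coordinate forces $x_t=x_s$ and the $s$-th (equivalently $t$-th) forces $x_r=0$, so neither hypothesis inequality can be strict; in case (ii) the four relevant coordinates force $x_r=x_s=x_t=x_u=0$. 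Each outcome contradicts the corresponding strictness assumption, so the inequality is in fact strict. This kernel computation is the only delicate point of the argument, and it is exactly what upgrades the weak Rayleigh bound into the strict conclusion.
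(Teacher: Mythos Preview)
The paper does not prove this lemma; it is quoted from Koledin and Stani\'{c} with the citation \cite{T. Koledin}. Your argument is correct and is precisely the template the paper itself uses when it proves the dual statement for $\lambda_n$ (Lemma~\ref{lm5.11}): compute the quadratic-form difference $X^T(A(\Sigma')-A(\Sigma))X$, invoke Rayleigh to get the weak inequality, and for strictness observe that equality would force $X$ to be an eigenvector of both matrices, hence of their difference, which pins down the relevant coordinates. The paper phrases the last step as ``the eigenvalue equation cannot hold for $s$ (resp.\ $r$) in both signed graphs'', while you phrase it as $X\in\ker(A(\Sigma')-A(\Sigma))$; these are the same observation.
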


Let $N_G(v)$ or $N(v)$ denote the neighbor set of vertex $v$ of $G$ and $d_G(v)=\mid N_G(v)\mid$ be the degree of the vertex $v$ in $G$. For any $U\subset V(G)$, let $N_G(U)$ denote the neighbor set of vertices $v\in U$ of $G$.

\renewcommand\proofname{\bf{Proof of Theorem \ref{lm2.02}}.}
\begin{proof}
Since $\Sigma$ is an unbalanced signed complete graphs, $T\ncong K_{1,n-1}$, $d(T)>2$ and $\lambda_1(A(\Sigma))<n-1$. Let $X=(x_1,x_2,\ldots,x_n)^T$ be an unit eigenvector corresponding to $\lambda_1(A(\Sigma))$.
By arranging the vertices of $\Sigma$ appropriately, we can assume that $V(\Sigma)=\{v_1,v_2,\ldots,v_n\}$ such that $x_1\leq x_2\leq \cdots\leq x_n$.

\noindent{\bf{$\underline{\mbox{Case 1. }}$}} $X$ is a nonnegative eigenvector. Obviously, $x_n>0$ since $X\neq\mathbf{0}$.
\begin{claim}\label{claim21}
$d_T(v_1)\geq2$.
\end{claim}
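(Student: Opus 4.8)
The plan is to argue by contradiction, assuming $d_T(v_1)=1$. Then $v_1$ is a pendant vertex of $T$; let $v_j$ be its unique $T$-neighbour, so $v_1v_j$ is the only negative edge at $v_1$ and all the other $n-2$ edges at $v_1$ are positive. First I would write down the eigenequation of $A(\Sigma)$ at $v_1$: with $s=\sum_{i=1}^n x_i$ it reads $\lambda_1x_1=-x_j+\sum_{i\ne 1,j}x_i=s-x_1-2x_j$, that is
$$(\lambda_1+1)x_1=s-2x_j.\qquad(\star)$$
Next I would do the same at $v_j$. Writing $N'=\sum_{v_i\in N_T(v_j)\setminus\{v_1\}}x_i\ge 0$ for the total weight of the negative neighbours of $v_j$ other than $v_1$ (taking care that $v_1$ contributes $-x_1$ and is not also counted among the positive neighbours of $v_j$), the eigenequation at $v_j$ becomes $\lambda_1x_j=s-x_j-2x_1-2N'$, i.e.
$$(\lambda_1+1)x_j=s-2x_1-2N'.\qquad(\star\star)$$

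Subtracting $(\star)$ from $(\star\star)$ and simplifying gives the identity $(\lambda_1-1)(x_j-x_1)=-2N'\le 0$. Since $\Sigma$ maximises $\lambda_1(A(\Sigma))$, Remark~\ref{rek5.11} yields $\lambda_1(A(\Sigma))\ge\lambda_1(A((K_n,T^-_{1,n-3})))>n-2\ge 4$, so $\lambda_1-1>0$ and hence $x_j\le x_1$; as $x_1$ is the smallest entry of the nonnegative vector $X$, this forces $x_j=x_1$ and then $N'=0$. To finish I would split into two cases. If $x_1>0$, every negative neighbour of $v_j$ other than $v_1$ has weight $\ge x_1>0$, so $N'=0$ means $v_j$ has no such neighbour, i.e. $d_T(v_j)=1$; but then the spanning tree $T$ is the single edge $v_1v_j$, contradicting $n\ge 6$. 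If $x_1=0$, then $x_j=0$ as well, and $(\star)$ forces $s=0$, whence $X=\mathbf 0$, a contradiction. In either case the standing assumptions are violated, so $d_T(v_1)\ge 2$.

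I do not anticipate a serious obstacle here: the whole argument is two eigenequations and one subtraction, and in particular it does not seem necessary to invoke the edge-switching Lemma~\ref{lm2.01} to re-route the pendant edge (which would raise the nuisance of staying inside the class of unbalanced signed complete graphs with negative edges forming a spanning tree). The only points to watch are the sign bookkeeping in the eigenequation at $v_j$ and the cheap input $\lambda_1>1$, obtained from the lower bound $\lambda_1(A(\Sigma))>n-2$ implied by maximality together with Remark~\ref{rek5.11}. Since $(\star)$, $(\star\star)$ and the identity $(\lambda_1-1)(x_j-x_1)=-2N'$ use only that $v_1$ has exactly one negative neighbour, it would be natural to record them once as a small sublemma, since the same computation is likely to recur in the later claims describing the extremal tree.
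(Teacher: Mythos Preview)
Your argument is correct and genuinely different from the paper's. The paper proves Claim~\ref{claim21} via the edge-switching Lemma~\ref{lm2.01}: if $d_T(v_1)=1$ with $T$-neighbour $v_t$, then $d_T(v_t)\ge 2$; when $x_t>x_1$ one reroutes every negative edge $v_tw$ ($w\in N_T(v_t)\setminus\{v_1\}$) to $v_1w$, producing a spanning-tree signed complete graph with strictly larger index, contradicting maximality; when $x_t=x_1$ one simply swaps the labels of $v_1$ and $v_t$. You instead subtract the two eigenequations to get $(\lambda_1-1)(x_j-x_1)=-2N'$, use $\lambda_1>n-2$ to force $x_j=x_1$ and $N'=0$, and then finish with the dichotomy on $x_1$.

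What each approach buys: the paper's proof is methodologically uniform with the subsequent claims, all of which are driven by Lemma~\ref{lm2.01}, and it needs nothing quantitative about $\lambda_1$. Your proof is self-contained (no appeal to Lemma~\ref{lm2.01}), sidesteps the check that the reversed edges still form a spanning tree inside the admissible class, and in fact yields a slightly sharper conclusion: you obtain an outright contradiction rather than a relabelling in the tie case $x_j=x_1$. The only external input you use is $\lambda_1>1$, which is amply covered by Remark~\ref{rek5.11} together with maximality. Your bookkeeping in $(\star\star)$ is correct, and the dichotomy (if $x_1>0$ then $N'=0$ forces $d_T(v_j)=1$ and hence $n=2$; if $x_1=0$ then $s=0$ and $X=\mathbf 0$) is clean.
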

Otherwise, let $v_tv_1\in E(T)$, then $d_T(v_t)\geq2$ since $d(T)\geq3$. If $x_t=x_1$, then we just need to exchange the subscripts $v_t$ and $v_1$, as desired. If $x_t>x_1$, then we can construct a new signed graph $\Sigma'$ by reversing the sign of the positive edge $v_1w$ and the negative edge $v_tw$ for any $w\in N_T(v_t)\setminus\{v_1\}$ whose negative edges also form a spanning tree such that $\lambda_1(A(\Sigma))<\lambda_1(A(\Sigma'))$ by Lemma \ref{lm2.01}, a contradiction.

Let $x_p=\mbox{min}_{w\in N_T(v_1)}x_w$ and $x_q=\mbox{max}_{w\in N_T(v_1)}x_w$.
\begin{claim}\label{claim22}
$p=2.$
\end{claim}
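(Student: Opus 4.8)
The plan is to prove Claim~\ref{claim22} by contradiction, via an edge-rotation argument of the same flavour as the one used for Claim~\ref{claim21}, driven by Lemma~\ref{lm2.01}(i). Suppose $p\neq 2$. Since the ordering $x_1\le x_2\le\cdots\le x_n$ does not pin down the labels of vertices with equal entries, I would first observe that if $\min_{w\in N_T(v_1)}x_w=x_2$ we may simply relabel so that a neighbour of $v_1$ attaining that minimum is called $v_2$, which already gives $p=2$; hence we may assume $x_p>x_2$. In particular $v_2\notin N_T(v_1)$, so $v_1v_2$ is a positive edge of $\Sigma$ while $v_1v_p$ is a negative edge.

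Next I would rotate an edge of $T$ at $v_1$. Let $v_j$ be the neighbour of $v_1$ on the unique $v_1$--$v_2$ path in $T$ (so $v_j\in N_T(v_1)$, whence $x_j\ge x_p>x_2$, and $v_j\notin\{v_1,v_2\}$), and let $\Sigma'$ be obtained from $\Sigma$ by reversing the signs of the positive edge $v_1v_2$ and the negative edge $v_1v_j$. Then the negative edges of $\Sigma'$ are exactly $T'=(T\setminus\{v_1v_j\})\cup\{v_1v_2\}$, and since $v_1v_j$ lies on the unique cycle of $T+v_1v_2$, $T'$ is again a spanning tree. Applying Lemma~\ref{lm2.01}(i) with $r=v_1$, $s=v_2$, $t=v_j$, the hypotheses $x_r=x_1\ge 0$ and $x_t=x_j>x_s=x_2$ hold with the second one strict, so $\lambda_1(A(\Sigma))<\lambda_1(A(\Sigma'))$.

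To conclude I must check that $\Sigma'$ is an admissible competitor, i.e.\ an \emph{unbalanced} signed complete graph whose negative edges form a spanning tree; as for $\Sigma$, this amounts to $T'\not\cong K_{1,n-1}$. Here Claim~\ref{claim21} is essential. Passing from $T$ to $T'$ changes only the $T$-degrees of $v_1,v_j,v_2$, with $d_{T'}(v_1)=d_T(v_1)$, $d_{T'}(v_j)=d_T(v_j)-1$ and $d_{T'}(v_2)=d_T(v_2)+1$; since $T\not\cong K_{1,n-1}$, the only way $T'$ could be a star is with centre $v_2$ and $d_T(v_2)=n-2$, i.e.\ $v_2$ adjacent in $T$ to every vertex but $v_1$. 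But by Claim~\ref{claim21} the vertex $v_1$ has two distinct $T$-neighbours $w_1,w_2$, both different from $v_2$, so $v_1w_1v_2w_2v_1$ would be a $4$-cycle in the tree $T$ --- impossible. Hence $\Sigma'$ is admissible, and $\lambda_1(A(\Sigma))<\lambda_1(A(\Sigma'))$ contradicts the maximality of $\Sigma$; therefore $p=2$.

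I expect the two points needing care to be: the reduction to $x_p>x_2$ (together with the parallel remark that when $x_1>0$ even the first hypothesis of Lemma~\ref{lm2.01}(i) is already strict), which is what guarantees a \emph{strict} gain; and the verification, via Claim~\ref{claim21}, that the rotated tree $T'$ is not the star --- the latter being the only genuinely structural step and hence the main obstacle.
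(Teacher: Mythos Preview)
Your proof is correct and follows the same edge-rotation idea as the paper, invoking Lemma~\ref{lm2.01}(i) at $v_1$; the paper splits into two cases according to whether $v_1$ lies on the $v_p$--$v_2$ path in $T$, whereas your single choice of $v_j$ as the $T$-neighbour of $v_1$ on the $v_1$--$v_2$ path handles both cases at once (it coincides with $v_p$ in the first case and with the paper's $v_t$ in the second). The verification that $T'\not\cong K_{1,n-1}$, which you supply via Claim~\ref{claim21}, is a point the paper leaves implicit.
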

Otherwise, if $x_p=x_2$, then we just need to exchange the subscripts $v_p$ and $v_2$, as desired. Next, suppose that $0\leq x_2<x_p\leq x_n$ and $v_2v_1\notin E(T)$. Note that there exists an unique path $P$ between $v_p$ and $v_2$ in $T$ since $T$ is connected. If $v_1\notin V(P)$, then a new signed graph $\Sigma'$ can be obtained from $\Sigma$ by reversing the sign of the positive edge $v_1v_2$ and the negative edge $v_1v_p$ whose negative edges also form a spanning tree such that $\lambda_1(A(\Sigma))<\lambda_1(A(\Sigma'))$ by Lemma \ref{lm2.01}, a contradiction.
If $v_1\in V(P)$, it means that we can find another vertex $v_t$ such that $P=v_2\cdots v_tv_1v_p$, then $p<t$ and $0\leq x_2< x_p\leq x_t$ clearly. And so a new signed graph $\Sigma'$ can be obtained from $\Sigma$ by reversing the sign of the positive edge $v_1v_2$ and the negative edge $v_1v_t$ whose negative edges also form a spanning tree such that $\lambda_1(A(\Sigma))<\lambda_1(A(\Sigma'))$ by Lemma \ref{lm2.01}, a contradiction.

\begin{claim}\label{claim23}
$x_q>0.$
\end{claim}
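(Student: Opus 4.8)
The plan is to argue by contradiction, assuming $x_q=0$. Recall that in Case 1 the eigenvector $X$ is nonnegative and $x_q=\max_{w\in N_T(v_1)}x_w$; hence $x_q=0$ forces $x_w=0$ for every $w\in N_T(v_1)$. In particular, since Claim \ref{claim22} gives $v_2=v_p\in N_T(v_1)$ and $x_2=x_p=\min_{w\in N_T(v_1)}x_w$, we get $x_2=0$, and then the ordering $0\le x_1\le x_2=0$ yields $x_1=0$ as well.

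Next I would evaluate the eigenequation $A(\Sigma)X=\lambda_1(A(\Sigma))X$ at the coordinate $v_1$. Since $\Sigma$ is a signed complete graph whose negative edges are precisely the edges of $T$, the vertex $v_1$ is adjacent in $\Sigma$ to all other vertices, the edges to $N_T(v_1)$ being negative and all the remaining edges positive. Therefore
$$\lambda_1(A(\Sigma))\,x_1=-\sum_{w\in N_T(v_1)}x_w+\sum_{w\in V(\Sigma)\setminus(N_T(v_1)\cup\{v_1\})}x_w.$$
Plugging in $x_1=0$ and $x_w=0$ for all $w\in N_T(v_1)$, the left-hand side and the first sum both vanish, leaving $0=\sum_{w\in V(\Sigma)\setminus(N_T(v_1)\cup\{v_1\})}x_w$. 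As $X\ge0$, every summand is zero, so together with the entries already shown to vanish we conclude $X=\mathbf{0}$, contradicting that $X$ is an eigenvector. Hence $x_q>0$.

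I do not expect a genuine obstacle here: the statement is really the elementary fact that a nonnegative eigenvector of $A(\Sigma)$ cannot place all of its mass on the set $V(\Sigma)\setminus(N_T(v_1)\cup\{v_1\})$. The only point needing a little care is the passage from $x_q=0$ to $x_1=0$, which uses both the chosen ordering $x_1\le x_2\le\cdots\le x_n$ and Claim \ref{claim22}; note also that no information on the sign of $\lambda_1(A(\Sigma))$ is required, since once $x_1=0$ the left-hand side of the eigenequation vanishes regardless.
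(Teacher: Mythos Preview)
Your argument is correct and follows essentially the same route as the paper: assume $x_q=0$, use nonnegativity to kill all entries on $N_T(v_1)\cup\{v_1\}$, then apply the eigen-equation at $v_1$ to force the remaining entries to vanish, contradicting $X\neq\mathbf{0}$. The paper differs only cosmetically---it first dismisses $q=n$ as trivial and then obtains $x_1=\cdots=x_q=0$ directly from the ordering $0\le x_1\le\cdots\le x_q=0$, so your appeal to Claim~\ref{claim22} is not actually needed here.
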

It is nature when $q=n$. Next, suppose that $x_q=0$ and $q<n$. Then $x_1=\cdots=x_q=0$ and $\sum\limits_{i=q+1}^nx_i=\lambda_1(A(\Sigma))x_1=0$, it means $X=\textbf{0}$, a contradiction.

\begin{claim}\label{claim23}
$x_i>x_1\geq0$ for $i\geq2$.
\end{claim}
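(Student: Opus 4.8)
The plan is to prove the two halves of the claim in turn: that $x_1>0$, and that $x_1<x_j$ for all $j\ge 2$ (equivalently, since the entries are sorted increasingly, that $x_1<x_2$).

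For $x_1>0$ I would use the identity $A(\Sigma)=J-I-2A(T)$, which holds because in $\Sigma=(K_n,T^-)$ every edge of $T$ is negative and every edge of $K_n$ outside $T$ is positive. Since $X$ is a unit eigenvector of $A(\Sigma)$ for $\lambda:=\lambda_1(A(\Sigma))$, this yields
$$\lambda=X^{\top}A(\Sigma)X=\left(\sum_i x_i\right)^2-1-4\sum_{v_iv_j\in E(T)}x_ix_j,\qquad\text{so}\qquad \lambda+1=\left(\sum_i x_i\right)^2-4\sum_{v_iv_j\in E(T)}x_ix_j.$$
If $x_1=0$, then $(\sum_i x_i)^2=(\sum_{i\ge 2}x_i)^2\le (n-1)\sum_{i\ge 2}x_i^2=n-1$ by Cauchy--Schwarz (using $\sum_i x_i^2=1$), while $\sum_{v_iv_j\in E(T)}x_ix_j\ge 0$ because $X\ge 0$; hence $\lambda\le n-2$. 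This contradicts $\lambda>n-2$, which holds because $\Sigma$ maximizes $\lambda_1$ over a family that contains $(K_n,T^-_{1,n-3})$ (this graph is unbalanced, its negative-edge tree not being a star) and $\lambda_1(A((K_n,T^-_{1,n-3})))>n-2$ for $n\ge 6$ by Remark~\ref{rek5.11}. So $x_1>0$, hence $x_i\ge x_1>0$ for all $i$.

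For $x_1<x_2$, suppose to the contrary that $x_1=x_2$. By Claim~\ref{claim22} we have $v_2\in N_T(v_1)$, and by Claim~\ref{claim21} we have $d_T(v_1)\ge 2$, so I can fix another $T$-neighbour $v_q$ of $v_1$. Since $v_1v_2,v_1v_q\in E(T)$ and $T$ is a tree, $v_qv_2\notin E(T)$, so reversing the positive edge $v_qv_2$ and the negative edge $v_qv_1$ turns $\Sigma$ into a signed complete graph $\Sigma'$ whose negative edges form the spanning tree $T':=T-v_1v_q+v_qv_2$. Apply Lemma~\ref{lm2.01}(i) with $(r,s,t)=(v_q,v_2,v_1)$: the hypothesis $x_r\ge 0$, $x_t\ge x_s$ holds since $x_{v_q}\ge 0$ and $x_{v_1}=x_{v_2}$, and the strict version is in force since $x_{v_q}\ge x_1>0$; therefore $\lambda_1(A(\Sigma'))>\lambda_1(A(\Sigma))$. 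If $\Sigma'$ is unbalanced this contradicts the maximality of $\Sigma$. The only escape is that $T'$ is a star, and determining which stars arise as $T-v_1v_q+v_qv_2$ forces $T\cong T_{1,n-3}$ with $v_1=u_1$ (the degree-$2$ vertex) and $v_2=u_2$ (its high-degree neighbour); but then $x_1=x_2$ says that the $\lambda_1$-eigenvector of $(K_n,T^-_{1,n-3})$ agrees at $u_1$ and $u_2$, and substituting this into the (few, by symmetry) eigenvalue equations of $T_{1,n-3}$ --- whose characteristic polynomial is recorded in Remark~\ref{rek5.11} --- together with $\lambda_1>n-2>1$ forces $X=\mathbf 0$, a contradiction. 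Hence $x_1<x_2$, and so $x_i>x_1\ge 0$ for every $i\ge 2$.

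The easy half is $x_1>0$, essentially a one-line Rayleigh-quotient estimate. The work in the second half is bookkeeping: checking that the edge-swap $T\mapsto T'$ keeps the negative edges a spanning tree, matching the entry/sign hypotheses of Lemma~\ref{lm2.01}(i) exactly so that the \emph{strict} inequality $\lambda_1(A(\Sigma'))>\lambda_1(A(\Sigma))$ becomes available, and disposing of the degenerate case where $\Sigma'$ turns balanced. I expect that last degenerate case, the one forcing the short explicit computation inside $T_{1,n-3}$, to be the main obstacle.
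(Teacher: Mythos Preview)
The core of your argument---swap the positive edge $v_qv_2$ with the negative edge $v_qv_1$ and apply Lemma~\ref{lm2.01}(i) with strict inequality---is exactly what the paper does. Two differences are worth noting. First, the claim only asserts $x_1\ge 0$, which is automatic in Case~1; your Rayleigh-quotient argument via $A(\Sigma)=J-I-2A(T)$ establishes the stronger fact $x_1>0$, but this is not needed here: the paper obtains the strictness in Lemma~\ref{lm2.01} directly from the immediately preceding claim $x_q>0$ (proved there via the eigenvalue equation at $v_1$). Second, you address the possibility that $\Sigma'$ becomes balanced (so that maximality over the \emph{unbalanced} family yields no contradiction), whereas the paper passes over this in silence. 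Your analysis of that escape route is correct---indeed, adding the eigenvalue equations at $u_1$ and $u_2$ in $(K_n,T_{1,n-3}^-)$ gives $(\lambda_1+1)(x_{u_1}+x_{u_2})=0$, whence $x_{u_1}=x_{u_2}=0$, and then $X=\mathbf 0$ follows easily---and on this point you are in fact more careful than the paper.
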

We only need to prove $x_2>x_1$. Otherwise, we first revers the sign of the positive edge $v_2v_q$ and the negative edge $v_1v_q$, and then exchange the subscripts $v_1$ and $v_2$, the new signed graph is denoted by $\Sigma'$ whose negative edges also form a spanning tree, but $\lambda_1(A(\Sigma))<\lambda_1(A(\Sigma'))$ by Lemma \ref{lm2.01}, a contradiction.

\begin{claim}\label{claim24}
The subscripts of the neighbourhoods of $v_1$ in $T$ must be continuous.
\end{claim}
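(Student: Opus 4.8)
The plan is to argue by contradiction in the style of Claims \ref{claim21}--\ref{claim23}: if the subscripts of $N_T(v_1)$ are not an interval, I will build a signed graph $\Sigma'$ whose negative edges still form an unbalanced spanning tree but with $\lambda_1(A(\Sigma))<\lambda_1(A(\Sigma'))$, contradicting the choice of $\Sigma$. Since $v_2\in N_T(v_1)$ by Claim \ref{claim22}, a failure of the claim means there is an index $\ell\geq3$ with $v_\ell\notin N_T(v_1)$ while some neighbour of $v_1$ has a larger subscript. Root $T$ at $v_1$, let $v_c$ be the child of $v_1$ whose subtree $T_c$ contains $v_\ell$ (so $v_\ell$ is a strict descendant of $v_c$), and note that $v_1v_\ell$ is a positive edge of $\Sigma$. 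The construction of $\Sigma'$ splits according to the size of $c$.

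If $c>\ell$, reverse the signs of the positive edge $v_1v_\ell$ and the negative edge $v_1v_c$: the new negative-edge set is $T'=T-v_1v_c+v_1v_\ell$, which is a spanning tree because deleting $v_1v_c$ detaches $T_c$ (containing $v_\ell$) and adding $v_1v_\ell$ reattaches it; moreover $d_{T'}(v_1)=d_T(v_1)\geq2$ by Claim \ref{claim21}, so $T'\neq K_{1,n-1}$ and $\Sigma'$ is unbalanced. Lemma \ref{lm2.01}(i) with $r=v_1,s=v_\ell,t=v_c$ applies since $x_1\geq0$ and $x_c\geq x_\ell$ (as $c>\ell$), giving $\lambda_1(A(\Sigma))\leq\lambda_1(A(\Sigma'))$; equality would require $x_1=0$ and $x_c=x_\ell$, which is removed by a routine relabelling inside the block of entries equal to $x_\ell$ (placing the non-neighbours of $v_1$ last), so the inequality is strict and we have a contradiction.

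If $c<\ell$, the previous swap points the wrong way, so instead let $v_b$ be the parent of $v_\ell$ in $T$; since $v_b\in T_c$ we have $v_b\neq v_1$ and its subscript satisfies $b\geq2$, whence $x_b>x_1$ by Claim \ref{claim23}. Reverse the signs of the positive edge $v_1v_\ell$ and the negative edge $v_bv_\ell$; the new negative-edge set $T'=T-v_bv_\ell+v_1v_\ell$ is again a spanning tree, and Lemma \ref{lm2.01}(i) with $r=v_\ell,s=v_1,t=v_b$ (here $x_\ell\geq0$ and $x_b>x_1$ strictly) yields $\lambda_1(A(\Sigma))<\lambda_1(A(\Sigma'))$ — a contradiction, \emph{provided} $\Sigma'$ is unbalanced, i.e. $T'\neq K_{1,n-1}$. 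This is the main obstacle: since $v_bv_\ell$ avoids $v_1$ we have $d_{T'}(v_1)=d_T(v_1)+1$, so $T'$ can be a star only when $d_T(v_1)=n-2$, that is, when $T$ is the broom obtained from the star with centre $v_1$ by attaching one extra pendant to a leaf. To rule this out I would argue directly: summing the two equations $\lambda_1(A(\Sigma))\,x_v=(A(\Sigma)X)_v$ at the two non-leaf vertices of the broom gives $(\lambda_1(A(\Sigma))+1)$ times the sum of the corresponding entries of $X$ equal to $0$; as $\lambda_1(A(\Sigma))>-1$ and $X\geq\mathbf{0}$, both entries vanish, and then the single equation at $v_1$ forces the remaining (leaf) entries to vanish as well, so $X=\mathbf{0}$, which is impossible. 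Hence the broom does not occur in Case 1, the argument closes in both cases, and the claim follows.
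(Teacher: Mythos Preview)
Your approach is genuinely different from the paper's. The paper fixes the \emph{minimum} gap index $t$ and the \emph{maximum} neighbour index $q$, then runs a five-way case analysis on whether $v_1$ and $v_2$ lie on the $T$-path from $v_q$ to $v_t$, choosing in each case a negative edge to swap with the positive edge $v_1v_t$. Your version is more structural: root $T$ at $v_1$, locate the child $v_c$ above the gap $v_\ell$, and split only on $c>\ell$ versus $c<\ell$. This is cleaner and, importantly, you explicitly confront the possibility that the swap produces $T'=K_{1,n-1}$ (a balanced $\Sigma'$, hence outside the admissible family); the paper's proof simply asserts ``a contradiction'' in the corresponding subcase without noting that $T'$ could be the star.

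There is one genuine slip in your handling of that obstacle. After summing the eigenvalue equations at the two non-leaf vertices of the broom you correctly obtain $(\lambda_1+1)(x_1+x_b)=0$, hence $x_1=x_b=0$. But the next sentence --- ``the single equation at $v_1$ forces the remaining (leaf) entries to vanish'' --- is not true as stated: that equation only gives $x_\ell=\sum_{k\neq 1,b,\ell}x_k$, which by itself does not force anything to vanish. You can repair this by also using the leaf equations to show all leaves share a common value, but there is a much shorter route already available to you: $x_b=0=x_1$ directly contradicts Claim~\ref{claim23}, which says $x_i>x_1$ for every $i\geq 2$. So the broom is excluded the moment you reach $x_1=x_b=0$, and no further computation is needed. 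With that fix (and a slightly more careful phrasing of the relabelling in your Case~1, e.g.\ performing it \emph{before} the case split so that $c>\ell$ forces $x_c>x_\ell$ strictly), your argument goes through.
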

Otherwise, let $t$ be the minimum subscript such that $v_tv_1\notin E(T)$ and $2<t<q$. If $x_t=x_q$, then we just need to exchange the subscripts $v_q$ and $v_t$, as desired. Next, let $0\leq x_t<x_q$. Since $T$ is connected, there exists an unique path $P$ between $v_q$ and $v_t$ in $T$. We first notice that $v_2\in V(P)$ and $v_1\notin V(P)$ is impossible, otherwise, there exists a cycle which contains $v_1,v_2,v_q$ in $T$, a contradiction. If $\{v_1,v_2\}\nsubseteq V(P)$, then a new signed graph $\Sigma'$ can be obtained from $\Sigma$ by reversing the sign of the positive edge $v_1v_t$ and the negative edge $v_1v_q$ whose negative edges also form a spanning tree such that $\lambda_1(A(\Sigma))<\lambda_1(A(\Sigma'))$ by Lemma \ref{lm2.01}, a contradiction.
If $\{v_1,v_2\}\subset V(P)$ and $v_2v_t\in E(T)$, then a new signed graph $\Sigma'$ can be obtained from $\Sigma$ by reversing the sign of the positive edge $v_1v_t$ and the negative edge $v_2v_t$ whose negative edges also form a spanning tree such that $\lambda_1(A(\Sigma))<\lambda_n(A(\Sigma'))$ by Lemma \ref{lm2.01} and Claim \ref{claim23}, a contradiction.
Assume that $\{v_1,v_2\}\subset V(P)$ but $v_2v_t\notin E(T)$, this means that we can find a vertex $v_s$ such that $v_2v_s\in E(T)$, so $t<s$. Then a new signed graph $\Sigma'$ can be obtained from $\Sigma$ by reversing the sign of the positive edge $v_1v_t$ and the negative edge $v_2v_s$ whose negative edges also form a spanning tree such that $\lambda_1(A(\Sigma))<\lambda_1(A(\Sigma'))$ by Lemma \ref{lm2.01} and Claim \ref{claim23}, a contradiction.
Now, there is only one case, i.e., $v_2\notin V(P)$ and $v_1\in V(P)$, it means that there is a vertex $v_s$ such that $v_sv_1\in E(T)$ and $v_s\in V(P)$. Thus, $s>2$. If $v_tv_s\in E(T)$, then a new signed graph $\Sigma'$ can be obtained from $\Sigma$ by reversing the sign of the positive edge $v_1v_t$ and the negative edge $v_sv_t$ whose negative edges also form a spanning tree such that $\lambda_1(A(\Sigma))<\lambda_1(A(\Sigma'))$ by Lemma \ref{lm2.01} and Claim \ref{claim23}, a contradiction.
If $v_tv_s\notin E(T)$, then there is a vertex $v_r$ such that $v_rv_s\in E(T)$ and $v_r\in V(P)$, and so $r>t$. Then a new signed graph $\Sigma'$ can be obtained from $\Sigma$ by reversing the sign of the positive edge $v_1v_t$ and the negative edge $v_sv_r$ whose negative edges also form a spanning tree such that $\lambda_1(A(\Sigma))<\lambda_1(A(\Sigma'))$ by Lemma \ref{lm2.01} and Claim \ref{claim23}, a contradiction.

\begin{claim}\label{claim25}
$q=n-1$.
\end{claim}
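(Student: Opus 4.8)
The plan is to argue by contradiction, exactly in the style of Claims~\ref{claim21}--\ref{claim24}, by exhibiting a sign reversal that strictly increases $\lambda_1(A(\Sigma))$. By Claim~\ref{claim22} and Claim~\ref{claim24} the neighbourhood of $v_1$ in $T$ is the consecutive block $N_T(v_1)=\{v_2,\dots,v_q\}$, so $d_T(v_1)=q-1$ and, by Claim~\ref{claim21}, $q\ge 3$. First note $q\le n-1$: if $q=n$, then $v_1$ is $T$-adjacent to every other vertex, so $T\cong K_{1,n-1}$ and $(K_n,T^-)$ is balanced, contradicting the hypothesis on $\Sigma$. Hence it remains only to rule out $3\le q\le n-2$.

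So suppose $q\le n-2$; then $v_n\notin N_T(v_1)$. Let $P$ be the unique $v_1$--$v_n$ path in $T$ and let $v_\ell$ be the vertex of $P$ adjacent to $v_n$. Since $P$ has length at least $2$ we have $v_\ell\neq v_1$ and $v_\ell\neq v_n$, so $\ell\ge 2$ and hence $x_\ell>x_1\ge 0$ by Claim~\ref{claim23}. In $\Sigma$ the edge $v_1v_n$ is positive (it lies outside $T$) while $v_\ell v_n\in E(T)$ is negative. Let $\Sigma'$ be obtained from $\Sigma$ by reversing the signs of these two edges; its negative edges then form $T'=(T-v_\ell v_n)+v_1v_n$. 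This $T'$ is again a spanning tree: deleting $v_\ell v_n$ splits $T$ into the component of $v_n$ and the component of $v_\ell$, and the latter contains $v_1$ because the $v_1$--$v_\ell$ portion of $P$ is untouched, so adding $v_1v_n$ restores connectedness with $n-1$ edges. Moreover $T'\ncong K_{1,n-1}$, so $\Sigma'$ is still unbalanced: indeed $\deg_{T'}(v_1)=q\le n-2$, and any other vertex of degree $n-1$ in $T'$ is necessarily one of $v_2,\dots,v_q$ (since $\deg_{T'}(v_n)=\deg_T(v_n)<n-1$), and together with $v_1,v_2,v_3$ and the edges $v_1v_2,v_1v_3\in E(T')$ it would span a triangle in $T'$, impossible. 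Hence $\Sigma'$ lies in the competition class, and applying Lemma~\ref{lm2.01}(i) with $(r,s,t)=(v_n,v_1,v_\ell)$ — so that $x_r=x_n\ge 0$ and $x_t=x_\ell>x_1=x_s$ — gives $\lambda_1(A(\Sigma))<\lambda_1(A(\Sigma'))$, contradicting the maximality of $\Sigma$. Therefore $q=n-1$.

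The two invocations of Lemma~\ref{lm2.01} and the accompanying eigenvector inequalities are routine once the correct pair of edges is chosen. The only genuine obstacle is the admissibility bookkeeping for $\Sigma'$: one must take $v_\ell$ to be the neighbour of $v_n$ \emph{on the path toward $v_1$} (an arbitrary neighbour of $v_n$ would generally produce a cycle instead of a tree), and one must separately verify $T'\ncong K_{1,n-1}$ so that $\Sigma'$ is genuinely unbalanced — without this last point the argument would collapse, since the balanced star only yields $\lambda_1(A(\Sigma))<n-1$, which is already known.
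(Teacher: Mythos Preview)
Your argument is correct and in fact slightly cleaner than the paper's. The paper attacks the claim by looking at $v_{q+1}$ and taking $v_s$ to be the neighbour of $v_1$ on the $v_1$--$v_{q+1}$ path in $T$; it must then split into two cases according to whether $v_sv_{q+1}\in E(T)$, invoking Lemma~\ref{lm2.01}(i) in one case and Lemma~\ref{lm2.01}(ii) in the other. You instead target $v_n$ and take $v_\ell$ to be the neighbour of $v_n$ on the path --- i.e.\ you pick the penultimate vertex from the \emph{far} end rather than the near end. Since $v_\ell v_n\in E(T)$ by construction, the case split evaporates and a single application of Lemma~\ref{lm2.01}(i) suffices. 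The same trick would work with $v_{q+1}$ in place of $v_n$; the essential simplification is the choice of endpoint for the neighbour, not the choice of target vertex.

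You also check explicitly that $T'\ncong K_{1,n-1}$ (so that $\Sigma'$ stays in the unbalanced class), which the paper leaves implicit throughout its Claims~\ref{claim21}--\ref{claim26}. Your justification here is terse but sound: a vertex of degree $n-1$ in $T'$ must be a $T'$-neighbour of $v_1$, hence lies in $\{v_2,\dots,v_q,v_n\}$; you dispose of $v_n$ via $\deg_{T'}(v_n)=\deg_T(v_n)<n-1$ (which holds since $d_T(v_1)\ge 2$ forces $T$ not to be a star at $v_n$), and any $v_j$ with $2\le j\le q$ would create a triangle with $v_1$ and another element of $\{v_2,\dots,v_q\}$. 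One cosmetic slip: your closing paragraph refers to ``two invocations of Lemma~\ref{lm2.01}'', but your proof uses it only once.
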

Otherwise, we consider the vertex $v_{q+1}$ and $q+1\leq n-1$ since $d(T)\geq3$. Let $P$ be the path of $T$ between $v_1$ and $v_{q+1}$, then there must exist a vertex $v_s\in N_T(v_1)$ such that $v_s\in V(P)$. If $v_sv_{q+1}\in E(T)$, then a new signed graph $\Sigma'$ can be obtained from $\Sigma$ by reversing the sign of the positive edge $v_1v_{q+1}$ and the negative edge $v_sv_{q+1}$ whose negative edges also form a spanning tree such that $\lambda_1(A(\Sigma))<\lambda_1(A(\Sigma'))$ by Lemma \ref{lm2.01} and Claim \ref{claim23}, a contradiction. If $v_sv_{q+1}\notin E(T)$, then there exists another vertex $v_r\in N_T(v_s)$ such that $v_r\in V(P)$, and $r>q+1$ obviously. Then a new signed graph $\Sigma'$ can be obtained from $\Sigma$ by reversing the sign of the positive edge $v_1v_{q+1}$ and the negative edge $v_sv_r$ whose negative edges also form a spanning tree such that $\lambda_1(A(\Sigma))<\lambda_1(A(\Sigma'))$ by Lemma \ref{lm2.01} and Claim \ref{claim23}, a contradiction.

\begin{claim}\label{claim26}
$v_2v_n\in E(T)$.
\end{claim}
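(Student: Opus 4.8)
The plan is to argue by contradiction: suppose $v_2v_n\notin E(T)$, and then exhibit an unbalanced signed complete graph $\Sigma'$ whose negative edges again form a spanning tree and which satisfies $\lambda_1(A(\Sigma'))>\lambda_1(A(\Sigma))$, contradicting the choice of $\Sigma$.

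First I would read off the structure of $T$ from the earlier claims. By Claims~\ref{claim22}, \ref{claim24} and \ref{claim25}, the subscripts of the neighbours of $v_1$ in $T$ are consecutive and run from $2$ to $n-1$, i.e. $N_T(v_1)=\{v_2,v_3,\dots,v_{n-1}\}$. Hence $v_1$ already uses $n-2$ of the $n-1$ edges of the spanning tree $T$, so the single remaining edge must attach $v_n$ to some vertex $v_j$ with $2\le j\le n-1$ (it cannot be $v_1$, since $v_1v_n\notin E(T)$); by the hypothesis $v_2v_n\notin E(T)$ we in fact get $3\le j\le n-1$. In particular $v_n$ is a pendant vertex of $T$, the edge $v_nv_j$ is negative in $\Sigma$, and the edge $v_nv_2$, being a non-tree edge, is positive in $\Sigma$.

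Next I would slide the pendant $v_n$ from $v_j$ onto $v_2$. Let $\Sigma'$ be obtained from $\Sigma$ by reversing the sign of the positive edge $v_nv_2$ and of the negative edge $v_nv_j$. Then the negative edges of $\Sigma'$ form $\{v_1v_i:2\le i\le n-1\}\cup\{v_2v_n\}$, a copy of the double star $T_{1,n-3}$ and hence a spanning tree; since $n\ge 6$ this tree is not the star $K_{1,n-1}$, so $\Sigma'$ is unbalanced and is an admissible competitor. I then apply Lemma~\ref{lm2.01}(i) with common vertex $r=v_n$, reversed positive edge $rs=v_nv_2$ and reversed negative edge $rt=v_nv_j$. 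As $X$ is a nonzero nonnegative vector, $x_r=x_n=\max_i x_i>0$, so $x_r\ge 0$; and since $3\le j$, the ordering $x_1\le x_2\le\dots\le x_n$ gives $x_t=x_j\ge x_2=x_s$. Thus the hypothesis of Lemma~\ref{lm2.01}(i) holds, and it holds strictly because $x_r=x_n>0$, whence $\lambda_1(A(\Sigma))<\lambda_1(A(\Sigma'))$, a contradiction. Therefore $v_2v_n\in E(T)$, and combined with Claims~\ref{claim24} and \ref{claim25} this forces $T\cong T_{1,n-3}$ in Case 1.

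The only points that call for genuine care are verifying that $\Sigma'$ is an admissible competitor (its negative edges form a spanning tree and it is unbalanced) and that the inequality produced by Lemma~\ref{lm2.01}(i) is strict; since $x_n>0$ supplies the strictness for free, once the structure of $T$ from Claims~\ref{claim22}, \ref{claim24} and \ref{claim25} is in hand the argument is a single direct application of the switching lemma, and I foresee no serious obstacle.
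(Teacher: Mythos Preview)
Your proof is correct and follows essentially the same approach as the paper: assume $v_n$ is attached at some $v_j$ with $j>2$, swap the signs of $v_nv_2$ and $v_nv_j$, and invoke Lemma~\ref{lm2.01}(i) with $r=v_n$ to obtain a strict increase in $\lambda_1$. You add useful detail the paper omits---explicitly reading off $N_T(v_1)=\{v_2,\dots,v_{n-1}\}$ from the earlier claims, checking that $\Sigma'$ has a spanning tree of negative edges and is unbalanced, and noting that $x_n>0$ supplies strictness---but the core move is identical.
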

Otherwise, suppose that $v_sv_n\in E(T)$ and $s>2$, then a new signed graph $\Sigma'$ can be obtained from $\Sigma$ by reversing the sign of the positive edge $v_2v_n$ and the negative edge $v_sv_n$ whose negative edges also form a spanning tree such that $\lambda_1(A(\Sigma))<\lambda_1(A(\Sigma'))$ by Lemma \ref{lm2.01}, a contradiction.

By Claims \ref{claim21}-\ref{claim26}, $T\cong T_{1,n-3}$ immediately.
Note that $-X$ must be an eigenvector of $\Sigma$ if $X$ is an eigenvector. Thus the case that $X$ is a nonpositive eigenvector is not considered independently.

\noindent{\bf{$\underline{\mbox{Case 2. }}$}} $X$ is neither nonpositive nor nonnegative. Let $V_+=\{v_i|x_{v_i}\geq0\}$ and $V_-=\{v_i|x_{v_i}<0\}$. Clearly, $V_+,V_-\neq\emptyset$ and there must exist a vertex $v$ such that  $x_v>0$.
Let $\mid V_+\mid=a$, $\mid V_-\mid=b$ and $x_1\leq\ldots\leq x_b<0\leq x_{b+1}\leq\ldots\leq x_n$. For convenience, set $u_i=v_{n-(i-1)}$ and $y_i=x_{n-(i-1)}$ for $1\leq i\leq a$, then $x_1\leq\ldots\leq x_b<0\leq y_a\leq\ldots\leq y_1$ and $y_1>0$.

\noindent{\bf{$\underline{\mbox{Subcase 2.1. }}$}} $a\geq2$ and $b\geq2$.
\begin{claim}\label{claim27}
$V_+$ and $V_-$ are independent sets in $T$.
\end{claim}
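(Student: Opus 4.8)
The plan is to argue by contradiction using the sign‑reversal Lemma~\ref{lm2.01}(ii). Suppose $V_+$ is not independent in $T$ (the case of $V_-$ is analogous: one replaces the role of $w$ below by the vertex with positive entry guaranteed in this case, and uses $a\ge 2$ in place of $b\ge 2$ for the ``dodging'' step). Then some negative edge $v_iv_j\in E(T)$ has $v_i,v_j\in V_+$, so $x_{v_i}x_{v_j}\ge 0$. Deleting $v_iv_j$ splits $T$ into components $T_i\ni v_i$ and $T_j\ni v_j$, and since $T$ is a tree the edge $v_iv_j$ is the \emph{only} edge of $T$ joining $T_i$ to $T_j$; hence for every $p\in V(T_i)$, $q\in V(T_j)$ with $\{p,q\}\ne\{v_i,v_j\}$ the edge $pq$ is positive in $\Sigma$ and $T-v_iv_j+pq$ is again a spanning tree. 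The strategy is to pick such a $pq$ with $x_px_q\le x_{v_i}x_{v_j}$ and some entry nonzero, so that reversing the signs of $pq$ and $v_iv_j$ yields a signed complete graph $\Sigma'$ with $\lambda_1(A(\Sigma))<\lambda_1(A(\Sigma'))$ whose negative edges still form a spanning tree --- contradicting the maximality of $\lambda_1(A(\Sigma))$, once we check $\Sigma'$ is still unbalanced.

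To build $pq$, use $b=|V_-|\ge 2$: pick $w\in V_-$, so $x_w<0$, and suppose $w\in V(T_i)$ (if $w\in V(T_j)$, pair $w$ with $v_i$ instead of $v_j$). Since $x_w<0\le x_{v_i}$ we have $w\ne v_i$, so $wv_j$ is a positive edge and $T-v_iv_j+wv_j$ is a spanning tree; moreover $x_wx_{v_j}\le 0\le x_{v_i}x_{v_j}$ with $x_w\ne 0$, so Lemma~\ref{lm2.01}(ii) applies with strict inequality. As $\Sigma$ is unbalanced, $T$ is not a star, and in all but one exceptional shape of $T$ the swapped tree $T-v_iv_j+wv_j$ is not a star either; then $\Sigma'$ is an unbalanced signed complete graph whose negative edges form a spanning tree, and we have our contradiction.

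I expect the only real obstacle to be that single exceptional configuration: an edge swap can turn $T$ into the star $K_{1,n-1}$, which would make $\Sigma'$ balanced and hence ineligible for the competition. A short degree count shows this forces $T$ to be a ``broom'' --- a star plus one pendant edge --- with $w$ its high‑degree centre; but then, using $b\ge 2$, a second vertex $w'\in V_-$ is available, it is necessarily a leaf of this broom, and the alternative swap $v_iv_j\leftrightarrow w'v_j$ (or $v_iv_j\leftrightarrow w'v_i$, according to which component contains $w'$) still satisfies the hypotheses of Lemma~\ref{lm2.01}(ii) and no longer produces a star. Hence an admissible index‑increasing modification always exists, which completes the contradiction and shows that $V_+$, and symmetrically $V_-$, is an independent set of $T$.
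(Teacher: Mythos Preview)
Your argument is correct and follows essentially the same route as the paper: assume an edge of $T$ lies inside $V_+$ (respectively $V_-$), then use the sign-reversal lemma to manufacture another admissible signed complete graph with strictly larger index. Two remarks are in order.

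First, a small technicality: since your positive edge $wv_j$ and your negative edge $v_iv_j$ share the endpoint $v_j$, the applicable part of Lemma~\ref{lm2.01} is (i), not (ii) (part (ii) as stated requires four \emph{distinct} vertices). With $r=v_j$, $s=w$, $t=v_i$ one has $x_{v_j}\ge 0$ and $x_{v_i}\ge 0>x_w$, so the hypothesis of (i) holds with a strict inequality and the conclusion is exactly what you claim. The product inequality $x_wx_{v_j}\le x_{v_i}x_{v_j}$ you wrote is of course equivalent, so this is only a matter of citation.

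Second, your handling of the exceptional ``broom'' configuration---where the swap $T-v_iv_j+wv_j$ yields the star $K_{1,n-1}$, making $\Sigma'$ balanced and hence outside the competition---is more careful than the paper's own argument. The paper chooses a path $v_r u_s u_t$ in $T$ with $v_r\in V_-$ and $u_s,u_t\in V_+$, swaps $u_su_t$ for $v_ru_t$, and simply asserts a contradiction, without verifying that the resulting tree is not a star. Your dodge via a second vertex $w'\in V_-$ (respectively $w'\in V_+$), available precisely because $b\ge 2$ (respectively $a\ge 2$) in Subcase~2.1, is what closes that gap cleanly; the alternative swap with the leaf $w'$ never produces a star for $n\ge 6$, and the hypotheses of Lemma~\ref{lm2.01}(i) are again met strictly.
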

Without loss of generality, we only consider $V_+$. Assume the contrary, there must exist three vertices $v_r,$ $u_s$ and $u_t$ such that $\{v_ru_s, u_su_t\}\subset E(T)$. Then a new signed graph $\Sigma'$ can be obtained from $\Sigma$ by reversing the sign of the positive edge $v_ru_t$ and the negative edge $u_su_t$ whose negative edges also form a spanning tree such that $\lambda_1(A(\Sigma))<\lambda_1(A(\Sigma'))$ by Lemma \ref{lm2.01}, a contradiction.

\begin{claim}\label{claim27}
$u_1v_1\in E(T)$.
\end{claim}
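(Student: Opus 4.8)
\noindent\emph{Proof proposal.}
The plan is to argue by contradiction with a single sign‑switching step from Lemma~\ref{lm2.01}(i). As shown in the previous claim, $V_+$ and $V_-$ are independent sets of $T$; since $T$ is a connected tree, this means $T$ is bipartite with colour classes exactly $V_+$ and $V_-$, so every edge of $T$ joins $V_+$ to $V_-$. Suppose, for contradiction, that $u_1v_1\notin E(T)$. Because $v_1\in V_-$ and $u_1\in V_+$, there is a unique $v_1$–$u_1$ path $P$ in $T$; let $u_j$ be the neighbour of $v_1$ along $P$. By bipartiteness $u_j\in V_+$, and $u_j\neq u_1$ since $u_1v_1\notin E(T)$ forces $P$ to have length at least $2$. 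As $v_1u_j$ is an edge of $P$, deleting it from $T$ separates $v_1$ from $u_1$, so $T':=T-v_1u_j+v_1u_1$ is again a spanning tree of $K_n$.

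Next I would invoke Lemma~\ref{lm2.01}(i) with $r=v_1$, $s=u_1$, and $u_j$ playing the role of the lemma's $t$: the edge $rs=v_1u_1$ is positive in $\Sigma$, the edge $v_1u_j$ is negative, and the hypotheses ``$x_r\le 0$ and $x_{u_j}\le x_s$'' hold because $x_{v_1}<0$ (recall $v_1\in V_-$) and $x_{u_j}\le x_n=x_{u_1}$ (here $x_{u_1}=y_1$ is the largest entry of $X$). Since the first inequality is strict, the lemma yields $\lambda_1(A(\Sigma))<\lambda_1(A(\Sigma'))$, where $\Sigma'=(K_n,(T')^-)$.

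It remains to verify that $\Sigma'$ is an admissible competitor, i.e.\ that it is \emph{unbalanced}; this is the only delicate point, and it is precisely where the standing hypotheses $a\ge 2$ and $b\ge 2$ of Subcase~2.1 are used beyond the bipartiteness claim. A signed complete graph whose negative edges form a spanning tree is balanced exactly when that tree is a star, since the negative edges of a balanced signed complete graph form a complete bipartite graph, which is a tree only in the star case. Now $T'$ is still bipartite with classes $V_+$ and $V_-$ (the switch deletes the crossing edge $v_1u_j$ and adds the crossing edge $v_1u_1$), and a star has a colour class of size $1$, whereas $|V_+|=a\ge 2$ and $|V_-|=b\ge 2$. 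Hence $T'$ is not a star, so $\Sigma'$ is unbalanced, and $\lambda_1(A(\Sigma))<\lambda_1(A(\Sigma'))$ contradicts the maximality of $\Sigma$. Therefore $u_1v_1\in E(T)$, and the rest reduces to routine bookkeeping. The main thing to be careful about throughout is exactly this: after each switch one must confirm the graph stays in the optimised family, namely a signed complete graph whose negative edges form a \emph{non‑star} spanning tree.
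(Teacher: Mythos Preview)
Your proof is correct and follows essentially the same route as the paper: a single sign-switch via Lemma~\ref{lm2.01}(i) to contradict maximality. The only difference is the choice of pivot---the paper switches around $u_1$ (replacing the negative edge $u_1v_s$, for a neighbour $v_s\in V_-$ of $u_1$, by the positive edge $u_1v_1$, after relabelling if $x_s=x_1$), whereas you switch around $v_1$; your write-up is also more explicit than the paper's in checking that the resulting negative edge set is again a non-star spanning tree.
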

Otherwise, let $u_1v_s\in E(T)$ but $s>1$. If $x_1=x_s$, then we just need to exchange the subscripts $v_1$ and $v_s$, as desired. If $x_1<x_s<0$, then a new signed graph $\Sigma'$ can be obtained from $\Sigma$ by reversing the sign of the positive edge $v_1u_1$ and the negative edge $v_su_1$ whose negative edges also form a spanning tree such that $\lambda_1(A(\Sigma))<\lambda_1(A(\Sigma'))$ by Lemma \ref{lm2.01}, a contradiction.

\begin{claim}\label{claim28}
$u_1v_i\in E(T)$ for all $1\leq i\leq b$.
\end{claim}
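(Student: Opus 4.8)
The plan is to mimic the edge–switching arguments of the preceding claims and apply Lemma \ref{lm2.01}(i), treating all $i$ simultaneously rather than inductively. Fix an index $i$ with $1\le i\le b$ and suppose for contradiction that $u_1v_i\notin E(T)$. Since the negative edges of $\Sigma$ are exactly $E(T)$, the edge $v_iu_1$ is then positive in $\Sigma$, while every edge of $T$ is negative. I would take $P\colon v_i=w_0,w_1,\dots,w_m=u_1$ to be the unique $v_i$--$u_1$ path in $T$; because $v_iu_1\notin E(T)$ we have $m\ge2$, and because $V_+$ and $V_-$ are independent in $T$, the pair $(V_+,V_-)$ is a bipartition of the connected tree $T$, so $m$ is odd and hence $m\ge3$; in particular $w_1\notin\{v_i,u_1\}$.

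Next I would form $\Sigma'$ from $\Sigma$ by reversing the signs of the positive edge $v_iu_1$ and the negative edge $v_iw_1$, and check that it is a valid competitor, i.e.\ an unbalanced signed complete graph on $n$ vertices whose negative edges form a spanning tree. Its negative edges form $(E(T)\setminus\{v_iw_1\})\cup\{v_iu_1\}$, which is the edge set of a spanning tree $T'$ because $v_iw_1$ lies on the unique cycle of $T+v_iu_1$. One still has to rule out $T'\cong K_{1,n-1}$, since for $n\ge 4$ a signed complete graph with negative edges forming a spanning tree is balanced exactly when that tree is a star: if $T'$ were a star with center $c$, then $v_iu_1\in E(T')$ would force $c\in\{v_i,u_1\}$, and $c=v_i$ makes $T=T'-v_iu_1+v_iw_1$ disconnected, while $c=u_1$ forces the $v_i$--$u_1$ path of $T$ to be $v_i,w_1,u_1$ of length $2$, contradicting that $m$ is odd. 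Hence $\Sigma'$ is unbalanced.

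Then I would invoke Lemma \ref{lm2.01}(i) with $r=v_i$, $s=u_1$, $t=w_1$: these are three distinct vertices, $v_iu_1$ is the positive edge at $r$ and $v_iw_1$ the negative edge at $r$, and since $x_{u_1}=x_n$ is the largest coordinate of $X$ we have $x_t=x_{w_1}\le x_{u_1}=x_s$ together with $x_r=x_{v_i}<0$. Thus the hypothesis ``$x_r\le 0$ and $x_t\le x_s$'' holds with a strict inequality among the coordinates, so the lemma yields $\lambda_1(A(\Sigma))<\lambda_1(A(\Sigma'))$, contradicting the maximality of $\Sigma$. Therefore $u_1v_i\in E(T)$ for every $1\le i\le b$.

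I do not expect a genuine obstacle. The choice of which edge to reverse is essentially forced: it must lie on $P$ (to preserve the spanning-tree property after switching) and be incident to $v_i$ (so that the inequality demanded by Lemma \ref{lm2.01}(i) at $r=v_i$ collapses to ``$x_{w_1}\le$ the maximum coordinate of $X$'', which is automatic). The only point that needs attention — and the reason the independence of $V_+$ and $V_-$ is proved first — is verifying that the switched graph $\Sigma'$ remains unbalanced, i.e.\ that $T'\not\cong K_{1,n-1}$, which the parity of the path $P$ delivers.
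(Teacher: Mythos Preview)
Your argument is correct and follows essentially the same route as the paper: take the unique $v_i$--$u_1$ path in $T$, swap the positive edge $v_iu_1$ with the first tree edge $v_iw_1$ on that path, and apply Lemma~\ref{lm2.01}(i) using $x_{v_i}<0$ and $x_{w_1}\le x_{u_1}$. The paper's version is terser --- it simply notes that the neighbour of $v_t$ on the path lies in $V_+$ (your $w_1$ is its $u_s$) and invokes the lemma --- whereas you additionally verify that the modified tree $T'$ is not a star, so that $\Sigma'$ stays in the unbalanced class; this is a point the paper leaves implicit but which your parity argument (odd path length forces $m\ge3$) handles cleanly.
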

Assume the contrary, let $u_1v_t\notin E(T)$, $t>1$ clearly. Then there must be a vertex $u_s$ and a path $P$ between $u_1$ and $v_t$ in $T$ such that $u_sv_t\in E(P)$. We
construct a new signed graph $\Sigma'$ from $\Sigma$ by reversing the sign of the positive edge $u_1v_t$ and the negative edge $u_sv_t$ whose negative edges also form a spanning tree such that $\lambda_1(A(\Sigma))<\lambda_1(A(\Sigma'))$ by Lemma \ref{lm2.01}, a contradiction.

Let $W=N_T(V_+\setminus \{u_1\})=\{v_1,\ldots,v_s\}$ for $1\leq s\leq b$. By Claim \ref{claim28}, we have every vertex in $V_+\setminus \{u_1\}$ is adjacent to just one vertex in $W$ and $x_1=\cdots=x_s$ by Lemma \ref{lm2.01}. Without loss of generality, let $d_T(v_1)\geq\cdots\geq d_T(v_s)$. For any $1\leq i<j\leq s$, let $\{u_kv_i,u_lv_j\}\subset E(T)$. Set $\lambda_1=\lambda_1(A(\Sigma))$, by characteristic equation we get
$$\left\{
    \begin{array}{ll}
      (\lambda_1+1)(x_i-x_j)=2(\sum\limits_{w\in N_T(v_j)\setminus \{u_1\}}y_w-\sum\limits_{w\in N_T(v_i)\setminus \{u_1\}}y_w),\\
      (\lambda_1+1)(y_k-y_l)=2(x_j-x_i).
    \end{array}
  \right.
$$
Obviously, $\sum\limits_{w\in N_T(v_j)\setminus \{u_1\}}y_w=\sum\limits_{w\in N_T(v_i)\setminus \{u_1\}}y_w$ and $y_k=y_l$ since $x_i=x_j$. Thus, $y_2=\cdots=y_a$ and $d_T(v_1)=\cdots=d_T(v_s).$

If $s=1$, then $T\cong T_{a-1,b-1}$, but $\lambda_1(A((K_n, T^-_{a-1,b-1})))\leq \lambda_1(A((K_n, T^-_{1,n-3})))$ with equality if and only if one of $a$ and $b$ equals to 2 by Remark \ref{rek5.11}, and we are done. We assume $s\geq2$ following. If $y_2=\cdots=y_a>0$, then we can construct a new signed graph $\Sigma'$ from $\Sigma$ by reversing the sign of the positive edge $v_1w$ and the negative edge $v_iw$ whose negative edges also form a spanning tree, where $2\leq i\leq s$ and $w\in N_T(v_i)$, such that $\lambda_1(A(\Sigma))<\lambda_1(A(\Sigma'))$ by Lemma \ref{lm2.01}, a contradiction. If $y_2=0$. Note that $y_1>0.$ By characteristic equation, we have
$$\left\{
    \begin{array}{ll}
      \lambda_1y_1=-\sum\limits_{i=1}^bx_i,\\
      y_1=x_j-\sum\limits_{i=1,i\neq j}^bx_i,\\
      \lambda_1x_j=-y_1+\sum\limits_{i=1,i\neq j}^bx_i,
    \end{array}
  \right.
$$
by the above equations, we have$$\lambda_1^2-2\lambda_1-3=0,$$thus $\lambda_1=3$. However, $3<\lambda_1(A((K_n,T^-_{1,n-3})))$ for $n\geq6$ by Remark \ref{rek5.11}, a contradiction. Thus, $T\cong T_{1,n-3}$.

\noindent{\bf{$\underline{\mbox{Subcase 2.2. }}$}} $a=1$ or $b=1$.
If $a=1$, then $u_1v_1\in E(T)$ by using a similar discussion as Claim \ref{claim27} firstly. Since $T\ncong K_{1,n-1}$, there exists at least one vertex which is not adjacent to $u_1$. Let $U=V^-\setminus N_T(u_1)$. If $\mid U\mid\geq2$, then there are two vertices $v\in N_T(u_1)$ and $w\in U$ such that $vw\in E(T)$. And we can construct a new signed graph $\Sigma'$ from $\Sigma$ by reversing the sign of the positive edge $u_1w$ and the negative edge $vw$ whose negative edges also form a spanning tree, but $\lambda_1(A(\Sigma))<\lambda_1(A(\Sigma'))$ by Lemma \ref{lm2.01}, a contradiction. Thus, $\mid U\mid=1$. By similar discussion with Claim \ref{claim26}, we have $v_2v_n\in E(T)$, that is, $T\cong T_{1,n-3}$. $b=1$ is similar. This completes the proof.
\end{proof}

\section{Proof of Theorem \ref{thm5.3}}\label{sec5}
We first give the following result about $\lambda_n(A(\Sigma))$, which are the most used tools in the identifications of graphs with minimum least eigenvalue.
\begin{lemma} \label{lm5.11} Let $r,s,t$ and $u$ be distinct vertices of a signed graph $\Sigma$, $X=(x_1,x_2,\ldots,x_n)^T$ be an unit eigenvector corresponding to $\lambda_n(A(\Sigma))$. Then
\begin{enumerate}
  \item [(i)] let $\Sigma'$ be obtained from $\Sigma$ by reversing the sign
of the positive edge $rs$ and the negative edge $rt$. If $x_r(x_s-x_t)\geq 0$, then $\lambda_n(A(\Sigma))\geq\lambda_n(A(\Sigma'))$. If $x_r\neq0$ or $x_s\neq x_t$, then $\lambda_n(A(\Sigma))>\lambda_n(A(\Sigma'))$;
  \item [(ii)] let $\Sigma'$ be obtained from $\Sigma$ by reversing the sign of
the positive edge $rs$ and the negative edge $tu$. If $x_rx_s\geq x_tx_u$, then $\lambda_n(A(\Sigma))\geq\lambda_n(A(\Sigma'))$. If at least one of the entries $x_r,x_s,x_t,x_u$ is distinct from zero, then $\lambda_n(A(\Sigma))>\lambda_n(A(\Sigma'))$.
\end{enumerate}
\end{lemma}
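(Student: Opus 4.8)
The plan is to exploit the variational (Rayleigh--Ritz) characterization of the least eigenvalue, exactly dual to the way Lemma \ref{lm2.01} treats $\lambda_1$ through the largest Rayleigh quotient. Write $\lambda_n=\lambda_n(A(\Sigma))$ and let $X$ be the given unit eigenvector, so that $X^T A(\Sigma) X=\lambda_n$. Since $A(\Sigma')$ is again real symmetric of order $n$, the minimum principle gives $\lambda_n(A(\Sigma'))=\min_{\|Y\|=1}Y^T A(\Sigma')Y\le X^T A(\Sigma')X$. Subtracting,
\[
\lambda_n(A(\Sigma))-\lambda_n(A(\Sigma'))\ \ge\ X^T\bigl(A(\Sigma)-A(\Sigma')\bigr)X,
\]
so the whole lemma reduces to computing the quadratic form of the perturbation matrix $B:=A(\Sigma)-A(\Sigma')$, which is supported only on the two reversed edges.

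Next I would evaluate $B$ entrywise. Reversing a positive edge $rs$ changes the entry from $+1$ to $-1$, contributing $B_{rs}=B_{sr}=2$; reversing a negative edge from $-1$ to $+1$ contributes $-2$ in the corresponding positions. Hence in case (i), where the negative edge shares the endpoint $r$, one gets $X^T B X=4x_r(x_s-x_t)$, and in case (ii), where the negative edge $tu$ is disjoint from $rs$, one gets $X^T B X=4(x_rx_s-x_tx_u)$. The hypotheses $x_r(x_s-x_t)\ge 0$ and $x_rx_s\ge x_tx_u$ make these forms nonnegative, yielding the two non-strict inequalities $\lambda_n(A(\Sigma))\ge\lambda_n(A(\Sigma'))$ at once.

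The delicate part is the strict inequality, since the borderline cases that matter (for instance $x_r\ne 0$ but $x_s=x_t$ in (i)) can make $X^T B X=0$, so the Rayleigh bound alone is not decisive. Here I would argue that if $\lambda_n(A(\Sigma))=\lambda_n(A(\Sigma'))$ then the chain $\lambda_n(A(\Sigma'))\le X^T A(\Sigma')X\le X^T A(\Sigma)X=\lambda_n$ collapses to a string of equalities, forcing $X$ to be a minimizing vector and hence an eigenvector of $A(\Sigma')$ for the same eigenvalue $\lambda_n$. Then $A(\Sigma)X=\lambda_n X=A(\Sigma')X$ gives $BX=0$. Reading off the nonzero rows of $B$ in case (i) yields $x_s-x_t=0$ (row $r$) and $x_r=0$ (rows $s,t$), and in case (ii) yields $x_s=x_r=x_u=x_t=0$ (rows $r,s,t,u$). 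Contrapositively, if $x_r\ne 0$ or $x_s\ne x_t$ in (i), or if any of $x_r,x_s,x_t,x_u$ is nonzero in (ii), then equality is impossible and the inequality is strict. I expect this eigenvector-rigidity step to be the only genuine obstacle; everything else is the bookkeeping of the four nonzero entries of $B$.
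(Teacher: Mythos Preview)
Your proposal is correct and essentially identical to the paper's own proof: the paper also bounds $\lambda_n(A(\Sigma))-\lambda_n(A(\Sigma'))\ge X^T(A(\Sigma)-A(\Sigma'))X$, computes this quadratic form as $4x_r(x_s-x_t)$ (resp.\ $4(x_rx_s-x_tx_u)$), and for the strict case argues that equality would force $X$ to be an eigenvector of $A(\Sigma')$ too, whence the eigenvalue equations at the affected vertices cannot simultaneously hold. Your formulation via $BX=0$ and its row readings is just a slightly more explicit packaging of the paper's phrase ``the eigenvalue equation cannot hold for $s$ (resp.\ $r$) in both signed graphs.''
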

\renewcommand\proofname{\bf{ Proof}}
\begin{proof} (i) Firstly, we have
\begin{align*}
\lambda_n(A(\Sigma))-\lambda_n(A(\Sigma'))&\geq X^T(A(\Sigma)-A(\Sigma'))X\\
\ &=4x_r(x_s-x_t)\\
\ &\geq0.
\end{align*}
If $\lambda_n(A(\Sigma))=\lambda_n(A(\Sigma'))$, then $X$ is also an eigenvector of $A(\Sigma')$ corresponding to $\lambda_n(A(\Sigma'))$. If $x_r\neq0$ (resp. $x_s\neq x_t$), then the eigenvalue equation cannot hold
for $s$ (resp. $r$) in both signed graphs, and we are done.

(ii) \begin{align*}
\lambda_n(A(\Sigma))-\lambda_n(A(\Sigma'))&\geq X^T(A(\Sigma)-A(\Sigma'))X\\
\ &=4(x_rx_s-x_tx_u)\\
\ &\geq0.
\end{align*}
If $\lambda_n(A(\Sigma))=\lambda_n(A(\Sigma'))$, then $X$ is also an eigenvector of $A(\Sigma')$ corresponding to $\lambda_n(A(\Sigma'))$. Without loss of generality, if $x_r\neq0$, then the eigenvalue equation cannot hold
for $s$ in both signed graphs, and we are done.
\end{proof}
$G[S]$ denotes the subgraph of $G$ induced by $S$, where $S\subset V(G)$.
\begin{lemma} \label{lm5.1} Let $\Sigma$ be an unbalanced signed complete graphs with order $n$ whose negative edges
form a spanning tree $T$ and minimizes the $\lambda_n(A(\Sigma))$, then the negative edges form $T_{1,n-3}$ or $\mathcal{T}_{a-1,b-1}$, where $a+b=n$.
\end{lemma}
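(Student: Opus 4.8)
The plan is to imitate the proof of Theorem~\ref{lm2.02}, but to drive the argument with the eigenvector-switching inequalities for the \emph{least} eigenvalue (Lemma~\ref{lm5.11}) instead of those for the index (Lemma~\ref{lm2.01}). Since $\Sigma$ is unbalanced we have $T\ncong K_{1,n-1}$ and hence $d(T)\ge 3$. I would fix a unit eigenvector $X=(x_1,\dots,x_n)^T$ for $\lambda_n(A(\Sigma))$ and relabel the vertices so that $x_1\le x_2\le\cdots\le x_n$. The engine is this: switching a positive (non-tree) edge against a negative (tree) edge so that the negative edges remain a spanning tree produces another signed complete graph in the same class, and by Lemma~\ref{lm5.11} such a switch can only weakly decrease $\lambda_n$, strictly whenever the relevant product or difference of eigenvector entries is nonzero. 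As $\Sigma$ is a minimizer, every strictly productive switch is forbidden, and it is exactly this rigidity that forces the shape of $T$. Here ``productive'' means: make the two ends of the chosen negative edge agree in sign and the two ends of the chosen positive edge disagree in sign.

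First I would dispose of the case in which $X$ is sign-definite, say $X\ge 0$; the case $X\le 0$ is identical after replacing $X$ by $-X$. Writing the eigenequation as $\lambda_n x_i=\sum_{j\in N^+(i)}x_j-\sum_{j\in N^-(i)}x_j$ with $\lambda_n<0$ forces, at every vertex of positive weight, the small set of negative (tree) neighbours to outweigh the large set of positive non-neighbours; together with a handful of switches of the two types in Lemma~\ref{lm5.11} this pushes all negative edges to be incident to one vertex $u$ of large tree-degree and makes $X$ constant on the pendants at $u$. Substituting back into the eigenequation then either contradicts $n\ge 6$ (the support of $X$ becomes too small, exactly as in the positivity claims in the proof of Theorem~\ref{lm2.02}) or leaves only $T\cong T_{1,n-3}$.

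Next I would treat the case in which $X$ has both signs. Put $V_+=\{v:x_v\ge 0\}$ and $V_-=\{v:x_v<0\}$, both nonempty, and order the vertices as in Subcase~2.1 of the proof of Theorem~\ref{lm2.02}, with $u_1$ carrying the largest positive entry. Running the productive switches as in that subcase, but with the inequalities reversed according to Lemma~\ref{lm5.11}, I expect to obtain: (a) the negative tree restricted to $V_+$, and likewise to $V_-$, is a disjoint union of stars centred at the extreme vertices; (b) $u_1$ is adjacent in $T$ to an initial block $\{v_1,\dots,v_s\}$ of the most negative vertices, and symmetrically the most negative vertex is adjacent to a block of the largest positive vertices; (c) the two $2\times 2$ eigenequation systems occurring in Subcase~2.1 show that the pendants on each side carry equal weights and the two hubs have equal degree within their stars. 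After these reductions $T$ is a double broom whose two hubs are joined by a path, and a final analysis of the eigenequation along that path shows its length is $1$ or $3$: length $\ge 4$ is killed by one further shortening switch, and length $2$ by a short computation yielding a contradiction with minimality, much as the $\lambda_1=3$ computation in Subcase~2.1. In the length-$1$ case the switching additionally collapses one side to a single pendant, giving $T\cong T_{1,n-3}$, while the length-$3$ case gives $T\cong\mathcal{T}_{a-1,b-1}$ with $a+b=n$. The boundary sub-cases $|V_+|=1$ or $|V_-|=1$ are handled as in Subcase~2.2 of Theorem~\ref{lm2.02} and produce only $T_{1,n-3}$.

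I expect the main obstacle to be steps (b) and (c) of the mixed-sign case. For the index problem a single extreme vertex essentially absorbs the whole graph, so one hub suffices; for $\lambda_n$ the extremal configuration is genuinely two-ended, so the two hubs together with the connecting path must be controlled at once, and one must read off the degenerate equality cases of Lemma~\ref{lm5.11} carefully enough to see that both a single-pendant double star and \emph{every} double broom with internal path of length $3$ survive, whereas intermediate double stars, as well as longer brooms and the length-$2$ broom, do not. Getting this dichotomy right --- rather than a single extremal tree --- is the delicate point, and it is precisely what makes Lemma~\ref{lm5.1} a reduction step, with the comparison among the surviving candidates postponed to Theorem~\ref{thm5.3}.
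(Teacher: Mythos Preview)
Your overall plan---mirror the proof of Theorem~\ref{lm2.02} with Lemma~\ref{lm5.11} in place of Lemma~\ref{lm2.01}---is exactly what the paper does, and your handling of the sign-definite case and of the boundary sub-cases $|V_+|=1$ or $|V_-|=1$ is in the right spirit. The genuine gap is your step~(b) in the mixed-sign case: you have the direction of the crossing-edge analysis backwards.

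For $\lambda_n$, a negative (tree) edge $uv$ contributes $-2x_ux_v$ to $X^TA(\Sigma)X$; to make this very negative you want $x_ux_v$ large and positive, so tree edges should lie \emph{within} $V_+$ or within $V_-$. A tree edge $u_iv_j$ with $u_i\in V_+$, $v_j\in V_-$ has $x_{u_i}x_{v_j}\le 0$, which is unfavourable; the productive switches of Lemma~\ref{lm5.11} therefore \emph{remove} crossing tree edges rather than concentrate them at $u_1$. Your claim that $u_1$ becomes tree-adjacent to a block $\{v_1,\dots,v_s\}\subset V_-$ cannot be established---the switches go the other way. This is precisely the point at which the $\lambda_n$ argument diverges from the $\lambda_1$ argument, not merely by reversing inequalities but by reversing which edges are pushed across the cut.

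The paper's key step (its Claim~\ref{claim5.5}) is that exactly \emph{one} tree edge crosses between $V_+$ and $V_-$, and that it joins the two mildest vertices $u_a$ and $v_b$ (those of smallest absolute weight on each side). Once this is in hand, connectivity forces $T[V_+]$ and $T[V_-]$ each to be a tree on its part, and the Case~1 switching makes each a star centred at the extreme vertex. The single crossing edge then joins a pendant of each star, so $T\cong\mathcal{T}_{a-1,b-1}$ automatically: the path between the two hubs has length exactly~$3$ by construction, and no separate ``determine the path length'' analysis is needed. Your proposed dichotomy ``length $1$ or $3$'' and the elimination of lengths $2$ and $\ge 4$ are neither needed nor obtainable from the switching; the outcome $T_{1,n-3}$ arises only from the sign-definite case and from $a=1$ or $b=1$, not as a ``length~$1$'' double broom.
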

\begin{proof}
Let $X=(x_1,x_2,\ldots,x_n)^T$ be an unit eigenvector corresponding to $\lambda_n(A(\Sigma))$. Since $\Sigma$ is an unbalanced signed complete graphs, $T\ncong K_{1,n-1}$ and $d(T)>2$. And $\lambda_n(A(\Sigma))\leq\lambda_n(A((K_4,P^-_4)))<-2$ by Lemma \ref{lm1}.
By arranging the vertices of $\Sigma$ appropriately, we can assume that $V(\Sigma)=\{v_1,v_2,\ldots,v_n\}$ such that $x_1\leq x_2\leq \cdots\leq x_n$.

\noindent{\bf{$\underline{\mbox{Case 1. }}$}} $X$ is a nonnegative eigenvector. Obviously, $x_n>0$ since $X\neq\mathbf{0}$.
\begin{claim}\label{claim5.1}
$d_T(v_n)\geq2$.
\end{claim}
Otherwise, suppose that $v_tv_n\in E(T)$, then $d_T(v_t)\geq2$ since $d(T)\geq3$. If $x_t=x_n$, then we just need to exchange the subscripts $v_t$ and $v_n$, as desired. If $x_t<x_n$, then we can construct a new signed graph $\Sigma'$ by reversing the sign of the positive edge $v_nw$ and the negative edge $v_tw$ for any $w\in N_T(v_t)\setminus\{v_n\}$ whose negative edges also form a spanning tree such that $\lambda_n(A(\Sigma))>\lambda_n(A(\Sigma'))$ by Lemma \ref{lm5.11}, a contradiction.

Let $x_p=\mbox{min}_{w\in N_T(v_n)}x_w$, $x_q=\mbox{max}_{w\in N_T(v_n)}x_w$.
\begin{claim}\label{claim5.2}
$q=n-1.$
\end{claim}
Otherwise, if $x_q=x_{n-1}$, then we just need to exchange the subscripts $v_q$ and $v_{n-1}$, as desired. Next, let $0\leq x_q<x_{n-1}\leq x_n$ and $v_{n-1}v_n\notin E(T)$. Note that there exists an unique path $P$ between $v_q$ and $v_{n-1}$ in $T$. If $v_n\notin V(P)$, then a new signed graph $\Sigma'$ can be obtained from $\Sigma$ by reversing the sign of the positive edge $v_{n-1}v_n$ and the negative edge $v_qv_n$ whose negative edges also form a spanning tree such that $\lambda_n(A(\Sigma))>\lambda_n(A(\Sigma'))$ by Lemma \ref{lm5.11}, a contradiction.
If $v_n\in V(P)$, it means that we can find another vertex $v_t$ such that $P=v_{n-1}\cdots v_tv_nv_q$, thus $t<q<n-1$ and $0\leq x_t\leq x_q<x_{n-1}\leq x_n$ clearly. Then a new signed graph $\Sigma'$ can be obtained from $\Sigma$ by reversing the sign of the positive edge $v_{n-1}v_n$ and the negative edge $v_tv_n$ whose negative edges also form a spanning tree such that $\lambda_n(A(\Sigma))>\lambda_n(A(\Sigma'))$ by Lemma \ref{lm5.11}, a contradiction.
\begin{claim}\label{claim5.3}
The subscripts of the neighbourhoods of $v_n$ in $T$ must be continuous.
\end{claim}
Otherwise, suppose that $t$ is the maximum subscript such that $v_tv_n\notin E(T)$ and $p<t<n-1$. If $x_t=x_p$, then we just need to exchange the subscripts $v_p$ and $v_t$, as desired. Next, let $0\leq x_p<x_t\leq x_{n-1}\leq x_n$. Since $T$ is connected, there exists an unique path $P$ between $v_p$ and $v_t$ in $T$. We first notice that $v_{n-1}\in V(P)$ and $v_n\notin V(P)$ is impossible, otherwise, there exists a cycle which contains $v_p,v_{n-1},v_n$ in $T$, a contradiction. If $\{v_{n-1},v_n\}\nsubseteq V(P)$, then a new signed graph $\Sigma'$ can be obtained from $\Sigma$ by reversing the sign of the positive edge $v_tv_n$ and the negative edge $v_pv_n$ whose negative edges also form a spanning tree such that $\lambda_n(A(\Sigma))>\lambda_n(A(\Sigma'))$ by Lemma \ref{lm5.11}, a contradiction.
If $\{v_{n-1},v_n\}\subset V(P)$ and $v_tv_{n-1}\in E(T)$, then a new signed graph $\Sigma'$ can be obtained from $\Sigma$ by reversing the sign of the positive edge $v_tv_n$ and the negative edge $v_tv_{n-1}$ whose negative edges also form a spanning tree such that $\lambda_n(A(\Sigma))>\lambda_n(A(\Sigma'))$ by Lemma \ref{lm5.11}, a contradiction.
 Suppose that $\{v_{n-1},v_n\}\subset V(P)$ but $v_tv_{n-1}\notin E(T)$, this means we can find a vertex $v_s$ such that $v_sv_{n-1}\in E(T)$, so $s<p$. Then a new signed graph $\Sigma'$ can be obtained from $\Sigma$ by reversing the sign of the positive edge $v_tv_n$ and the negative edge $v_sv_{n-1}$ whose negative edges also form a spanning tree such that $\lambda_n(A(\Sigma))>\lambda_n(A(\Sigma'))$ by Lemma \ref{lm5.11}, a contradiction.

Now, there is only one case, i.e., $v_{n-1}\notin V(P)$ and $v_n\in V(P)$, it means that there is a vertex $v_s$ such that $v_sv_n\in E(T)$ and $v_s\in V(P)$. If $v_tv_s\in E(T)$, then a new signed graph $\Sigma'$ can be obtained from $\Sigma$ by reversing the sign of the positive edge $v_tv_n$ and the negative edge $v_sv_n$ whose negative edges also form a spanning tree such that $\lambda_n(A(\Sigma))>\lambda_n(A(\Sigma'))$ by Lemma \ref{lm5.11}, a contradiction.
If $v_tv_s\notin E(T)$, then
there is a vertex $v_r$ such that $v_rv_s\in E(T)$ and $v_r\in V(P)$, and so $r<p$. Then a new signed graph $\Sigma'$ can be obtained from $\Sigma$ by reversing the sign of the positive edge $v_tv_n$ and the negative edge $v_rv_n$ whose negative edges also form a spanning tree such that $\lambda_n(A(\Sigma))>\lambda_n(A(\Sigma'))$ by Lemma \ref{lm5.11}, a contradiction.
\begin{claim}\label{claim5.4}
$x_1>0$ and $p=2$.
\end{claim}
Since $d(T)>2$, $d_T(v_n)\leq n-2$. Let $x_1=0$. If there exists a vertex $v_t$ such that $x_t>0$ and $t<p$, then we can find a vertex $v_s\in N_T(v_n)$ such that $v_s$ belongs to the unique path $P$ between $v_t$ and $v_n$. If $v_tv_s\in E(T)$, then a new signed graph $\Sigma'$ can be obtained from $\Sigma$ by reversing the sign of the positive edge $v_tv_n$ and the negative edge $v_tv_s$ whose negative edges also form a spanning tree such that $\lambda_n(A(\Sigma))>\lambda_n(A(\Sigma'))$ by Lemma \ref{lm5.11}, a contradiction.
If $v_tv_s\notin E(T)$, then there exists a vertex $v_r\in V(P)$ such that $v_rv_t\in E(T)$ and a new signed graph $\Sigma'$ can be obtained from $\Sigma$ by reversing the sign of the positive edge $v_tv_n$ and the negative edge $v_tv_r$ whose negative edges also form a spanning tree such that $\lambda_n(A(\Sigma))>\lambda_n(A(\Sigma'))$ by Lemma \ref{lm5.11}, a contradiction.
Thus, $v_i\in N_T(v_n)$ if $x_i>0$.

Let $s<t<n$ and $x_s=0<x_t\leq x_n$, then
$$\lambda_n(A(\Sigma))x_s=\sum\limits_{i=p,i\neq t}^nx_i-x_t,$$
it leads to $x_t=\sum\limits_{i=p,i\neq t}^nx_i$. And $\lambda_n(A(\Sigma))x_n=-\sum\limits_{i=p}^nx_i=-2x_t$, thus $x_t>x_n$ since $\lambda_n(A(\Sigma))<-2$, it is impossible. Therefore, $x_1>0$ and $p=2$.
\begin{claim}\label{claim333}
$v_1v_{n-1}\in E(T)$.
\end{claim}
Otherwise, suppose that $v_1v_s\in E(T)$ and $s<n-1$, then a new signed graph $\Sigma'$ can be obtained from $\Sigma$ by reversing the sign of the positive edge $v_1v_{n-1}$ and the negative edge $v_1v_s$ whose negative edges also form a spanning tree such that $\lambda_n(A(\Sigma))>\lambda_n(A(\Sigma'))$ by Lemma \ref{lm5.11}, a contradiction.

Obviously, $T\cong T_{1,n-3}$ by the above claims. The proof of Case 1 also shows that $X>0$ if $X$ nonnegative. And we know that $-X$ must be an eigenvector of $\Sigma$ if $X$ is an eigenvector. Thus the case that $X$ is a nonpositive eigenvector is not considered independently.

\noindent{\bf{$\underline{\mbox{Case 2. }}$}} $X$ is neither nonpositive nor nonnegative. Let $V_+=\{v_i|x_{v_i}\geq0\}$ and $V_-=\{v_i|x_{v_i}<0\}$. Clearly, $V_+,V_-\neq\emptyset$ and there must exist a vertex $v$ such that  $x_v>0$.
Let $\mid V_+\mid=a$, $\mid V_-\mid=b$ and $x_1\leq\ldots\leq x_b<0\leq x_{b+1}\leq\ldots\leq x_n$. For convenience, set $u_i=v_{n-(i-1)}$ and $y_i=x_{n-(i-1)}$ for $1\leq i\leq a$, then $x_1\leq\ldots\leq x_b<0\leq y_a\leq\ldots\leq y_1$.
\begin{claim}\label{claim5.5}
There exists an unique one negative edge $e=u_av_b$ between $V_+$ and $V_-$.
\end{claim}
Otherwise, let $e_1=u_sv_k$ and $e_2=u_tv_l$ be two negative edges between $V_+$ and $V_-$. If $e_1\cap e_2\neq\emptyset$, without loss of generality, let $u_s=e_1\cap e_2$, then constructing a new signed graph $\Sigma'$ from $\Sigma$ by reversing the sign of the positive edge $v_kv_l$ and the negative edge $u_sv_k$ whose negative edges also form a spanning tree, and $\lambda_n(A(\Sigma))>\lambda_n(A(\Sigma'))$ by Lemma \ref{lm5.11}, a contradiction. Let $e_1\cap e_2=\emptyset$ next. Since there is only one path containing the four vertices, we can find two of them as the end vertices of the path. Without loss of generality, let $v_k,v_l$ be the vertices we found, and $v_kv_l\notin E(T)$, then constructing a new signed graph $\Sigma'$ from $\Sigma$ by reversing the sign of the positive edge $v_kv_l$ and the negative edge $u_sv_k$ whose negative edges also form a spanning tree, and $\lambda_n(A(\Sigma))>\lambda_n(A(\Sigma'))$ by Lemma \ref{lm5.11}, a contradiction. Thus, there exists only one negative edge $e=u_sv_t$ between $V_+$ and $V_-$. Suppose that $s=a$ but $t\neq b$. If $x_t=x_b$, we just need to exchange the subscripts $v_t$ and $v_b$, as desired. If $x_t<x_b$, then constructing a new signed graph $\Sigma'$ from $\Sigma$ by reversing the sign of the positive edge $u_av_b$ and the negative edge $u_av_t$ whose negative edges also form a spanning tree, and $\lambda_n(A(\Sigma))>\lambda_n(A(\Sigma'))$ by Lemma \ref{lm5.11}, a contradiction. We can get the similar contradictions when $s\neq a$ but $t=b$ or $s\neq a$ and $t\neq b$ by Lemma \ref{lm5.11}. And we are done.

By the similar discussion as Case 1 on $V_+$ and $V_-$, the following claim holds immediately.
\begin{claim}\label{claim5.6}
$T[V_+]=K_{1,a-1}$ with $d_T(u_1)=a-1$ and $T[V_1]=K_{1,b-1}$ with $d_T(v_b)=b-1$.
\end{claim}

By Claims \ref{claim5.5} and \ref{claim5.6}, $T\cong T_{1,n-3}$ if $a=1$ or $b=1$ and $T\cong \mathcal{T}_{a-1,b-1}$ if $a,b\geq2$. This completes the proof.
\end{proof}

\begin{lemma} \label{lm5.2} Let $\Sigma=(K_n,\mathcal{T}^-_{a-1,b-1})$ with order $n=a+b\geq6$, where $2\leq b\leq \lfloor \frac{n}{2}\rfloor$. Then
\begin{align*}
P_{D^{\pm}(\Sigma)}(\lambda)&= (\lambda+1)^{n-6}(\lambda-1)({\lambda}^{5}+ \left( 9-n \right) {\lambda}^{4}+\left( 34-6\,n
 \right) {\lambda}^{3}\\
\ &+ \left( 8\,ab-24\,n+58 \right) {\lambda}^{2}+
 \left( 16\,ab-26\,n-3 \right) \lambda-24\,ab+89\,n-323).
\end{align*}
Furthermore, if $a=\lceil\frac{n}{2}\rceil$ and $b=\lfloor \frac{n}{2}\rfloor$, then $\lambda_n(A(\Sigma))>2-n$ for $n\geq6$ and $\lambda_n(A(\Sigma))<-4$ for $n\geq8$.
\end{lemma}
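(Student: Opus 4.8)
To obtain $P_{D^{\pm}(\Sigma)}(\lambda)$ I would exploit that for a signed complete graph $A(\Sigma)=J_{n}-I_{n}-2N$, where $N$ is the $(0,1)$-adjacency matrix of the spanning tree $\mathcal{T}_{a-1,b-1}$. This tree has two hubs $c_{1},c_{2}$ of degrees $a-1$ and $b-1$, the central path $c_{1}p_{1}p_{2}c_{2}$ (so $p_{1},p_{2}$ have degree $2$), a set $L_{1}$ of $a-2$ leaves at $c_{1}$, and a set $L_{2}$ of $b-2$ leaves at $c_{2}$, and this six-cell partition of $V(\Sigma)$ is equitable for $A(\Sigma)$. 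Any vector supported on $L_{1}$ or on $L_{2}$ with zero coordinate sum lies in $\ker N$, hence is an $A(\Sigma)$-eigenvector for $-1$; such vectors span a space of dimension $(a-3)+(b-3)=n-6$, which yields the factor $(\lambda+1)^{n-6}$, and the remaining six eigenvalues are those of the $6\times 6$ quotient matrix $B$, so that $P_{D^{\pm}(\Sigma)}(\lambda)=(\lambda+1)^{n-6}\det(\lambda I_{6}-B)$. Expanding $\det(\lambda I_{6}-B)$ by elimination along the near-tridiagonal rows of the spine $c_{1}p_{1}p_{2}c_{2}$ splits off a factor $\lambda-1$ and leaves the asserted quintic, whose coefficients depend on $a,b$ only through $n=a+b$ and $ab$. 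A cleaner variant of the same computation is to write $P_{D^{\pm}(\Sigma)}(\lambda)=\det\!\big((\lambda+1)I_{n}+2N-\mathbf{1}\mathbf{1}^{T}\big)$ and apply the matrix determinant lemma: $\mathcal{T}_{a-1,b-1}$ has matching number $3$, hence nullity $n-6$, which produces the factor $(\lambda+1)^{n-6}$, while the scalar $1-\mathbf{1}^{T}\big((\lambda+1)I_{n}+2N\big)^{-1}\mathbf{1}$ is found by solving the six-variable system $\big((\lambda+1)I_{n}+2N\big)y=\mathbf{1}$ on the six orbits and clearing denominators; either route gives the stated factorisation (the degenerate cases $a=2$ or $b=2$ being covered by the same determinant identity, which is polynomial in $a,b$).

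For the two bounds I specialise to $a=\lceil n/2\rceil$, $b=\lfloor n/2\rfloor$, so $ab=\lfloor n^{2}/4\rfloor$, substituting $8ab=2n^{2}$ or $8ab=2n^{2}-2$ according to the parity of $n$. By the formula above, $\lambda_{n}(A(\Sigma))=\min\{-1,\;1,\;\text{least root of the quintic factor }f_{n}\}$, and since each target inequality places $\lambda_{n}$ below $-1$, it is enough to control the least root of $f_{n}$. To get $\lambda_{n}(A(\Sigma))<-4$ for $n\ge 8$, I would simply evaluate $f_{n}(-4)$, which after the substitution is an explicit polynomial in $n$, and check that it is positive for all $n\ge 8$; since $f_{n}(\lambda)\to-\infty$ as $\lambda\to-\infty$, a positive value at $-4$ forces $f_{n}$ to have a root strictly below $-4$, so $\lambda_{n}(A(\Sigma))<-4$. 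To get $\lambda_{n}(A(\Sigma))>2-n$ for $n\ge 6$, I would bypass the quintic: from $A(\Sigma)=J_{n}-I_{n}-2N\succeq -I_{n}-2N$ (valid since $J_{n}\succeq 0$), Weyl monotonicity gives $\lambda_{n}(A(\Sigma))\ge -1-2\lambda_{1}(N)$, and this is strict because the positive Perron eigenvector of the connected graph $N$ cannot be orthogonal to $\mathbf{1}$; moreover $\lambda_{1}(N)^{2}=\lambda_{1}(N^{2})\le\max_{v}\sum_{u\sim v}d_{N}(u)=\lceil n/2\rceil+1$ for $\mathcal{T}_{a-1,b-1}$. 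Hence $\lambda_{n}(A(\Sigma))>-1-2\sqrt{\lceil n/2\rceil+1}\ge 2-n$ whenever $(n-3)^{2}\ge 4\big(\lceil n/2\rceil+1\big)$, i.e.\ for every $n\ge 8$; the two remaining orders $n=6$ and $n=7$ are handled by direct inspection of $f_{6}$ and $f_{7}$ (one checks $f_{n}<0$ on all of $(-\infty,2-n]$, so $f_{n}$ has no root $\le 2-n$).

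The main obstacle is the determinant computation in the first paragraph. Because the two halves of $\mathcal{T}_{a-1,b-1}$ have different sizes ($a\ne b$), the $6\times 6$ quotient matrix --- equivalently the six-variable linear system --- is not symmetric, so one must push the parameters $a-2$ and $b-2$ through the whole elimination and then verify that the result collapses onto $(\lambda+1)^{n-6}(\lambda-1)$ times a quintic whose coefficients are linear in $ab$ and $n$. With that identity in hand, the second part is routine: $\lambda_{n}(A(\Sigma))<-4$ is a single sign check on $f_{n}(-4)$, and $\lambda_{n}(A(\Sigma))>2-n$ follows from the elementary spectral lower bound above except at $n\in\{6,7\}$, where that bound is slightly too weak and one falls back on the explicit quintic.
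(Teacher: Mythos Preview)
Your computation of the characteristic polynomial via the six-cell equitable partition is exactly what the paper does (it writes out the block form of $D^{\pm}(\Sigma)$ and reduces), and your argument for $\lambda_{n}<-4$ by checking the sign of the quintic at $-4$ also coincides with the paper's, except that the paper first notices that for even $n$ the quintic factors further as $h_{1}(\lambda)h_{2}(\lambda)$ with $h_{2}$ cubic, which lightens the arithmetic.

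The real divergence is in the lower bound $\lambda_{n}>2-n$. The paper stays with the explicit polynomial throughout: it evaluates the quintic (or its cubic factor $h_{2}$) at $2-n$, checks that the value is negative, and then uses the derivative to show the polynomial is monotone increasing on $(-\infty,2-n]$, so no root can lie there. Your route via $A(\Sigma)=J-I-2N\succeq -I-2N$, Weyl, and the row-sum bound $\lambda_{1}(N)^{2}\le\max_{v}\sum_{u\sim v}d_{N}(u)=\lceil n/2\rceil+1$ is more conceptual and avoids any polynomial manipulation for $n\ge 8$, at the price of falling back on the explicit $f_{6},f_{7}$ for the two smallest orders. The paper's argument is uniform in $n$ but computation-heavy; yours isolates the asymptotic behaviour cleanly and makes transparent \emph{why} $\lambda_{n}$ cannot be as small as $2-n$ (the negative tree $\mathcal{T}_{a-1,b-1}$ simply does not have large enough spectral radius), but needs a patch at the bottom. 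Both are sound.

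One minor caution on the alternative derivation you sketch: the claim ``$\mathcal{T}_{a-1,b-1}$ has matching number $3$, hence nullity $n-6$'' fails when $b=2$ (the matching number drops to $2$). You correctly flag these degenerate cases and appeal to the polynomial-identity principle, but the equitable-partition route---which the paper uses---handles them uniformly without that extra step.
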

\begin{proof}
$$D^{\pm}(\Sigma)=\left(
                    \begin{array}{cccccc}
                      0 & \mathbf{1}_{a-2} & -1 & -1 & \mathbf{1}_{b-2} & 1 \\
                      \mathbf{1}^T_{a-2} & J_{a-2}-I_{a-2} & -\mathbf{1}^T_{a-2} & \mathbf{1}^T_{a-2} & J_{(a-2)\times(b-2)} & \mathbf{1}^T_{a-2} \\
                      -1 & -\mathbf{1}_{a-2} & 0 & 1 & \mathbf{1}_{b-2} & 1 \\
                      -1 & \mathbf{1}_{a-2} & 1 & 0 & \mathbf{1}_{b-2} & -1 \\
                      \mathbf{1}^T_{b-2} & J_{(b-2)\times(a-2)} &  \mathbf{1}^T_{b-2} &  \mathbf{1}^T_{b-2} & J_{b-2}-I_{b-2} & -\mathbf{1}^T_{b-2} \\
                      1 & \mathbf{1}_{a-2} & 1 & -1 & -\mathbf{1}_{b-2} & 0 \\
                    \end{array}
                  \right)
,$$
then $det(\lambda I-D^{\pm}(\Sigma))=(\lambda+1)^{n-6}(\lambda-1)f_{a,b}(\lambda)$, where
$f_{a,b}(\lambda)={\lambda}^{5}+ \left( 7-n \right) {\lambda}^{4}+ \left( 22-6\,n
 \right) {\lambda}^{3}\\+ \left( 8\,ab-16\,n+18 \right) {\lambda}^{2}+
 \left( 16\,ab-10\,n-39 \right) \lambda-24\,ab+65\,n-169.$
Set $\alpha=\lambda_n(A(\Sigma))$. Since $f_{a,b}(-3)=32n-160>0$ for $n\geq6$, $\alpha<-3$. Note that $f_{a+1,b-1}(\lambda)-f_{a,b}(\lambda)=-8\, \left( \lambda+3 \right)  \left( \lambda-1 \right)\\  \left( a-b+1 \right)$, so
$f_{a+1,b-1}(\alpha)-f_{a,b}(\alpha)<0$ for $n\geq6$. It follows that $\alpha$ decreases as $b$ increases. If $n$ is even, then $\alpha$ is minimum when $a=b=\frac{n}{2}$ and $f_{n/2,n/2}(\lambda)=h_1(\lambda)h_2(\lambda),$ where $h_1(\lambda)={\lambda}^{2}+ \left( 6-n \right) \lambda-3\,n+13$ and $h_2(\lambda)={\lambda}^{3}+{\lambda}^{2}+ \left( 3-2\,n \right) \lambda+2\,n-13.$ The least eigenvalue of $h_1(\lambda)$ is $n/2-3-1/2\,\sqrt {{n}^{2}-16}>-3$, thus $\alpha$ must be the least eigenvalue of $h_2(\lambda)$.
We also find that $h_2(-4)=10n-73>0$ for $n\geq8$. Thus, we have $\alpha<-4$ for $n\geq8$. Note that $h_2(2-n)=-(n-5)(n^2-4n+1)<0$ obviously and $h'_2(\lambda)=3\lambda^2+2\lambda-2n+3$ with two stagnation point $-\frac{1\pm\sqrt{6n-8}}{3}>2-n$, thus  $\alpha>2-n$ for $n\geq6$.
If $n$ is odd, then $a=\frac{n+1}{2}$ and $b=\frac{n-1}{2}$. Using the similar method, we can also obtained $f_{a,b}(-4)=10n^2-23n-375>0$ for $n\geq8$, $f_{a,b}(2-n)=-(n-5)(n-3)(2n^3-17n^2+40n-9)<0$ and $f_{a,b}(\lambda)$ is strictly monotone increasing when $\lambda<2-n$, thus the result is still valid.
\end{proof}

\begin{remark}\label{rek5.1}
Since $\Sigma=(K_n, T^-_{1,n-3})$ is unbalanced, $\lambda_n(A(\Sigma))$ must be an eigenvalue of $g(\lambda)$ by Remark \ref{rek5.11}. Note that $g'(\lambda)=\left( \lambda+1 \right)  \left( 3\,\lambda-2\,n+3 \right) $ and $g'(\lambda)>0$ when $\lambda<-1$, and so $g(\lambda)$ is strictly monotone increasing when $\lambda<-1$. Combining this with $g(-4)=-n-51<0$, thus $\lambda_n(A(\Sigma))>-4$. If $n=6,7$, we can get $\lambda_n(A((K_n, \mathcal{T}^-_{\lceil\frac{n}{2}\rceil-1,\lfloor \frac{n}{2}\rfloor-1})))<\lambda_n(A((K_n, T^-_{1,n-3})))$ by simple calculation. If $n\geq8$, then $\lambda_n(A((K_n, \mathcal{T}^-_{\lceil\frac{n}{2}\rceil-1,\lfloor \frac{n}{2}\rfloor-1})))<-4<\lambda_n(A((K_n, T^-_{1,n-3})))$.
\end{remark}

By Lemmas \ref{lm5.1} and \ref{lm5.2} and Remark \ref{rek5.1}, we can get the Theorem \ref{thm5.3} immediately.

\section{Proof of Theorem \ref{lem1.11}}\label{sec2}
For the sake of convenience, $\sigma_{ij}$ for short $\sigma_{max}(v_i,v_j)$ and $\sigma'_{ij}$ for short $\sigma_{min}(v_i,v_j)$.
\begin{lemma}\label{lem1.12}
 Let $\Sigma=(S^+_{2,n-2},\sigma)$ be a connected signed graph and $D^{max}$ and $D^{min}$ be its signed distance matrices. If all triangles have the same sign, then $D^{max}(\Sigma)=D^{min}(\Sigma)=D^{\pm}(\Sigma)$ and
 \begin{enumerate}
   \item[(1)] $P_{D^{\pm}(\Sigma)}(\lambda)=(\lambda+2)^{n-5}\left( {\lambda}^{2}+4\,\lambda+2 \right)({\lambda}^{3}+(-2n+6)\,{\lambda}^{2}-(2n+6)\,\lambda+2n-20)$ when \\$\sigma_{23}\sigma_{35}\sigma_{25}=1$ and $\lambda_n(D^{\pm}((S^+_{2,n-2},\sigma)))=-2-\sqrt{2}$;
   \item[(2)] $P_{D^{\pm}(\Sigma)}(\lambda)=(\lambda+2)^{n-5}\left( {\lambda}^{2}-4\,\lambda-6 \right))({\lambda}^{3}+(-2n+14)\,{\lambda}^{2}-(18n-8)\,\lambda+8n-100)$ when $\sigma_{23}\sigma_{35}\sigma_{25}=-1$ and
       $\lambda_n(D^{\pm}((S^+_{2,n-2},\sigma)))=-2-\sqrt{2}$ if and only if $\Sigma=(P_4,\sigma)$.
 \end{enumerate}
\end{lemma}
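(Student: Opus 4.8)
Throughout I label $S^+_{2,n-2}$ as in Figure~1: $v_2,v_3$ are the two vertices of degree $n-2$, $v_1$ and $v_4$ are their respective pendant neighbours, and $v_5,\dots,v_n$ are their $n-4$ common neighbours. The plan is: (i) check compatibility; (ii) reduce to two canonical signatures by switching; (iii) compute $P_{D^{\pm}}$ via an equitable partition together with the reflection automorphism $g=(v_1v_4)(v_2v_3)$; (iv) locate the least eigenvalue in each of the two cases.

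First, the only pairs joined by more than one geodesic are the pairs $\{v_i,v_j\}$ with $5\le i<j\le n$, whose two geodesics are $v_iv_2v_j$ and $v_iv_3v_j$. Writing $s=\sigma(v_2v_3)\sigma(v_2v_k)\sigma(v_3v_k)$ for the common sign of all triangles $v_2v_3v_k$, the product of the signs of these two geodesics is $(\sigma(v_2v_i)\sigma(v_3v_i))(\sigma(v_2v_j)\sigma(v_3v_j))=(s\,\sigma(v_2v_3))^2=1$, so the two path-signs are equal and the pair is compatible; every other pair has a unique geodesic. Hence $D^{max}(\Sigma)=D^{min}(\Sigma)=D^{\pm}(\Sigma)$. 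Since the $D^{\pm}$-spectrum is a switching invariant and the triangles $v_2v_3v_k$ $(5\le k\le n)$ form a basis of the cycle space of $S^+_{2,n-2}$, every signature with all triangle-signs equal to $s$ is switching-equivalent to a fixed representative: I take the all-positive signature when $s=1$, and the signature whose only negative edge is $v_2v_3$ when $s=-1$. For both representatives $g$ preserves the signature, so $g$ acts on the eigenspaces of $D^{\pm}$.

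With respect to the partition $\{v_1\},\{v_2\},\{v_3\},\{v_4\},\{v_5,\dots,v_n\}$ the matrix $D^{\pm}$ is equitable; any vector supported on $\{v_5,\dots,v_n\}$ with entries summing to $0$ satisfies $D^{\pm}x=-2x$, giving the factor $(\lambda+2)^{n-5}$, while on the complement $D^{\pm}$ is represented by a $5\times5$ quotient matrix $B=B(n)$, so $P_{D^{\pm}(\Sigma)}(\lambda)=(\lambda+2)^{n-5}\det(\lambda I-B)$. The automorphism $g$ commutes with $B$ and splits its space into the $3$-dimensional subspace $x_{v_1}=x_{v_4},\ x_{v_2}=x_{v_3}$ and the $2$-dimensional subspace $x_{v_1}=-x_{v_4},\ x_{v_2}=-x_{v_3},\ x_{v_i}=0$, so $\det(\lambda I-B)$ factors as a cubic times a quadratic. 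A direct $2\times2$ computation on the antisymmetric block gives the quadratic $\lambda^2+4\lambda+2$ when $s=1$ and $\lambda^2-4\lambda-6$ when $s=-1$; expanding the $3\times3$ symmetric block (after reading off the entries of $B$ from the distances and signs) yields the stated cubic factor in each case, and I would confirm these expansions against $n=5,6$ by direct computation. For $n=4$ the top set is empty, $S^+_{2,2}=P_4$, and $D^{\pm}(P_4,\sigma)$ is switching-equivalent to the ordinary distance matrix of $P_4$, whose spectrum is $\{2\pm\sqrt{10},\,-2\pm\sqrt2\}$.

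It remains to identify $\lambda_n(D^{\pm})$. For $s=1$: the quadratic factor contributes $-2-\sqrt2$; the cubic factor $\lambda^3+(6-2n)\lambda^2-(2n+6)\lambda+2n-20$ takes the value $8-2n<0$ at $\lambda=-2$, and its derivative is positive on $(-\infty,-2]$ (an elementary discriminant estimate), so the cubic is negative there and all three of its roots exceed $-2>-2-\sqrt2$; together with the eigenvalue $-2$ this gives $\lambda_n(D^{\pm})=-2-\sqrt2$. For $s=-1$: the quadratic contributes $2-\sqrt{10}>-2-\sqrt2$ and the eigenvalue $-2>-2-\sqrt2$, while substituting $\lambda=-2-\sqrt2$ into the cubic factor $c_n$ gives $c_n(-2-\sqrt2)=(n-4)(14+10\sqrt2)$, which is $0$ for $n=4$ and positive for $n\ge5$; since $c_n$ is monic it therefore has a root $\le-2-\sqrt2$, equal to $-2-\sqrt2$ precisely when $n=4$. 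Hence $\lambda_n(D^{\pm})\le-2-\sqrt2$ with equality if and only if $\Sigma=(P_4,\sigma)$. The substantive steps are the two determinant expansions producing the cubic factors (the place where arithmetic errors are most likely, hence the small-case cross-check) and the sign estimate $c_n(-2-\sqrt2)=(n-4)(14+10\sqrt2)$; everything else is bookkeeping.
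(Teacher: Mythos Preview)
Your argument is correct and runs parallel to the paper's proof: both reduce $\det(\lambda I-D^{\pm})$ to $(\lambda+2)^{n-5}$ times a $5\times5$ determinant, factor this as (quadratic)$\times$(cubic), and then locate $\lambda_n$ by evaluating the cubic at $-2-\sqrt2$ (case $s=-1$) or by showing its roots lie above $-2$ (case $s=1$).

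The differences are in presentation rather than substance. You add two clean structural observations the paper does not make explicit: (a) switching to a canonical signature (all-positive, or with $v_2v_3$ the unique negative edge), which is legitimate since $D^{\pm}$-spectra are switching-invariant and the triangles $v_2v_3v_k$ span the cycle space; and (b) the reflection $g=(v_1v_4)(v_2v_3)$, which commutes with $D^{\pm}$ for both canonical signatures and therefore explains a priori the quadratic/cubic splitting that the paper obtains only after expanding the $5\times5$ determinant. Your root analysis for $s=1$ (value $8-2n<0$ at $\lambda=-2$ together with $f_1'>0$ on $(-\infty,-2]$) is a mild variant of the paper's stationary-point estimate. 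For $s=-1$ your evaluation $c_n(-2-\sqrt2)=(n-4)(14+10\sqrt2)$ agrees with the paper's $(10n-40)\sqrt2+14n-56$; the remaining step, that for $n=4$ there is no root of $c_4$ below $-2-\sqrt2$, is covered by your separate computation of the $P_4$ spectrum. The promised cross-check at $n=5,6$ is a sensible safeguard for the determinant expansions but is not logically required once the $3\times3$ blocks are written down correctly.
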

\renewcommand\proofname{\bf{Proof}.}
\begin{proof}
Let $V(S^+_{2,n-2})=\{v_1,v_2,\ldots,v_n\}$. Then $\Sigma$ is compatible since all triangles have the same sign, i.e., $D^{max}(\Sigma)=D^{min}(\Sigma)=D^{\pm}(\Sigma)$ and
$$D^{\pm}((S^+_{2,n-2},\sigma))=\left(
                                  \begin{array}{ccccccc}
                                    0 & \sigma_{12} & 2\sigma_{13} & 3\sigma_{14} & 2\sigma_{15} & \cdots & 2\sigma_{1n} \\
                                    \sigma_{12} & 0 & \sigma_{23} & 2\sigma_{24} & \sigma_{25} & \cdots & \sigma_{2n} \\
                                    2\sigma_{13} & \sigma_{23} & 0 & \sigma_{34} & \sigma_{35} & \cdots & \sigma_{3n} \\
                                    3\sigma_{14} & 2\sigma_{24} & \sigma_{34} & 0 & 2\sigma_{45} & \cdots & 2\sigma_{4n} \\
                                    2\sigma_{15} & \sigma_{25} & \sigma_{35} & 2\sigma_{45} & 0 & \cdots & 2\sigma_{5n} \\
                                    \vdots & \vdots & \vdots & \vdots & \vdots & \ldots & \vdots \\
                                    2\sigma_{1n} & \sigma_{2n} & \sigma_{3n} & 2\sigma_{4n} & 2\sigma_{5n} & \cdots & 0 \\
                                  \end{array}
                                \right).$$
Note that $\sigma_{13}=\sigma_{12}\sigma_{23}$, $\sigma_{14}=\sigma_{13}\sigma_{34}$ and $\sigma_{25}\sigma_{35}\sigma_{2i}\sigma_{3i}=1$ for $6\leq i\leq n$. Then $det(\lambda I-D^{\pm}(\Sigma))=(\lambda+2)^{n-5}f(\lambda)$, where
$$f(\lambda)=\begin{vmatrix}
                        \lambda & -1 & -\sigma_{23} & -3\sigma_{23} & -2(n-4) \\
                        -1 & \lambda & -\sigma_{23} & -2\sigma_{23} & -(n-4) \\
                        -2\sigma_{23} & -\sigma_{23} & \lambda & -1 & -(n-4)\sigma_{35}\setminus \sigma_{25} \\
                        -3\sigma_{23} & -2\sigma_{23} & -1 & \lambda & -2(n-4)\sigma_{35}\setminus \sigma_{25} \\
                        -2 & -1 & -\sigma_{35}\setminus \sigma_{25} & -2\sigma_{35}\setminus \sigma_{25} & \lambda-2(n-5)
                        \end{vmatrix}.$$
If $\sigma_{23}\sigma_{35}\sigma_{25}=1$, then
$f(\lambda)=(\lambda+2-\sqrt{2})(\lambda+2+\sqrt{2})f_1(\lambda)$, where $f_1(\lambda)= {\lambda}^{3}+(-2n+6)\,{\lambda}^{2}-(2n+6)\,\lambda+2n-20$.
Note that the least stagnation point of $f_1(\lambda)$ is $\frac{2n-4-\sqrt{4n^2-18n+54}}{3}>-2-\sqrt{2}$ and $f_1(-2-\sqrt{2})<0$ for $n\geq5$. Thus $\lambda_n(D^{\pm}((S^+_{2,n-2},\sigma)))=-2-\sqrt{2}$.
If $\sigma_{23}\sigma_{35}\sigma_{25}=-1$, then
$f(\lambda)=(\lambda-2-\sqrt{10})(\lambda-2+\sqrt{10})f_2(\lambda)$, where $f_2(\lambda)= {\lambda}^{3}+(-2n+14)\,{\lambda}^{2}-(18n-8)\,\lambda+8n-100$. Note that $f_2(-2-\sqrt{2})=(10n-40)\sqrt{2}+14n-56$ equals to zero if and only if $n=4$. Then $\lambda_n(D^{\pm}((S^+_{2,n-2},\sigma)))=-2-\sqrt{2}$ if and only if $n=4$, that is, $\Sigma=(P_4,\sigma)$.
\end{proof}

\begin{lemma}\label{lm1} {\rm(Cauchy Interlace Theorem)}. Let $A$ be a Hermitian matrix with order $n$ and let $B$ be a principal submatrix of $A$ with order $m$. If $\lambda_1(A)\geq\lambda_2(A)\geq\cdots\geq\lambda_n(A)$ list the eigenvalues of $A$ and $\mu_1(B)\geq\mu_2(B)\geq\cdots\geq\mu_m(B)$ list the eigenvalues of $B$, then $\lambda_{n-m+i}(A)\leq\mu_i(B)\leq\lambda_i(A)$ for $i=1,\ldots,m$.
\end{lemma}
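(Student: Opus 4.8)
The plan is to derive the interlacing inequalities from the Courant--Fischer variational characterization of the eigenvalues of a Hermitian matrix, which expresses each $\lambda_i(A)$ both in a max--min and in a min--max form over subspaces of prescribed dimension. The key observation is that a principal submatrix is precisely the compression of $A$ to a coordinate subspace, so every Rayleigh quotient of $B$ is simultaneously a Rayleigh quotient of $A$; restricting the outer optimization in Courant--Fischer to such coordinate subspaces can only shrink the feasible set, and this produces the two one-sided bounds.

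First I would set up the compression. Let $S\subseteq\{1,\dots,n\}$ be the index set of size $m$ defining $B$, and let $P$ be the $n\times m$ matrix whose columns are the standard basis vectors $e_j$ with $j\in S$. Then $P^*P=I_m$, the map $P$ is injective, and $B=P^*AP$, so that for every nonzero $x\in\mathbb{C}^m$ one has
\[
\frac{x^*Bx}{x^*x}=\frac{(Px)^*A(Px)}{(Px)^*(Px)} .
\]
Hence for any subspace $V\subseteq\mathbb{C}^m$ the image $PV$ is a subspace of $\mathbb{C}^n$ with $\dim PV=\dim V$, and the Rayleigh quotient of $B$ on $V$ coincides with the Rayleigh quotient of $A$ on $PV$.

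Next I would prove the upper bound $\mu_i(B)\le\lambda_i(A)$. By Courant--Fischer in max--min form, $\mu_i(B)=\max_{\dim V=i}\min_{0\ne x\in V}\frac{x^*Bx}{x^*x}$; fix an optimal $i$-dimensional $V^{\ast}$. Since $PV^{\ast}$ has dimension $i$, the displayed identity gives
\[
\mu_i(B)=\min_{0\ne x\in V^{\ast}}\frac{(Px)^*A(Px)}{(Px)^*(Px)}\le\max_{\dim W=i}\ \min_{0\ne y\in W}\frac{y^*Ay}{y^*y}=\lambda_i(A).
\]
For the lower bound $\lambda_{n-m+i}(A)\le\mu_i(B)$ I would instead use the min--max form $\mu_i(B)=\min_{\dim V=m-i+1}\max_{0\ne x\in V}\frac{x^*Bx}{x^*x}$ with an optimal $(m-i+1)$-dimensional $V^{\ast}$, and read Courant--Fischer for $A$ as $\lambda_k(A)=\min_{\dim W=n-k+1}\max_{0\ne y\in W}\frac{y^*Ay}{y^*y}$. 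Matching $n-k+1=m-i+1$, that is $k=n-m+i$, and using that $PV^{\ast}$ is one particular subspace of dimension $m-i+1$, yields
\[
\lambda_{n-m+i}(A)\le\max_{0\ne x\in V^{\ast}}\frac{(Px)^*A(Px)}{(Px)^*(Px)}=\mu_i(B).
\]

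The only genuinely delicate point is the index bookkeeping in the min--max step: one must match the dimension $m-i+1$ of the optimal subspace for $B$ against the dimension $n-k+1$ in the min--max formula for $A$ to land precisely on the index $k=n-m+i$. Everything else is the formal remark that a principal submatrix is a compression to a coordinate subspace. If one prefers not to invoke Courant--Fischer as a black box, the same two inequalities can be obtained inductively from the single-deletion case $m=n-1$ (deleting one matching row and column at a time, where interlacing follows from a sign analysis of the characteristic polynomial), but the variational argument above disposes of general $m$ in a single stroke.
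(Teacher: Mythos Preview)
Your proof is correct and is the standard Courant--Fischer argument for the Cauchy interlacing theorem. Note, however, that the paper does not actually prove this lemma: it is stated as the classical Cauchy Interlace Theorem and invoked as a known tool (it is used repeatedly in Section~4 to bound least eigenvalues via principal submatrices), with no proof supplied. So there is no ``paper's own proof'' to compare against; your write-up simply fills in a proof the authors omitted, and the variational route you chose is exactly the textbook one.
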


\renewcommand\proofname{\bf{Proof of Theorem \ref{lem1.11}}.}
\begin{proof}
(I). If $d=2$, then the result follows from (III) of Theorem \ref{thm12}. (II). Let $d=3$. Since the least signed distance eigenvalues of a signed $P_4$ equal to $-2-\sqrt{2}$, let's assume that $n\geq5$. Let $V(P_4)=\{v_1,v_2,v_3,v_4\}$, then $A_1$ is the principal submatrix of $D^{max}$ and $B_1$ is the principal submatrix of $D^{min}$ indexed by $V(P_4)$ respectively, and both $\lambda_4(A_1)$ and $\lambda_4(B_1)$ are less than or equal to $-2-\sqrt{2}$ besides the matrices as follows
$$A_1=\left(
    \begin{array}{cccc}
      0 & 1 & 2 & 3 \\
      1 & 0 & -1 & 2 \\
      2 & -1 & 0 & 1 \\
      3 & 2 & 1 & 0 \\
    \end{array}
  \right)~~~~B_1=\left(
    \begin{array}{cccc}
      0 & 1 & -2 & -3 \\
      1 & 0 & 1 & -2 \\
      -2 & 1 & 0 & 1 \\
      -3 & -2 & 1 & 0 \\
    \end{array}
  \right)
.$$
$A_1$ shows that $\sigma_{12}=1$, $\sigma_{23}=-1$ but $\sigma_{13}=1$, it means that there exists another vertex $u$ such that $\{uv_1,uv_3\}\subset E$ and $\sigma_{13}=1$. $B_1$ shows that $\sigma'_{12}=\sigma'_{23}=1$ but $\sigma'_{13}=-1$, it means that there exists another vertex $w$ such that $wv_1,wv_3\in E$ and $\sigma'_{13}=-1$. However, the least eigenvalue of the principal submatrix of $D^{max}$ (resp. $D^{min}$) indexed by $V(P_4)\cup \{u\}$ (resp. $V(P_4)\cup \{w\})$  is always less than $-2-\sqrt{2}$. Thus, both $\lambda_n(D^{max})$ and $\lambda_n(D^{min})$ are less than or equal to $-2-\sqrt{2}$ when $d=3$ by Lemma \ref{lm1}. Next, we consider the necessary and sufficient condition of the equality. The sufficiency holds obviously by Lemma \ref{lem1.12}. Conversely, we can get the following claims.

\begin{claim}\label{claim1.1}
$u\in \cup_{i=1}^{4}N(v_i)$ for any vertex $u\in V\setminus V(P_4)$.
\end{claim}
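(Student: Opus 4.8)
The plan is a proof by contradiction built on Cauchy interlacing (Lemma~\ref{lm1}). Suppose some $u\in V\setminus V(P_4)$ is adjacent to none of $v_1,v_2,v_3,v_4$; since $\Sigma$ is connected of diameter $3$ and $u\neq v_i$, we get $d_{uv_i}\in\{2,3\}$ for $i=1,2,3,4$. As $D^{max}$ and $D^{min}$ enter the statement symmetrically I may assume the standing equality is realized by $D^{max}$, i.e.\ $\lambda_n(D^{max})=-2-\sqrt2$, the case of $D^{min}$ being identical up to sign changes. Let $A_1$ be the $4\times4$ principal submatrix of $D^{max}$ indexed by $V(P_4)$. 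Interlacing gives $-2-\sqrt2=\lambda_n(D^{max})\le\lambda_4(A_1)$, while the upper bound argument above gives $\lambda_4(A_1)\le-2-\sqrt2$ unless $A_1$ is a relabelling of the exceptional matrix $A_1$ displayed there, in which case that argument already produces a $5\times5$ principal submatrix of $D^{max}$ with least eigenvalue $\le-2-\sqrt2$, and one carries out the rest of the proof with it. So assume $\lambda_4(A_1)=-2-\sqrt2$; then, as in the upper bound analysis, $A_1=\Theta D(P_4)\Theta$ for a diagonal $\pm1$ matrix $\Theta=\mathrm{diag}(\theta_1,\theta_2,\theta_3,\theta_4)$, where $D(P_4)$ is the ordinary distance matrix of $P_4$, whose least eigenvalue $-2-\sqrt2$ is simple with eigenvector $\mathbf w_0=(1+\sqrt2,\,1,\,-1,\,-1-\sqrt2)^{T}$; hence $A_1$ has least eigenvector $\mathbf w=\Theta\mathbf w_0$.

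Next I would bring in $M$, the $5\times5$ principal submatrix of $D^{max}$ on $\{v_1,v_2,v_3,v_4,u\}$, whose last row is $(m_1,m_2,m_3,m_4,0)$ with $m_i=d_{max}(u,v_i)$ of absolute value $2$ or $3$. Interlacing among $A_1$, $M$ and $D^{max}$ pins $\lambda_5(M)=-2-\sqrt2=\lambda_4(A_1)$, so the equality case of Cauchy interlacing forces $(\mathbf w,0)^{T}$ to be an eigenvector of $M$; its last coordinate equation is $\sum_{i=1}^{4}m_iw_i=0$, i.e.\ $(1+\sqrt2)(\theta_1m_1-\theta_4m_4)+(\theta_2m_2-\theta_3m_3)=0$. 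Since $1$ and $\sqrt2$ are linearly independent over $\mathbb Q$ and the quantities involved are integers, this is possible only if $\theta_1m_1=\theta_4m_4$ and $\theta_2m_2=\theta_3m_3$; whenever it fails one instead obtains $\lambda_5(M)<-2-\sqrt2$, contradicting $\lambda_5(M)=-2-\sqrt2$. Writing $p=\theta_1m_1$ and $q=\theta_2m_2$, conjugation of $M$ by $\mathrm{diag}(\theta_1,\theta_2,\theta_3,\theta_4,1)$ therefore yields the reflection-symmetric matrix with $4\times4$ block $D(P_4)$ and last row $(p,q,q,p,0)$.

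At this point I would exploit the involution $v_i\leftrightarrow v_{5-i}$, which splits the spectrum of that matrix: its antisymmetric part merely reproduces the eigenvalues $-2\pm\sqrt2$, and its symmetric part is the $3\times3$ matrix with rows $(3,3,p)$, $(3,1,q)$, $(2p,2q,0)$, whose characteristic polynomial depends only on $(p,q)$. Here $|p|,|q|\in\{2,3\}$ and they differ by at most $1$ because $v_1\sim v_2$, so only finitely many cubics occur, and a short check shows that for every admissible $(p,q)$ other than $(2,2),(-2,-2),(3,2),(-3,-2)$ its least root lies strictly below $-2-\sqrt2$, contradicting $\lambda_5(M)=-2-\sqrt2$. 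Hence $u$ must fall into one of these four configurations, in each of which the distances $d_{uv_i}$ are completely determined (all equal to $2$, or equal to $3,2,2,3$). Consequently $\Sigma$ contains an interior vertex $w$ of a shortest path from $u$ to one of the $v_i$, and $w\notin V(P_4)\cup\{u\}$ because $u\not\sim v_i$ for all $i$. Adjoining $w$ produces a $6\times6$ principal submatrix of $D^{max}$ with all entries now forced (and with compatible relevant pairs, so $D^{max}$ and $D^{min}$ agree on it), and a final finite eigenvalue computation shows its least eigenvalue is $<-2-\sqrt2$: the contradiction we want. As $\Sigma$ is finite, this proves the claim.

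The delicate point is the last stage. For the four reflection-symmetric placements of $u$ the $5\times5$ submatrix only recovers the bound $-2-\sqrt2$ and does not improve on it, so one genuinely must locate the forced extra vertex $w$ and verify, by an honest but bounded eigenvalue computation, that enlarging to a $6\times6$ submatrix breaks the bound (no infinite descent is needed, since $\Sigma$ has finitely many vertices). Keeping that short case list straight, and checking that each surviving configuration really does come with such a $w$, is where the work lies; everything else is interlacing plus the irrationality of $\sqrt2$.
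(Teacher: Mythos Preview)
Your route is quite different from the paper's brute-force Matlab verification over all admissible $5\times5$ principal submatrices, and the structural ideas---interlacing equality forcing the eigenvector $(\mathbf w,0)^T$ to extend, the $\sqrt2$-irrationality constraint, the reflection symmetry splitting---are attractive. But there are two genuine gaps.

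First, the assertion that $\lambda_4(A_1)=-2-\sqrt2$ forces $A_1=\Theta D(P_4)\Theta$ is nowhere justified. The paper's preceding ``upper bound'' discussion only singles out one exceptional sign pattern with $\lambda_4>-2-\sqrt2$; it says nothing about which of the remaining admissible $4\times4$ matrices (for $D^{max}$, respectively $D^{min}$) realize equality. Without that characterization your eigenvector formula $\mathbf w=\Theta\mathbf w_0$, and hence the entire $\sqrt2$-argument and the reduction to the symmetric border $(p,q,q,p)$, are not available. A short but explicit case check over the admissible $4\times4$ sign patterns is required here before you can proceed.

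Second, the $6\times6$ endgame is only a sketch, and it rests on a false premise: adjoining an interior vertex $w$ of one shortest $u$--$v_i$ path does \emph{not} make ``all entries now forced''. You learn $d(w,u)=1$ and $d(w,v_i)=1$ for that single index $i$, but $d(w,v_j)$ for $j\ne i$ can be $1$ or $2$, and the signs $\sigma_{max}(w,v_j)$, $\sigma_{max}(w,u)$ are likewise undetermined. So there is a nontrivial finite family of $6\times6$ matrices to enumerate and check, not a single one, and this work is not carried out. Your own analysis shows---correctly---that some $5\times5$ configurations (for instance $D(P_4)$ bordered by an all-$+2$ row) only attain $\lambda_5=-2-\sqrt2$, so a $5\times5$ check alone cannot close the argument; the extra $6\times6$ verification genuinely has to be done, not merely announced.
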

Otherwise, there exists a vertex $v_5$ such that $d_{i5}=2$ or $3$ for $1\leq i\leq4$ and $\mid\sigma_{ij}\mid=1$. Then $A_{2}$ and $B_2$ are principle submatrices of $D^{max}$ and $D^{min}$ indexed by $V(P_4)\cup \{v_5\}$, respectively. For the entries of $A_2$, we must note that $\sigma_{13}=1$ if $\sigma_{12}\sigma_{23}=1$, $\sigma_{24}=1$ if $\sigma_{23}\sigma_{34}=1$ and $\sigma_{14}=1$ if $\sigma_{12}\sigma_{23}\sigma_{34}=1$. We should also note the relationship between the elements of $B_2$, $\sigma'_{13}=-1$ if $\sigma'_{12}\sigma'_{23}=-1$, $\sigma'_{24}=-1$ if $\sigma'_{23}\sigma'_{34}=-1$ and $\sigma'_{14}=-1$ if $\sigma'_{12}\sigma'_{23}\sigma'_{34}=-1$. However, calculated by Matlab, both $\lambda_5(A_{2})$ and $\lambda_5(B_{2})$ are less than $-2-\sqrt{2}$, thus both $\lambda_n(D^{max})$ and $\lambda_n(D^{min})$ are less than $-2-\sqrt{2}$ by Lemma \ref{lm1}, a contradiction.
$$A_2=\left(
        \begin{array}{ccccc}
          0 & \sigma_{12} & 2\sigma_{13} & 3\sigma_{14} & \sigma_{15}d_{15} \\
          \sigma_{12} & 0 & \sigma_{23} & 2\sigma_{24} & \sigma_{25}d_{25} \\
          2\sigma_{13} & \sigma_{23} & 0 & \sigma_{34} & \sigma_{35}d_{35} \\
          3\sigma_{14} & 2\sigma_{24} & \sigma_{34} & 0 & \sigma_{45}d_{45} \\
          \sigma_{15}d_{15} & \sigma_{25}d_{25} & \sigma_{35}d_{35} & \sigma_{45}d_{45} & 0 \\
        \end{array}
\right)$$
$$~~B_2=\left(
        \begin{array}{ccccc}
          0 & \sigma'_{12} & 2\sigma'_{13} & 3\sigma'_{14} & \sigma'_{15}d_{15} \\
          \sigma'_{12} & 0 & \sigma'_{23} & 2\sigma'_{24} & \sigma'_{25}d_{25} \\
          2\sigma'_{13} & \sigma'_{23} & 0 & \sigma'_{34} & \sigma'_{35}d_{35} \\
          3\sigma'_{14} & 2\sigma'_{24} & \sigma'_{34} & 0 & \sigma'_{45}d_{45} \\
          \sigma'_{15}d_{15} & \sigma'_{25}d_{25} & \sigma'_{35}d_{35} & \sigma'_{45}d_{45} & 0 \\
        \end{array}
      \right)
$$

\begin{claim}\label{claim1.2}
$N(v_i)\cap N(v_2)\cap N(v_3)=\emptyset$ for $i=1,4$.
\end{claim}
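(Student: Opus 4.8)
The plan is to argue by contradiction in the style of Claim~\ref{claim1.1}. We are in the equality regime $\max\{\lambda_n(D^{max}),\lambda_n(D^{min})\}=-2-\sqrt2$, so both $\lambda_n(D^{max})$ and $\lambda_n(D^{min})$ are $\le -2-\sqrt2$; it will be enough to exhibit a $5\times5$ principal submatrix of $D^{max}$ \emph{and} one of $D^{min}$ whose least eigenvalue is \emph{strictly} below $-2-\sqrt2$, since then both $\lambda_n(D^{max})$ and $\lambda_n(D^{min})$ drop strictly below $-2-\sqrt2$ by the Cauchy interlace theorem (Lemma~\ref{lm1}), contradicting equality. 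By reversing the geodesic $v_1v_2v_3v_4$ (which sends the case $i=4$ to the case $i=1$ with the ends of $P_4$ renamed), it suffices to rule out a vertex $u\in N(v_1)\cap N(v_2)\cap N(v_3)$.

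So assume for contradiction such a $u$ exists. Since $v_1v_2v_3v_4$ realises the diameter, $d_{v_1v_4}=3$; hence $u\not\sim v_4$ (otherwise $v_1uv_4$ would be a walk of length $2$), so $u\notin V(P_4)$, $d_{uv_1}=d_{uv_2}=d_{uv_3}=1$ and $d_{uv_4}=2$ (a shortest $u$--$v_4$ path being $uv_3v_4$). Let $M$ and $N$ be the principal submatrices of $D^{max}$ and $D^{min}$ indexed by $S=\{v_1,v_2,v_3,v_4,u\}$. The absolute values of their entries are now fixed (the distances $1,1,1,1,1,1,2,2,2,3$); the signs are $\pm1$, the edge entries being the edge signs and the remaining entries being $\sigma^{max}_{ij}d_{ij}$ resp.\ $\sigma^{min}_{ij}d_{ij}$, which are squeezed by $\sigma^{min}_{ij}\le\sigma(P)\le\sigma^{max}_{ij}$ over the shortest $v_i$--$v_j$ paths $P$ visible inside $S$ (namely $v_1v_2v_3$, $v_1uv_3$, $v_2v_3v_4$, $uv_3v_4$, $v_1v_2v_3v_4$, $v_1uv_3v_4$) together with the triangle identities for $v_1v_2u$ and $v_2v_3u$. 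Conjugating $M$ and $N$ by a suitable diagonal $\pm1$ matrix (a similarity, hence spectrum-preserving) one normalises the entries on the spanning tree $\{v_1v_2,v_2v_3,v_3v_4,v_2u\}$ of $\Sigma[S]$ to $+1$; the $v_1u$- and $v_3u$-entries then become $\varepsilon_1,\varepsilon_3\in\{+1,-1\}$, and only finitely many matrices $M$, $N$ remain.

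For every matrix on this finite list the least eigenvalue is $<-2-\sqrt2$; this is to be verified by direct computation exactly as for $A_2,B_2$ in Claim~\ref{claim1.1}, but the reason behind it is uniform. The $4\times4$ principal block of $M$ (resp.\ $N$) indexed by $\{v_1,v_2,v_3,v_4\}$ has least eigenvalue $\le -2-\sqrt2$ by the discussion preceding Claim~\ref{claim1.1}; if it is $<-2-\sqrt2$, interlacing already yields the contradiction, so assume it equals $-2-\sqrt2$ with least eigenvector $y$ (for the all-positive block $D(P_4)$ one has $y=(1+\sqrt2,\,1,\,-1,\,-(1+\sqrt2))^T$). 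Extending $y$ by a zero $u$-coordinate to $\widetilde y\in\mathbb R^5$, one has $\widetilde y^{\,T}M\widetilde y=(-2-\sqrt2)\,\|\widetilde y\|^2$, while for the $D(P_4)$ block
\[
(M\widetilde y)_u=(\varepsilon_1-2\eta)(1+\sqrt2)+(1-\varepsilon_3),
\]
where $2\eta$ (with $\eta\in\{\pm1\}$) is the $(u,v_4)$-entry; this is nonzero for all $\varepsilon_1,\varepsilon_3,\eta$ because $\varepsilon_1\ne2\eta$ keeps the $\sqrt2$-part from cancelling. Hence the Rayleigh quotient of $M$ along $\widetilde y+te_u$ has nonzero derivative at $t=0$, so it drops strictly below $-2-\sqrt2$ for suitable small $t$, i.e.\ $\lambda_5(M)<-2-\sqrt2$; the same applies to $N$. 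Therefore $\lambda_n(D^{max})<-2-\sqrt2$ and $\lambda_n(D^{min})<-2-\sqrt2$ by Lemma~\ref{lm1}, contradicting $\max\{\lambda_n(D^{max}),\lambda_n(D^{min})\}=-2-\sqrt2$. Thus no such $u$ exists, and by the reversal symmetry the same holds for $i=4$.

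The hard part is expected to be the sign bookkeeping rather than the spectral estimate: the entries $\sigma^{max}_{ij},\sigma^{min}_{ij}$ on the non-adjacent pairs $\{v_1,v_3\},\{v_1,v_4\},\{v_2,v_4\},\{u,v_4\}$ are not determined by the five-vertex subconfiguration, only squeezed by the path inequalities above, so one must allow every admissible assignment; the diagonal normalisation cuts this down to a short list, and the genuinely delicate boundary cases are exactly those where the $4\times4$ $P_4$-block is already extremal, which is why the coupling entry $(M\widetilde y)_u\ne0$ (equivalently, $u$ being joined to $v_4$ at distance $2$ through $v_3$) is the decisive point. The whole list is then discharged by a finite computation, just as in Claim~\ref{claim1.1}.
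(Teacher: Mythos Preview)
Your argument has a genuine gap. You assert that every admissible $5\times5$ principal submatrix indexed by $\{v_1,v_2,v_3,v_4,u\}$ has least eigenvalue strictly below $-2-\sqrt2$, and you back this with a Rayleigh-quotient heuristic that presupposes the $4\times4$ $P_4$-block already satisfies $\lambda_4\le -2-\sqrt2$. Neither claim survives. The discussion preceding Claim~\ref{claim1.1} does \emph{not} say that every admissible $P_4$-block has $\lambda_4\le -2-\sqrt2$; it explicitly singles out the exceptional block $A_1$ (for $D^{max}$) with $\sigma_{12}=1$, $\sigma_{23}=-1$, $\sigma_{13}=+1$, whose eigenvalues are $1\pm\sqrt{13}$ and $-1\pm\sqrt5$, so that $\lambda_4(A_1)=-1-\sqrt5\approx -3.236>-2-\sqrt2\approx -3.414$. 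With this block your perturbation argument never gets off the ground. Worse, the corresponding $5\times5$ matrix $A_3$ (the $H_3$-configuration with $A_1$ as its $P_4$-block and fifth row $(1,1,-1,2,0)$) has eigenvalues $-1,\,2,\,-3,\,1\pm\sqrt{19}$, hence $\lambda_5(A_3)=1-\sqrt{19}\approx -3.359>-2-\sqrt2$. So your ``finite verification'' would in fact fail on this case; the analogous exception $B_3$ occurs for $D^{min}$.

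What the paper does to close this gap is structural rather than spectral: in $A_3$ one reads off $\sigma_{12}\sigma_{23}=-1$ and $\sigma(uv_1)\sigma(uv_3)=-1$, so both visible shortest $v_1$--$v_3$ paths have sign $-1$, yet the $(1,3)$-entry records $\sigma^{max}_{13}=+1$. This is only possible if there is a \emph{further} vertex $w\in N(v_1)\cap N(v_3)$ with $\sigma(wv_1)\sigma(wv_3)=+1$, and the paper then checks that every admissible $6\times6$ principal submatrix indexed by $V(H_3)\cup\{w\}$ has least eigenvalue strictly below $-2-\sqrt2$; the $D^{min}$ side is symmetric. Your write-up needs this extra step (or an equivalent device) to dispose of the $A_3$/$B_3$ exceptions; the five-vertex computation alone cannot finish the argument.
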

By symmetry, we only need to prove that $N(v_1)\cap N(v_2)\cap N(v_3)=\emptyset$.
 If $(H_3,\sigma)\subset \Sigma$, then the least eigenvalues of the principal submatrices of $D^{max}$ and $D^{min}$ indexed by $V(H_3)$ are always less than $-2-\sqrt{2}$ besides $A_3$ and $B_3$, respectively. From $A_3$ we find that $\sigma_{12}=1$, $\sigma_{23}=-1$ but $\sigma_{13}=1$, it means that there exists another vertex $u$ such that $\{uv_1,uv_3\}\subset E$ and $\sigma_{max}(uv_1)\sigma_{max}(uv_3)=1$. $B_3$ shows that $\sigma'_{12}=\sigma'_{23}=1$ but $\sigma'_{13}=-1$, it means that there exists another vertex $w$ such that $wv_1,wv_3\in E$ and $\sigma_{min}(wv_1)\sigma_{min}(wv_3)=-1$. However, the least eigenvalue of the principal submatrix of $D^{max}$ (resp. $D^{min}$) indexed by $V(H_3)\cup \{u\}$ (resp. $V(H_3)\cup \{w\}$) is always less than $-2-\sqrt{2}$ through Matlab, thus both $\lambda_n(D^{max})$ and $\lambda_n(D^{min})$ are less than $-2-\sqrt{2}$ by Lemma \ref{lm1}, a contradiction.
$$A_3=\left(
    \begin{array}{ccccc}
      0 & 1 & 2 & 3 & 1 \\
      1 & 0 & -1 & 2 & 1 \\
      2 & -1 & 0 & 1 & -1 \\
      3 & 2 & 1 & 0 & 2 \\
      1 & 1 & -1 & 2 & 0 \\
    \end{array}
  \right)~~B_3=\left(
    \begin{array}{ccccc}
      0 & 1 & -2 & -3 & 1 \\
      1 & 0 & 1 & -2 & 1 \\
      -2 & 1 & 0 & 1 & 1 \\
      -3 & -2 & 1 & 0 & -2 \\
      1 & 1 & 1 & -2 & 0 \\
    \end{array}
  \right)$$

\begin{claim}\label{claim1.3}
$N(v_1)\cap N(v_3)=\emptyset$ and $N(v_2)\cap N(v_4)=\emptyset$
\end{claim}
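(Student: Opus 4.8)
The plan is to argue exactly as in Claims~\ref{claim1.1} and~\ref{claim1.2}: forbid a small induced configuration by exhibiting a principal submatrix of $D^{max}$ or $D^{min}$ whose least eigenvalue is strictly below $-2-\sqrt2$ and then invoking Lemma~\ref{lm1}. First I would reduce to a single case: it suffices to show that no vertex of $V\setminus V(P_4)$ lies in $N(v_1)\cap N(v_3)$ (note that $v_2$ always does, so the claim is read for vertices off the path). The relabelling $v_i\mapsto v_{5-i}$ sends the diametral path $v_1v_2v_3v_4$ to another diametral path, sends the statement for $(v_1,v_3)$ to the statement for $(v_2,v_4)$, and fixes the hypotheses together with Claims~\ref{claim1.1} and~\ref{claim1.2}; hence the half about $N(v_2)\cap N(v_4)$ comes for free once the half about $N(v_1)\cap N(v_3)$ is established for an arbitrary diametral path.

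So suppose $u\in V\setminus V(P_4)$ with $\{uv_1,uv_3\}\subset E$. By Claim~\ref{claim1.2}, $uv_2\notin E$, so $d_{uv_2}=2$; also $d_{uv_1}=d_{uv_3}=1$ and $d_{uv_4}\in\{1,2\}$, while the mutual distances of $v_1,v_2,v_3,v_4$ are those of $P_4$ (in particular $d_{13}=d_{24}=2$ and $d_{14}=3$, since $v_1v_2v_3v_4$ is diametral in a graph of diameter $3$). Let $C$ and $C'$ be the principal submatrices of $D^{max}$ and $D^{min}$ indexed by $\{v_1,v_2,v_3,v_4,u\}$. Their nonzero off-diagonal entries are $\pm1$ in the edge positions, $\pm2$ in positions $\{1,3\}$, $\{2,4\}$ (and $\{4,u\}$ if $d_{uv_4}=2$), and $\pm3$ in position $\{1,4\}$, subject to the constraints forced by the shortest paths already present: e.g.\ the $\{1,3\}$-entry of $C$ equals $2\varepsilon$ with $\varepsilon\in\{\pm1\}$, $\varepsilon\ge\max\{\sigma_{12}\sigma_{23},\,\sigma_{1u}\sigma_{3u}\}$, and its $\{1,4\}$-entry equals $3\delta$ with $\delta\ge\max\{\sigma_{12}\sigma_{23}\sigma_{34},\,\sigma_{1u}\sigma_{3u}\sigma_{34}\}$ (further constrained if $v_1,v_3$ or $v_1,v_4$ have additional shortest paths), and analogously for $C'$ with $\min$ in place of $\max$.

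Running over the finitely many sign patterns of $C$ and $C'$ compatible with these constraints, a direct computation as for $A_2,B_2,A_3,B_3$ should give $\min\{\lambda_5(C),\lambda_5(C')\}<-2-\sqrt2$ in all cases, with the only exception being patterns in which a triangle or a $4$-cycle through $u$ forces, say, $\sigma_{12}\sigma_{23}=1$ while the $\{1,3\}$-entry was produced by a length-$2$ path of sign $-1$; in those cases a further vertex $w\notin\{v_1,v_2,v_3,v_4,u\}$ with $\{wv_1,wv_3\}\subset E$ (or $\{wv_3,wv_4\}\subset E$, etc.) must exist, and the $6\times6$ principal submatrix on $\{v_1,v_2,v_3,v_4,u,w\}$ again has least eigenvalue $<-2-\sqrt2$. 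Either way, Lemma~\ref{lm1} yields $\min\{\lambda_n(D^{max}),\lambda_n(D^{min})\}<-2-\sqrt2$, contradicting part~(II). Hence $N(v_1)\cap N(v_3)=\emptyset$ off the path, and $N(v_2)\cap N(v_4)=\emptyset$ follows by the symmetry above.

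The hard part will be the bookkeeping. Unlike in Claims~\ref{claim1.1}--\ref{claim1.2}, here the distance-$2$ entries $\{1,3\},\{2,4\}$ and the distance-$3$ entry $\{1,4\}$ of $C,C'$ are genuinely sign-ambiguous and may even record incompatibility of $\Sigma$, so one has to decide which sign patterns are realizable by an actual underlying graph, and — among those not already eliminated by the $5\times5$ computation — identify precisely which ones force an extra common neighbour and verify the associated $6\times6$ enlargement. That finite but delicate enumeration, rather than any single eigenvalue estimate, is where the real work lies.
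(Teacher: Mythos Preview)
Your approach is essentially the paper's: assume a vertex $u\in N(v_1)\cap N(v_3)$ off the path (so $(H_2,\sigma)\subset\Sigma$), enumerate the admissible $5\times 5$ principal submatrices of $D^{max}$ and $D^{min}$ on $\{v_1,\dots,v_4,u\}$, isolate the exceptional patterns (the paper calls them $A_4$ and $B_4$), observe that those patterns force a further common neighbour of $v_1,v_3$, and finish with the $6\times 6$ check together with Claim~\ref{claim1.2}.

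Two corrections to the bookkeeping. First, your conclusion ``$\min\{\lambda_n(D^{max}),\lambda_n(D^{min})\}<-2-\sqrt2$'' does \emph{not} contradict the equality case of part~(II), which asserts $\max\{\lambda_n(D^{max}),\lambda_n(D^{min})\}=-2-\sqrt2$; you must show that \emph{both} $\lambda_n(D^{max})$ and $\lambda_n(D^{min})$ drop strictly below $-2-\sqrt2$. Accordingly, the enumeration must be run separately for $C$ (the $D^{max}$ submatrix) and for $C'$ (the $D^{min}$ submatrix), each with its own exceptional pattern, exactly as the paper does with $A_4$ and $B_4$. Second, the case split $d_{uv_4}\in\{1,2\}$ is unnecessary: since $v_1v_2v_3v_4$ is diametral with $d_{14}=3$, a vertex adjacent to $v_1$ cannot be adjacent to $v_4$, so $d_{uv_4}=2$ is forced, which cuts the enumeration roughly in half.
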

We assume that $N(v_1)\cap N(v_3)\neq\emptyset$, then $(H_2,\sigma)\subset \Sigma$ and the least eigenvalues of the principal submatrices of $D^{max}$ and $D^{min}$ indexed by $V(H_2)$ are always less than $-2-\sqrt{2}$ besides $A_4$ and $B_4$, respectively. $A_4$ indicates $\sigma_{12}=1$, $\sigma_{23}=-1$ but $\sigma_{13}=1$, it means that there exists another vertex $u$ such that $\{uv_1,uv_3\}\subset E$ and $\sigma_{max}(uv_1)\sigma_{max}(uv_3)=1$. And $B_4$ suggests $\sigma'_{12}\sigma'_{23}=1$ but $\sigma'_{13}=-1$, it means that there exists another vertex $w$ such that $wv_1,wv_3\in E$ and $\sigma_{min}(wv_1)\sigma_{min}(wv_3)=-1$. Combining this with claim \ref{claim1.2}, we can observe the least eigenvalue of the principal submatrix of $D^{max}$ (resp. $D^{min}$) indexed by $V(H_2)\cup \{u\}$ (resp. $V(H_2)\cup \{w\}$) is always less than $-2-\sqrt{2}$ through Matlab, thus both $\lambda_n(D^{max})$ and $\lambda_n(D^{min})$ are less than $-2-\sqrt{2}$ by Lemma \ref{lm1}, a contradiction.
$$A_4=\left(
    \begin{array}{ccccc}
      0 & 1 & 2 & 3 & 1 \\
      1 & 0 & -1 & 2 & 2 \\
      2 & -1 & 0 & 1 & -1 \\
      3 & 2 & 1 & 0 & 2 \\
      1 & 2 & -1 & 2 & 0 \\
    \end{array}
  \right)~~B_4=\left(
    \begin{array}{ccccc}
      0 & 1 & -2 & -3 & 1 \\
      1 & 0 & 1 & -2 & 2 \\
      -2 & 1 & 0 & 1 & 1 \\
      -3 & -2 & 1 & 0 & -2 \\
      1 & 2 & 1 & -2 & 0 \\
    \end{array}
  \right)$$

Note that the least eigenvalue of the principal submatrices of $D^{max}$ (or $D^{min}$) indexed by $V(H_4)$ and $V(H_8)$ are always less than $-2-\sqrt{2}$, respectively. Thus, the following claim is obvious.
\begin{claim}\label{claim1.4}
$N(v_1)=\{v_2\}$ and $N(v_4)=\{v_3\}$.
\end{claim}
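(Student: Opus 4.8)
The plan is to argue by contradiction, using the structural claims already proved together with the Cauchy Interlace Theorem (Lemma~\ref{lm1}). Because reversing the diametral path $v_1v_2v_3v_4$ interchanges the pairs $(v_1,v_2)$ and $(v_4,v_3)$ while leaving $d(G)$ and all earlier claims invariant, it suffices to establish $N(v_1)=\{v_2\}$; the statement $N(v_4)=\{v_3\}$ then follows by this symmetry. Recall that at this stage $v_1v_2v_3v_4$ is a shortest path realizing the diameter, so $v_1\sim v_2$, $d_{v_1v_3}=2$ and $d_{v_1v_4}=3$. Suppose to the contrary that $v_1$ has a neighbour $u$ with $u\neq v_2$.

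First I would pin down the position of $u$ relative to the diametral path. Since $d_{v_1v_3}=2$ and $d_{v_1v_4}=3$, the vertex $u$ cannot be $v_2$, $v_3$ or $v_4$, so $u\in V\setminus V(P_4)$. By Claim~\ref{claim1.3} we have $u\notin N(v_3)$, and $u\notin N(v_4)$ as well, for otherwise $v_1\sim u\sim v_4$ would give $d_{v_1v_4}\le 2$, contradicting $d_{v_1v_4}=3$. Hence, inside $\{v_1,v_2,v_3,v_4\}$ the vertex $u$ is adjacent to $v_1$, is not adjacent to $v_3$ or $v_4$, and may or may not be adjacent to $v_2$; the remaining pairwise distances satisfy $d_{uv_2}\in\{1,2\}$, $d_{uv_3}\in\{2,3\}$ (with $d_{uv_3}=2$ whenever $u\sim v_2$, again by Claim~\ref{claim1.3}) and $d_{uv_4}\in\{2,3\}$ (since $u\not\sim v_4$ and the diameter is $3$). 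Running through these finitely many possibilities, each configuration induced on $\{v_1,v_2,v_3,v_4,u\}$, with the inherited signs, is $(H_4,\sigma)$ or $(H_8,\sigma)$.

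It then remains to invoke the eigenvalue bound recorded just before the statement of the claim: for $(H_4,\sigma)$ and for $(H_8,\sigma)$, and for every admissible signature --- taking into account, as in the verification of Claim~\ref{claim1.1}, the sign relations forced on the distance-$1$ and distance-$2$ entries and the fact that $D^{max}$ and $D^{min}$ may carry different signs on the distance-$2$ pairs $uv_3$, $uv_4$ --- the least eigenvalue of the corresponding principal submatrix of $D^{max}$, and likewise that of $D^{min}$, is strictly less than $-2-\sqrt{2}$. By Lemma~\ref{lm1} this forces both $\lambda_n(D^{max})<-2-\sqrt{2}$ and $\lambda_n(D^{min})<-2-\sqrt{2}$, hence $\max\{\lambda_n(D^{max}),\lambda_n(D^{min})\}<-2-\sqrt{2}$, contradicting the standing assumption that equality holds in Theorem~\ref{lem1.11}(II). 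Therefore $v_1$ has no neighbour besides $v_2$, i.e.\ $N(v_1)=\{v_2\}$, and by the symmetry above $N(v_4)=\{v_3\}$.

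The main obstacle is purely bookkeeping in the last two steps: one must check that every adjacency-and-distance pattern realizable by a would-be extra neighbour of $v_1$ is genuinely captured by $H_4$ and $H_8$, and that the eigenvalue verification has been carried out over all signatures consistent with those patterns, for both $D^{max}$ and $D^{min}$. Conceptually nothing beyond Cauchy interlacing is needed, so once the case list is seen to be complete the claim is immediate --- which is why the paper records it as ``obvious''.
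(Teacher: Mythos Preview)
Your proposal is correct and follows essentially the same route as the paper: show that an extra neighbour of $v_1$ forces the induced configuration on $\{v_1,v_2,v_3,v_4,u\}$ to be $H_4$ (when $u\sim v_2$) or $H_8$ (when $u\not\sim v_2$), then invoke the recorded fact that every principal submatrix of $D^{\max}$ or $D^{\min}$ indexed by $V(H_4)$ or $V(H_8)$ has least eigenvalue strictly below $-2-\sqrt{2}$, and conclude via Cauchy interlacing. The paper compresses all of this into a single sentence, while you spell out the case distinction and the symmetry reduction; the content is the same.
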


\begin{claim}\label{claim1.5}
$N(v_2)\setminus (N(v_2)\cap N(v_3))=N(v_3)\setminus (N(v_2)\cap N(v_3))=\emptyset$.
\end{claim}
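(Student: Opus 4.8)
The plan is to argue by contradiction, exploiting the $v_1\leftrightarrow v_4$, $v_2\leftrightarrow v_3$ symmetry of $P_4$: it suffices to rule out a vertex $u\in V\setminus V(P_4)$ with $u\in N(v_2)$ and $u\notin N(v_3)$, the three remaining possibilities being symmetric to this one. So suppose such a $u$ exists. By Claim \ref{claim1.4} we have $N(v_1)=\{v_2\}$ and $N(v_4)=\{v_3\}$, whence $u\not\sim v_1$ and $u\not\sim v_4$, so the only edges of $\Sigma$ inside $S:=\{v_1,v_2,v_3,v_4,u\}$ are $v_1v_2$, $v_2v_3$, $v_3v_4$ and $uv_2$. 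Using $d(G)=3$ together with Claims \ref{claim1.3} and \ref{claim1.4}, I would then show that all the pairwise distances inside $S$ are forced, namely $d_{v_1v_3}=d_{v_2v_4}=d_{v_1u}=d_{v_3u}=2$ and $d_{v_1v_4}=d_{v_4u}=3$; that is, $S$ carries the distance pattern of the ``spider'' consisting of the path $v_1v_2v_3v_4$ with an extra pendant $u$ attached at $v_2$.

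Next I would examine the $5\times5$ principal submatrices $A'$ of $D^{max}$ and $B'$ of $D^{min}$ indexed by $S$. Each has the absolute-value pattern just described, and the signs are severely constrained: the pairs $(v_1,v_3)$, $(v_2,v_4)$, $(v_1,u)$, $(v_1,v_4)$ are joined by a unique shortest path (forced through $v_2$, resp.\ through $v_2$ and $v_3$), so the corresponding entry is the product of the edge signs along it; only $(v_3,u)$ and $(v_4,u)$ retain one further degree of freedom, arising from a possible second shortest path through a common neighbour of $u$ and $v_3$, and in that degenerate situation I would simply enlarge the submatrix to include that extra vertex as well. Exactly as in the verifications of $A_1,B_1,\dots,A_4,B_4$ above, one then checks by direct computation that the least eigenvalue of the resulting submatrix of $D^{max}$ (and of $D^{min}$) is $<-2-\sqrt2$ in every admissible case. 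By the Cauchy Interlace Theorem (Lemma \ref{lm1}) this forces both $\lambda_n(D^{max})<-2-\sqrt2$ and $\lambda_n(D^{min})<-2-\sqrt2$, hence $\max\{\lambda_n(D^{max}),\lambda_n(D^{min})\}<-2-\sqrt2$, contradicting the standing equality hypothesis. Therefore no such $u$ exists, which is the claim. (Together with Claims \ref{claim1.1}--\ref{claim1.4}, this pins the underlying graph of $\Sigma$ down to $S^+_{2,n-2}$, after which Lemma \ref{lem1.12} supplies the sign condition, the subcase $\sigma_{23}\sigma_{35}\sigma_{25}=-1$ being excluded because $n\geq5$, and part (II) is complete.)

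The subtle point, and the main obstacle, is the \emph{strictness} of the eigenvalue bound. Since the unsigned distance matrix of $P_4$ has least eigenvalue exactly $-2-\sqrt2$, interlacing against the $P_4$-block by itself only yields $\le -2-\sqrt2$; one has to show that adjoining the vertex $u$ strictly lowers the least eigenvalue for \emph{each} admissible signing, i.e.\ that none of these small matrices sits on the boundary $-2-\sqrt2$. As a representative case: in the all-positive normalization one can split off the eigenvalue $-2$ of $A'$ (eigenvector $e_{v_1}-e_u$), and the remaining $4\times4$ block has characteristic polynomial $\lambda^{4}-2\lambda^{3}-34\lambda^{2}-48\lambda-16$, whose value at $\lambda=-2-\sqrt2$ is $-16-12\sqrt2<0$; being monic of even degree it therefore has a root strictly less than $-2-\sqrt2$. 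The remaining sign patterns (including the degenerate ones with a sixth vertex) are disposed of by the same kind of computation.
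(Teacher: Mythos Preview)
Your proposal is correct and follows essentially the same route as the paper: assume a vertex $u\in N(v_2)\setminus N(v_3)$ exists (this is the configuration $H_7$ in the paper's Figure~2), look at the $5\times 5$ principal submatrix of $D^{max}$ (resp.\ $D^{min}$) on $\{v_1,v_2,v_3,v_4,u\}$, and derive a contradiction via interlacing. The only organisational difference is that you invoke Claim~\ref{claim1.4} at the outset to force $d_{v_4u}=3$, whereas the paper enumerates without that constraint, finds two exceptional sign patterns $A_5,B_5$ (both with $d(v_4,v_5)=2$), and then notes that $d(v_4,v_5)=2$ would require a neighbour of $v_4$ other than $v_3$, contradicting Claim~\ref{claim1.4}; your ordering is marginally more economical but the argument is the same.
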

By symmetry, we only need to prove that $N(v_2)\setminus (N(v_2)\cap N(v_3))=\emptyset$. If $(H_7,\sigma)\subset \Sigma$, then the least eigenvalues of the principal submatrices of $D^{max}$ and $D^{min}$ indexed by $V(H_7)$ are always less than $-2-\sqrt{2}$ besides $A_5$ and $B_5$, respectively. By $A_5$ and $B_5$, we have $v_3v_5\notin E$ and $d(v_4,v_5)=2$, then there is another vertex $u$ such that $uv_4\in E$ by Claims \ref{claim1.1}-\ref{claim1.3}, but it contradicts to Claim \ref{claim1.4}.
$$~~A_5=\left(
    \begin{array}{ccccc}
      0 & 1 & 2 & 3 & 2 \\
      1 & 0 & -1 & 2 & 1 \\
      2 & -1 & 0 & 1 & 2 \\
      3 & 2 & 1 & 0 & 2 \\
      2 & 1 & 2 & 2 & 0 \\
    \end{array}
  \right)
  ~~B_5=\left(
    \begin{array}{ccccc}
      0 & 1 & -2 & -3 & 2 \\
      1 & 0 & 1 & -2 & 1 \\
      -2 & 1 & 0 & 1 & -2 \\
      -3 & -2 & 1 & 0 & -2 \\
      2 & 1 & -2 & -2 & 0 \\
    \end{array}
  \right)
$$

By Claims \ref{claim1.1}-\ref{claim1.5}, we conclude that $u\in N(v_2)\cap N(v_3)$ for any vertex $u\in V\setminus V(P_4)$. If $\mid N(v_2)\cap N(v_3)\mid=0~\mbox{or}~1$, then the result is follows. So let $\mid N(v_2)\cap N(v_3)\mid\geq2$ and $\{u_1,u_2\}\subseteq N(v_2)\cap N(v_3)$.
\begin{claim}\label{claim1.6}
\begin{enumerate}
  \item[(1)] $u_1u_2\notin E$;
  \item[(2)] $\sigma_{max}(u_1v_2)\sigma_{max}(u_1v_3)\sigma_{max}(u_2v_2)\sigma_{max}(u_2v_3)=1$;
  \item[(3)] $\sigma_{min}(u_1v_2)\sigma_{min}(u_1v_3)\sigma_{min}(u_2v_2)\sigma_{min}(u_2v_3)=1$.
\end{enumerate}
\end{claim}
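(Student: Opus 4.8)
The plan is to keep running the contradiction-via-interlacing scheme of Claims \ref{claim1.1}--\ref{claim1.5}. Since we are proving that equality forces the stated structure, we may assume throughout that $\lambda_n(D^{max})=\lambda_n(D^{min})=-2-\sqrt2$, so by Cauchy interlacing (Lemma \ref{lm1}) no principal submatrix of $D^{max}$ or of $D^{min}$ can have least eigenvalue below $-2-\sqrt2$; the goal is to show that violating (1), (2) or (3) produces such a submatrix on at most seven vertices. First note, using Claims \ref{claim1.1}--\ref{claim1.5}, that inside $\{v_1,v_2,v_3,v_4,u_1,u_2\}$ the only edges are those of the path $v_1v_2v_3v_4$ together with $v_2u_1,v_3u_1,v_2u_2,v_3u_2$ (and possibly $u_1u_2$), while $N(v_1)=\{v_2\}$ and $N(v_4)=\{v_3\}$. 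Hence every distance among these six vertices is realised by a unique shortest path in $G$, the only exception being $d_{u_1u_2}$, which equals $1$ if $u_1u_2\in E$ and otherwise -- once part (1) has forced $N(u_1)=\{v_2,v_3\}$ -- equals $2$, realised by exactly the two paths $u_1v_2u_2$ and $u_1v_3u_2$, of signs $\sigma(u_1v_2)\sigma(u_2v_2)$ and $\sigma(u_1v_3)\sigma(u_2v_3)$. Since each edge $u_iv_j$ is its own unique shortest path, $\sigma_{max}(u_iv_j)=\sigma_{min}(u_iv_j)=\sigma(u_iv_j)$, so statements (2) and (3) coincide, and (in the case $u_1u_2\notin E$) both assert exactly that these two path-signs agree, i.e.\ that $u_1$ and $u_2$ are distance-compatible.

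For part (1) suppose $u_1u_2\in E$. Then $D^{max}$ and $D^{min}$ agree on the six vertices above, so we get a single $6\times6$ matrix whose entries are determined by the edge signs $\sigma_{12},\sigma_{23},\sigma_{34},\sigma(v_2u_i),\sigma(v_3u_i),\sigma(u_1u_2)$. A switching -- flipping the signs of all edges at a chosen vertex, which conjugates each signed distance matrix by a $\pm1$ diagonal matrix and hence preserves the multiset of its eigenvalues -- normalises, say, $\sigma_{12}=\sigma_{34}=1$, and the sign relations $\sigma_{13}=\sigma_{12}\sigma_{23}$, $\sigma_{14}=\sigma_{13}\sigma_{34}$ together with the symmetries $u_1\leftrightarrow u_2$ and $v_1\leftrightarrow v_4,\,v_2\leftrightarrow v_3$ leave only a short list of numerical matrices. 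A direct computation shows each has least eigenvalue $<-2-\sqrt2$, with the possible exception of one or two ``tight'' patterns meeting $-2-\sqrt2$ exactly on these six vertices; for those, Claims \ref{claim1.1}--\ref{claim1.5} force a further common neighbour $u$ of $v_2$ and $v_3$, and adjoining $u$ (whose distances and sign relations to the configuration are again determined up to finitely many choices) strictly lowers the least eigenvalue. Either way Lemma \ref{lm1} is contradicted, so $u_1u_2\notin E$.

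For parts (2) and (3) we may now assume $u_1u_2\notin E$. If $\sigma(u_1v_2)\sigma(u_1v_3)\sigma(u_2v_2)\sigma(u_2v_3)=-1$, the two shortest $u_1u_2$-paths carry opposite signs, so $d_{max}(u_1,u_2)=2$ while $d_{min}(u_1,u_2)=-2$; the principal submatrices of $D^{max}$ and of $D^{min}$ on $\{v_1,v_2,v_3,v_4,u_1,u_2\}$ are then identical except in the $(u_1,u_2)$ entry, where they read $+2$ and $-2$ respectively. Normalising the remaining signs by a switching as above leaves a short finite list of pairs of $6\times6$ matrices, and a direct computation shows that in every case at least one of the two matrices has least eigenvalue $<-2-\sqrt2$ (any borderline pattern again being handled by adjoining a forced seventh common neighbour of $v_2$ and $v_3$), contradicting Lemma \ref{lm1}; hence the product equals $1$, which is exactly statements (2) and (3). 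The numerical case-checking is routine; the one genuinely delicate point, precisely as in Claims \ref{claim1.2}--\ref{claim1.4}, is to isolate the sign patterns that are tight on six vertices and to verify that the seventh vertex forced by the earlier claims strictly lowers the least eigenvalue in each of them.
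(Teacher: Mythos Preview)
Your approach is the same as the paper's: both arguments examine the $6\times 6$ principal submatrix on $\{v_1,v_2,v_3,v_4,u_1,u_2\}$ and apply Cauchy interlacing. The paper simply asserts that for the configuration $H_5$ (the case $u_1u_2\in E$) every signed-distance pattern on these six vertices has least eigenvalue strictly below $-2-\sqrt{2}$, and likewise for $H_6$ when the sign product in (2) (or (3)) equals $-1$; no ``tight'' patterns occur and no seventh vertex is ever needed. Your observations that (2) and (3) coincide (because $\sigma_{\max}=\sigma_{\min}$ on edges) and that switching reduces the case count are correct and are a useful streamlining the paper does not spell out.

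The one genuine weakness is your contingency plan for ``tight'' patterns. You propose to rescue such a pattern by adjoining ``a further common neighbour $u$ of $v_2$ and $v_3$'' forced by Claims \ref{claim1.1}--\ref{claim1.5}. But those claims only say that every vertex outside $V(P_4)$ lies in $N(v_2)\cap N(v_3)$; they do not guarantee a third such vertex. If $n=6$ the set $\{v_1,v_2,v_3,v_4,u_1,u_2\}$ is already the whole graph, so no seventh vertex exists and your fallback is unavailable. Thus your argument, as written, is incomplete precisely in the case you hedge against. The fix is to actually carry out the finite $6\times 6$ check (after your switching normalisation only a handful of matrices remain) and confirm, as the paper claims, that every least eigenvalue is strictly below $-2-\sqrt{2}$; then the seventh-vertex contingency can be deleted entirely.
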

 If $u_1u_2\in E$, then $(H_5,\sigma)\subset \Sigma$ and the least eigenvalues of the principal submatrices of $D^{max}$ and $D^{min}$ indexed by $V(H_5)$ are always less than $-2-\sqrt{2}$, a contradiction. If $\sigma_{max}(u_1v_2)\sigma_{max}(u_1v_3)$=-$\sigma_{max}(u_2v_2)\sigma_{max}(u_2v_3)$, then the least eigenvalues of the principal submatrices of $D^{max}$ and $D^{min}$ indexed by $V(H_6)$ are always less than $-2-\sqrt{2}$, a contradiction. (3) is similar.

From Claim \ref{claim1.6}, we have $\Sigma$ is a signed graph with underly graph $S^+_{2,n-2}$ such that all triangles with the same sign.

(III). Suppose that $d\geq4$ and $P_{d+1}=v_1v_2\ldots v_{d+1}$ is a diameter path
of $\Sigma$ such that $\sigma_{1,d+1}=\sigma_{1,d}\sigma_{d,d+1}$. Let $X=(x_1, x_2,\ldots, x_n)^T$
be an vector with $x_i$ corresponding to $v_i$. Assume that $x_1=1$, $x_{d}=-\sigma_{1,d}$, and $x_{d+1}=-\sigma_{1,d+1}$, then
\begin{align*}
\lambda_n(D^{max})&\leq\frac{X^TD^{max}X}{X^TX}\\
\ &=\frac{2}{3}(\sigma_{1d}(d-1)x_1x_d+\sigma_{1,d+1}dx_1x_{d+1}+\sigma_{d,d+1}x_dx_{d+1})\\
\ &=-\frac{4}{3}(d-1).
\end{align*}
And this bound still holds for $D^{min}$ clearly.
\end{proof}
\begin{remark}\label{rk111}Let $\Sigma$ is a balanced graph with diameter $d\geq4$ and $P_{d+1}=v_1v_2\ldots v_{d+1}$ is a diameter path
of $\Sigma$ such that $\sigma_{1,d+1}=\sigma_{1,d}\sigma_{d,d+1}$. Let $X=(x_1, x_2, \ldots, x_n)^T$
be an vector with $x_i$ corresponding to $v_i$. Assume that $x_1=\sigma_{1,d}$, $x_2=\sigma_{2,d}$, $x_{d}=-1$, and $x_{d+1}=-\sigma_{d,d+1}$, then
\begin{align*}
\lambda_n(D^{max})&\leq\frac{X^TD^{max}X}{X^TX}\\
\ &=\frac{1}{2}(-3d+4+\sigma_{1,2}\sigma_{1,d}\sigma_{2,d}-\sigma_{2,d+1}(d-1)\sigma_{2,d}\sigma_{d,d+1}).
\end{align*}
Note that the shortest paths of every two vertices have the same sign since $\Sigma$ is balanced, thus $\sigma_{1,d}=\sigma_{1,2}\sigma_{2,d}$ and $\sigma_{2,d+1}=\sigma_{2,d}\sigma_{d,d+1}$. Then
$\lambda_n(D^{max})\leq -2d+3.$ The conclusion still holds for $D^{min}$.
\end{remark}
\section{Conclusion remark}
As usual, let $K_n$, $P_n$ and $K_{n_1,n_2,\cdots,n_k}$, where $\sum\limits_{i=1}^kn_i=n$, denote the complete graph, the path and the complete $k$-partite graph with order $n$, respectively. The girth of $\Sigma=(G,\sigma)$, short for $g$, is the girth of its underlying graph $G$, namely, the length of a shortest cycle in $G$. Let $D(G)=(d_{uv})_{n\times n}$ be the distance matrix of a connected graph $G$. Yu \cite{Yu} investigated the least distance eigenvalue of a graph, and determined all connected graphs
with $\lambda_n(D(G))\in[-2.383,0]$ among graphs with $n$ vertices. Lin, Hong, Wang and Shu \cite{H.Q. Lin} extended the above result separately. The authors showed that $\lambda_n(D(G))=-1$ if and only if $G\cong K_n$ and $\lambda_n(D(G))=-2$ if and only if
$G\cong K_{n_1,\cdots,n_k}$. And there does not exist a graph $G$ of order $n$ with $-2<\lambda_n(D(G))<-1$. Motivated by the papers, we pose the following problem:

\begin{problem}\label{p1}
Which connected signed graphs have the least eigenvalue of the signed distance matrices belonging to $[-2,-1]$?
\end{problem}

\begin{figure}
\centering
\begin{tikzpicture}[x=1.00mm, y=1.00mm, inner xsep=0pt, inner ysep=0pt, outer xsep=0pt, outer ysep=0pt,,scale=0.6]
\path[line width=0mm] (16.70,75.47) rectangle +(217.19,67.37);
\definecolor{L}{rgb}{0,0,0}
\definecolor{F}{rgb}{0,0,0}
\path[line width=0.30mm, draw=L, fill=F] (80.18,139.84) circle (1.00mm);
\path[line width=0.30mm, draw=L, fill=F] (68.74,131.71) circle (1.00mm);
\path[line width=0.30mm, draw=L, fill=F] (91.36,131.71) circle (1.00mm);
\path[line width=0.30mm, draw=L, fill=F] (73.83,120.27) circle (1.00mm);
\path[line width=0.30mm, draw=L, fill=F] (85.77,120.27) circle (1.00mm);
\path[line width=0.30mm, draw=L] (79.93,140.09) -- (69.00,131.71);
\path[line width=0.30mm, draw=L] (68.74,131.20) -- (73.57,120.02);
\path[line width=0.30mm, draw=L] (74.59,120.02) -- (85.77,120.02);
\path[line width=0.30mm, draw=L] (86.28,120.02) -- (91.87,131.71);
\path[line width=0.30mm, draw=L] (80.18,140.09) -- (91.87,131.45);
\path[line width=0.30mm, draw=L, fill=F] (110.68,139.59) circle (1.00mm);
\path[line width=0.30mm, draw=L, fill=F] (110.68,120.53) circle (1.00mm);
\path[line width=0.30mm, draw=L, fill=F] (122.11,129.93) circle (1.00mm);
\path[line width=0.30mm, draw=L, fill=F] (134.06,129.93) circle (1.00mm);
\path[line width=0.30mm, draw=L] (110.68,139.59) -- (110.68,120.53);
\path[line width=0.30mm, draw=L] (110.68,120.27) -- (122.37,129.93);
\path[line width=0.30mm, draw=L] (110.68,139.84) -- (121.86,129.93);
\path[line width=0.30mm, draw=L] (122.11,129.93) -- (134.06,129.93);
\path[line width=0.30mm, draw=L, fill=F] (162.01,139.59) circle (1.00mm);
\path[line width=0.30mm, draw=L, fill=F] (151.59,129.93) circle (1.00mm);
\path[line width=0.30mm, draw=L, fill=F] (162.01,119.76) circle (1.00mm);
\path[line width=0.30mm, draw=L, fill=F] (172.18,129.67) circle (1.00mm);
\path[line width=0.30mm, draw=L, fill=F] (182.09,129.67) circle (1.00mm);
\path[line width=0.30mm, draw=L] (161.76,139.84) -- (151.59,129.67);
\path[line width=0.30mm, draw=L] (151.85,129.93) -- (161.76,119.76);
\path[line width=0.30mm, draw=L] (161.76,139.84) -- (171.67,129.67);
\path[line width=0.30mm, draw=L] (171.93,129.67) -- (161.76,119.76);
\path[line width=0.30mm, draw=L] (172.18,129.67) -- (181.84,129.67);
\path[line width=0.30mm, draw=L, fill=F] (49.94,129.93) circle (1.00mm);
\path[line width=0.30mm, draw=L, fill=F] (19.70,129.93) circle (1.00mm);
\path[line width=0.30mm, draw=L, fill=F] (30.12,129.93) circle (1.00mm);
\path[line width=0.30mm, draw=L, fill=F] (40.03,129.93) circle (1.00mm);
\path[line width=0.30mm, draw=L] (19.70,129.93) -- (50.19,129.93);
\path[line width=0.30mm, draw=L] (110.42,90.03) -- (140.92,90.03);
\path[line width=0.30mm, draw=L, fill=F] (110.42,90.03) circle (1.00mm);
\path[line width=0.30mm, draw=L, fill=F] (120.84,90.03) circle (1.00mm);
\path[line width=0.30mm, draw=L, fill=F] (130.75,90.03) circle (1.00mm);
\path[line width=0.30mm, draw=L, fill=F] (140.67,90.03) circle (1.00mm);
\path[line width=0.30mm, draw=L, fill=F] (121.10,104.26) circle (1.00mm);
\path[line width=0.30mm, draw=L] (161.00,104.52) -- (161.00,90.03);
\path[line width=0.30mm, draw=L] (150.58,90.03) -- (181.07,90.03);
\path[line width=0.30mm, draw=L, fill=F] (150.58,90.03) circle (1.00mm);
\path[line width=0.30mm, draw=L, fill=F] (161.00,90.03) circle (1.00mm);
\path[line width=0.30mm, draw=L, fill=F] (170.91,90.03) circle (1.00mm);
\path[line width=0.30mm, draw=L, fill=F] (180.82,90.03) circle (1.00mm);
\path[line width=0.30mm, draw=L, fill=F] (161.00,104.52) circle (1.00mm);
\path[line width=0.30mm, draw=L] (120.84,104.01) -- (120.84,90.03);
\path[line width=0.30mm, draw=L, fill=F] (49.94,90.03) circle (1.00mm);
\path[line width=0.30mm, draw=L] (19.70,90.03) -- (50.19,90.03);
\path[line width=0.30mm, draw=L, fill=F] (19.70,90.03) circle (1.00mm);
\path[line width=0.30mm, draw=L, fill=F] (30.12,90.03) circle (1.00mm);
\path[line width=0.30mm, draw=L, fill=F] (40.03,90.03) circle (1.00mm);
\path[line width=0.30mm, draw=L, fill=F] (25.29,104.26) circle (1.00mm);
\path[line width=0.30mm, draw=L] (25.29,104.26) -- (19.70,90.03);
\path[line width=0.30mm, draw=L] (25.29,104.26) -- (30.12,90.03);
\path[line width=0.30mm, draw=L, fill=F] (131.01,104.52) circle (1.00mm);
\path[line width=0.30mm, draw=L] (131.01,104.52) -- (131.01,90.03);
\path[line width=0.30mm, draw=L] (120.84,104.01) -- (130.50,90.03);
\path[line width=0.30mm, draw=L] (131.01,104.01) -- (120.84,89.77);
\path[line width=0.30mm, draw=L] (74.08,104.01) -- (83.74,90.03);
\path[line width=0.30mm, draw=L] (63.66,90.03) -- (94.16,90.03);
\path[line width=0.30mm, draw=L, fill=F] (63.66,90.03) circle (1.00mm);
\path[line width=0.30mm, draw=L, fill=F] (74.08,90.03) circle (1.00mm);
\path[line width=0.30mm, draw=L, fill=F] (83.99,90.03) circle (1.00mm);
\path[line width=0.30mm, draw=L, fill=F] (93.90,90.03) circle (1.00mm);
\path[line width=0.30mm, draw=L, fill=F] (74.34,104.26) circle (1.00mm);
\path[line width=0.30mm, draw=L] (74.08,104.01) -- (74.08,90.03);
\path[line width=0.30mm, draw=L, fill=F] (84.25,104.52) circle (1.00mm);
\path[line width=0.30mm, draw=L] (84.25,104.52) -- (84.25,90.03);
\path[line width=0.30mm, draw=L] (74.08,104.01) -- (84.50,104.01);
\path[line width=0.30mm, draw=L] (84.25,104.01) -- (74.08,89.77);
\path[line width=0.30mm, draw=L, fill=F] (211.06,90.03) circle (1.00mm);
\path[line width=0.30mm, draw=L] (200.64,90.03) -- (231.14,90.03);
\path[line width=0.30mm, draw=L, fill=F] (200.64,90.03) circle (1.00mm);
\path[line width=0.30mm, draw=L, fill=F] (220.97,90.03) circle (1.00mm);
\path[line width=0.30mm, draw=L, fill=F] (230.89,90.03) circle (1.00mm);
\path[line width=0.30mm, draw=L, fill=F] (200.39,104.01) circle (1.00mm);
\path[line width=0.30mm, draw=L, fill=F] (210.81,123.83) circle (1.00mm);
\path[line width=0.30mm, draw=L] (210.81,138.32) -- (210.81,123.83);
\path[line width=0.30mm, draw=L] (200.39,123.83) -- (230.89,123.83);
\path[line width=0.30mm, draw=L, fill=F] (200.39,123.83) circle (1.00mm);
\path[line width=0.30mm, draw=L, fill=F] (220.72,123.83) circle (1.00mm);
\path[line width=0.30mm, draw=L, fill=F] (230.63,123.83) circle (1.00mm);
\path[line width=0.30mm, draw=L, fill=F] (210.81,138.32) circle (1.00mm);
\path[line width=0.30mm, draw=L] (210.81,138.32) -- (200.64,123.83);
\path[line width=0.30mm, draw=L] (210.81,138.06) -- (220.97,123.83);
\path[line width=0.30mm, draw=L] (200.39,103.75) -- (200.39,89.77);
\draw(31.13,112.41) node[anchor=base west]{\fontsize{11.38}{13.66}\selectfont $P_4$};
\draw(75.35,112.41) node[anchor=base west]{\fontsize{11.38}{13.66}\selectfont $C_5$};
\draw(121.61,112.41) node[anchor=base west]{\fontsize{11.38}{13.66}\selectfont $H_1$};
\draw(157.95,112.41) node[anchor=base west]{\fontsize{11.38}{13.66}\selectfont $H_2$};
\draw(211.57,112.41) node[anchor=base west]{\fontsize{11.38}{13.66}\selectfont $H_3$};
\draw(31.13,78.59) node[anchor=base west]{\fontsize{11.38}{13.66}\selectfont $H_4$};
\draw(75.35,78.59) node[anchor=base west]{\fontsize{11.38}{13.66}\selectfont $H_5$};
\draw(121.61,78.59) node[anchor=base west]{\fontsize{11.38}{13.66}\selectfont $H_6$};
\draw(157.95,78.59) node[anchor=base west]{\fontsize{11.38}{13.66}\selectfont $H_7$};
\draw(211.57,78.59) node[anchor=base west]{\fontsize{11.38}{13.66}\selectfont $H_8$};
\end{tikzpicture}%
\caption{The graphs $P_4$, $C_5$, $H_1$-$H_8$.}\label{Fig. 2}
\end{figure}
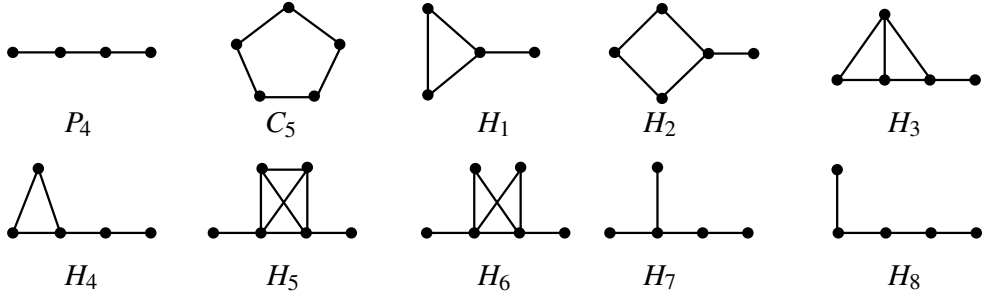

\begin{remark}\label{rek1.1}
If $(K_n,\sigma)$ is unbalanced, then the shortest negative cycle must be $(C_3,\sigma)$. Otherwise, suppose that
$(C_k,\sigma)$ is a shortest negative cycle of $(K_n,\sigma)$ and $k\geq4$.
Let $C_k=v_1v_2v_3v_4\ldots v_kv_1$. Note that $(K_n,\sigma)$ is compatible, thus $\sigma_{uv}=\sigma'_{uv}$. If $\sigma_{12}\sigma_{23}=-1$, then $\sigma_{13}=-1$, it implies that $C_{k-2}=v_1v_3v_4\ldots v_kv_1$ is a negative cycles with length $k-1$, a contradiction. If $\sigma_{12}\sigma_{23}=1$, then $\sigma_{14}=1$, it implies that $C'=v_1v_3v_4\ldots v_kv_1$ is also a negative cycles with length $k-1$, a contradiction. And $\lambda_3(D^{\pm}((K_3,\sigma)))=-2$ when $\sigma(K_3)=-1$.
\end{remark}

\begin{lemma} \cite{H.Q. Lin}.\label{lm1.2} Let $G$ be a connected graph and $D(G)$ be the distance matrix of $G$. Then $\lambda_n(D(G))=-2$ with multiplicity $n-k$ if and only if $G$ is a complete $k$-partite graph for $2\leq k\leq n-1$.
\end{lemma}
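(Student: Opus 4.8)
\emph{Overview.} The plan is to prove the two directions separately, using the Cauchy Interlace Theorem (Lemma \ref{lm1}) to reduce the forward direction, which concerns an infinite family of graphs, to two small eigenvalue computations.

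\emph{Sufficiency.} Suppose $G=K_{n_1,\dots,n_k}$ with $n=\sum_{i=1}^{k}n_i$. Any two distinct vertices are at distance $1$ (different classes) or $2$ (same class), so $D(G)=2(J_n-I_n)-A(G)$; writing $A(G)=J_n-I_n-\bar A$ with $\bar A=\bigoplus_{i=1}^{k}(J_{n_i}-I_{n_i})$ the adjacency matrix of the complement of $G$ (a disjoint union of cliques), I obtain $D(G)+2I_n=J_n+\bigoplus_{i=1}^{k}J_{n_i}$, a sum of two positive semidefinite matrices. Hence $\lambda_n(D(G))\ge-2$, and a vector lies in the kernel of $D(G)+2I_n$ exactly when its coordinate sum on each colour class vanishes; this kernel has dimension $\sum_{i=1}^{k}(n_i-1)=n-k$. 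Thus $\lambda_n(D(G))=-2$ with multiplicity exactly $n-k$, and the hypothesis $2\le k\le n-1$ is precisely what makes this multiplicity positive.

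\emph{Necessity.} Assume $G$ is connected with $\lambda_n(D(G))=-2$. First I would rule out $\mathrm{diam}(G)\ge3$: if two vertices are at distance $3$, a shortest path through them is an induced $P_4$ whose vertices have pairwise distances $1,1,1,2,2,3$ in $G$, so $D(P_4)$ is a principal submatrix of $D(G)$; splitting $D(P_4)$ by its reversal symmetry yields the eigenvalues $2\pm\sqrt{10}$ and $-2\pm\sqrt{2}$, so $\lambda_{\min}(D(P_4))=-2-\sqrt{2}<-2$, and Lemma \ref{lm1} gives $\lambda_n(D(G))<-2$, a contradiction. Diameter $1$ is also impossible, since $D(K_n)=J_n-I_n$ has least eigenvalue $-1$; hence $\mathrm{diam}(G)=2$. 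Next I claim $G$ has no induced $\overline{P_3}$ (three vertices spanning exactly one edge): if $u,v,w$ formed such a triple with $uv\in E(G)$ and $uw,vw\notin E(G)$, then since $\mathrm{diam}(G)=2$ the principal submatrix of $D(G)$ on $\{u,v,w\}$ is $\left(\begin{smallmatrix}0&1&2\\1&0&2\\2&2&0\end{smallmatrix}\right)$, with characteristic polynomial $-(\lambda+1)(\lambda^2-\lambda-8)$ and least eigenvalue $\frac{1-\sqrt{33}}{2}<-2$, again contradicting Lemma \ref{lm1}. A graph with no induced $\overline{P_3}$ has complement with no induced $P_3$, so the complement is a disjoint union of cliques and $G=K_{n_1,\dots,n_k}$ is complete multipartite; connectivity forces $k\ge2$, and $\lambda_n(D(G))=-2\ne-1$ forces $k\le n-1$. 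The multiplicity claim then follows from the sufficiency computation.

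\emph{Main difficulty.} The individual eigenvalue evaluations are routine; the crux is the structural observation that complete multipartiteness is exactly the forbidding of an induced $\overline{P_3}$, which lets every diameter-$2$ counterexample be eliminated by a single $3\times3$ interlacing argument, mirroring the $D(P_4)$ argument that disposes of diameter $\ge3$. A minor point to settle is to check the small orders $n\le3$ directly and to observe that $k=1$ (an edgeless graph) is ruled out by connectivity, so that $2\le k\le n-1$ is the correct range.
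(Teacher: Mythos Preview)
Your argument is correct. Note, however, that the paper does not supply its own proof of this lemma: it is quoted verbatim from \cite{H.Q. Lin}, so there is no in-paper proof to compare against line by line.

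That said, the paper does allude to the method of \cite{H.Q. Lin} when proving the signed analogue (Theorem~\ref{thm12}), and the flavour there is slightly different from yours. The cited approach forbids, via interlacing, a short list of small configurations ($P_4$, $C_5$, and the graphs $H_1$, $H_2$ of Figure~\ref{Fig. 2}) and then argues structurally from those exclusions that $G$ must be complete multipartite. Your route is tighter: after disposing of diameter $\ge 3$ with $D(P_4)$, a single $3\times 3$ interlacing on an induced $\overline{P_3}$ suffices, because ``no induced $\overline{P_3}$'' is already equivalent to the complement being a disjoint union of cliques. This avoids any separate treatment of girth or of pendant-triangle obstructions. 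Conversely, the forbidden-subgraph list in \cite{H.Q. Lin} ports more directly to the signed setting the present paper needs, where one must track the sign of each small submatrix; your $\overline{P_3}$ shortcut, while cleaner for the unsigned statement, would not by itself organise the signed case.

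On the sufficiency side, your identity $D(G)+2I_n=J_n+\bigoplus_i J_{n_i}$ and the observation that the kernel of a sum of positive semidefinite matrices is the intersection of the kernels give the multiplicity $n-k$ immediately; this is essentially the same computation one finds in \cite{H.Q. Lin}, just phrased more invariantly.
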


\begin{lemma}\label{lem1.3}
Let $\Sigma=(K_{n_1,n_2,\ldots,n_k},\sigma)$ be an unbalanced signed completed $k$-partite graph, where $k\geq3$. If $\Sigma$ satisfies the following conditions:
\begin{enumerate}
  \item [(i)] The paths of length two for any two nonadjacent vertices have the same sign;
  \item [(ii)] $\sigma(K_3)=-1$ for every $K_3\subset K_{n_1,n_2,\ldots,n_k},$
\end{enumerate}
then
\begin{enumerate}
  \item [(1)] $D^{max}(\Sigma)=D^{min}(\Sigma)=D^{\pm}(\Sigma)$, that is, $\Sigma$ is compatible;
  \item [(2)] $-2$ is an eigenvalue of $D^{\pm}(\Sigma)$ with multiplicity $n-k+1(i.e.,n-2)$ for $k=3$;
  \item [(3)] $-2$ is an eigenvalue of $D^{\pm}(\Sigma)$ with multiplicity $n-k$ for $k\geq4$.
\end{enumerate}
\end{lemma}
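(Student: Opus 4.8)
The plan is to exploit the fact that the signed distance spectrum is a switching invariant and to reduce $\Sigma$ to the complete $k$-partite graph $K_{n_1,\dots,n_k}$ with \emph{all} edges negative, for which the computation is transparent; throughout, recall that in a complete $k$-partite graph two distinct vertices are at distance $1$ if they lie in different parts and at distance $2$ if they lie in the same part. Part (1) is then immediate: if $u\sim v$ the only shortest $u$–$v$ path is the edge $uv$, so $\sigma_{max}(u,v)=\sigma_{min}(u,v)=\sigma(uv)$; if $u\not\sim v$ every shortest path has length $2$ and, by hypothesis (i), all of them carry the same sign, so again $\sigma_{max}(u,v)=\sigma_{min}(u,v)$. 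Hence $\Sigma$ is compatible and $D^{max}(\Sigma)=D^{min}(\Sigma)=D^{\pm}(\Sigma)$. Moreover, if $\Sigma'$ is obtained from $\Sigma$ by switching at a vertex subset with switching function $\theta:V\to\{\pm1\}$, then every $u$–$v$ path changes its sign by the factor $\theta(u)\theta(v)$ (internal contributions cancel in pairs), whence $D^{\pm}(\Sigma')=\Theta D^{\pm}(\Sigma)\Theta$ with $\Theta=\mathrm{diag}(\theta)$; so the distance spectrum, with multiplicities, is preserved under switching, and so is compatibility.

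The key structural step is the claim that, under (i) and (ii), $\Sigma$ is switching equivalent to $-K_{n_1,\dots,n_k}$. To prove it, fix $r\in V_1$ and $s\in V_2$ and switch so that the double-star spanning tree $\mathcal S$ with edge set $\{rs\}\cup\{rx:x\notin V_1\cup\{s\}\}\cup\{sy:y\in V_1\setminus\{r\}\}$ becomes all-negative (possible in any connected signed graph). Each non-tree edge $e=uv$ then falls into one of two cases: if both $u,v$ lie outside $V_1$ (in distinct parts) its fundamental cycle with respect to $\mathcal S$ is a triangle through $r$, while if exactly one of $u,v$ lies in $V_1\setminus\{r\}$ its fundamental cycle is a quadrilateral on $\{u,v,r,s\}$ whose diagonal pair $\{u,r\}$ lies in a single part. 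In the first case the cycle sign is $-1$ by (ii), and since its two tree edges are negative, $\sigma(e)=-1$; in the second case the cycle sign is the square of the common sign of the two length-two $u$–$r$ paths contained in it (equal by (i)), hence it equals $1$, and since its three tree edges are negative, again $\sigma(e)=-1$. Thus after switching all edges are negative, proving the claim. (Alternatively one shows that (i) forces $\sigma(uw)=\delta_{ij}\tau_i(u)\tau_j(w)$ for suitable $\tau_i:V_i\to\{\pm1\}$ and a symmetric $\delta$ on the index set of parts, that (ii) forces every triangle of $(K_k,\delta)$ to be negative, that such a signed $K_k$ is switching equivalent to $-K_k$, and that switching the corresponding parts $V_i$ of $\Sigma$ again yields $-K_{n_1,\dots,n_k}$.)

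It now suffices to analyse $D^{\pm}(-K_{n_1,\dots,n_k})$, whose same-part entries equal $2$ and whose different-part entries equal $-1$, so that $D^{\pm}+2I=M$ with $M_{uv}=2$ if $u,v$ lie in the same part and $M_{uv}=-1$ otherwise. Letting $P$ be the $n\times k$ matrix whose $i$th column is the indicator of $V_i$, we have $M=P\tilde B P^{\top}$ with $\tilde B=3I_k-J_k$. Since $P$ has full column rank, $\mathrm{rank}(M)=\mathrm{rank}(\tilde B)=k-\dim\ker(3I_k-J_k)$, and $3I_k-J_k$ is singular exactly when $k=3$, with one-dimensional kernel spanned by $\mathbf 1_k$. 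Consequently the multiplicity of $0$ as an eigenvalue of $M$, equivalently of $-2$ as an eigenvalue of $D^{\pm}(\Sigma)$, equals $n-\mathrm{rank}(\tilde B)$, which is $n-k$ when $k\ge4$ and $n-2=n-k+1$ when $k=3$. This gives (2) and (3).

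The main obstacle is precisely the structural reduction "$\Sigma$ is switching equivalent to $-K_{n_1,\dots,n_k}$": it is here that both hypotheses (i) and (ii) are genuinely used, and the delicate point is choosing a spanning tree for which every fundamental cycle is short enough — a triangle, or a quadrilateral with a monochromatic diagonal — that its sign is pinned down by (i) and (ii). Everything after that reduction is a routine rank computation.
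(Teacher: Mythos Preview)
Your proof is correct and takes a genuinely different, more conceptual route than the paper.  The paper proceeds by direct matrix computation: it partitions each part $V_i$ into three pieces $\{v_{i1}\}\cup V_{i2}\cup V_{i3}$ according to the sign of the length-two paths from a fixed base vertex $v_{i1}$, writes $D^{\pm}(\Sigma)$ blockwise from these data, and then row-reduces to obtain $\det(\lambda I-D^{\pm})=(\lambda+2)^{n-k}f(\lambda)$ with an explicit $k\times k$ determinant $f$; evaluating $f(-2)$ gives $\prod_i n_i\cdot(-3)^{k-1}(k-3)$, and a further factorisation handles $k=3$.  You instead first argue that, up to switching, $\Sigma$ is the all-negative complete $k$-partite graph, and then observe that $D^{\pm}+2I=P(3I_k-J_k)P^{\top}$ with $P$ the part-indicator matrix, so the multiplicity of $-2$ is $n-\mathrm{rank}(3I_k-J_k)$.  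Your argument is shorter and explains \emph{why} $k=3$ is special (it is exactly where $3I_k-J_k$ becomes singular), and the switching-to-a-canonical-form idea makes the calculation almost trivial; the paper's approach, by contrast, avoids the structural reduction entirely and keeps everything at the level of determinants.  Your spanning-tree argument for the switching equivalence is the nontrivial step, and it is carried out correctly: the double star centred at $r$ and $s$ is chosen precisely so that every fundamental cycle is either a triangle through $r$ (sign forced by (ii)) or a $4$-cycle with the nonadjacent diagonal $\{u,r\}\subset V_1$ (sign forced to $+1$ by (i)).
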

\renewcommand\proofname{\bf{Proof}}
\begin{proof}
Let $V=V_1\cup V_2\cup\ldots\cup V_k$, where $V_i$ is an independent set and $\mid V_i\mid=n_i$ for $1\leq i\leq k$. Since the diameter of $\Sigma$ is 2, $\Sigma$ is compatible by (i) immediately. We partition $V_i$ to $v_{i1}$, $V_{i2}$ and $V_{i3}$ such that the paths of length two between $v_{i1}$ and $u$ with positive sign, the paths of length two between $v_{i1}$ and $w$ with negative sign, where $u\in V_{i2}$ and $w\in V_{i3}$. Let $\mid V_{i2}\mid=s_i$ for $1\leq i\leq k$ . Then
$$D^{\pm}(\Sigma)=\left(\begin{array}{cccc}
               A_{n_1\times n_1} & A_{n_1\times n_2} & \cdots & A_{n_1\times n_k} \\
               A^T_{n_1\times n_2} & A_{n_2\times n_2} & \cdots & A_{n_2\times n_k} \\
               \vdots & \vdots & \ddots & \vdots \\
               A^T_{n_1\times n_k} & A^T_{n_2\times n_k} & \cdots & A_{n_k\times n_k} \\
             \end{array}
           \right),$$
where
$$A_{n_i\times n_i}=\left(\begin{array}{ccc}
               0 & 2J_{1\times s_i} & -2J_{1\times (n_i-s_i-1)} \\
               2J_{s_i\times1} & 2(J_{s_i\times s_i}-I_{s_i\times s_i}) & -2J_{s_i\times (n_i-s_i-1)} \\
               -2J_{(n_i-s_i-1)\times1} & -2J_{(n_i-s_i-1)\times s_i} & 2(J_{(n_i-s_i-1)\times (n_i-s_i-1)}-I_{(n_i-s_i-1)\times (n_i-s_i-1)}) \\
             \end{array}
           \right)$$
and
$$A_{n_i\times n_j}=\sigma(v_{i1}v_{j1})\left(\begin{array}{ccc}
               1 & J_{1\times s_j} & -J_{1\times (n_j-s_j-1)} \\
               J_{s_i\times1} & J_{s_i\times s_j} & -J_{s_i\times (n_j-s_j-1)} \\
               -J_{(n_i-s_i-1)\times1} & -J_{(n_i-s_i-1)\times s_j} & J_{(n_i-s_i-1)\times (n_j-s_j-1)} \\
             \end{array}
           \right).$$
Then $det(\lambda I-D^{\pm}(\Sigma))=(\lambda+2)^{n-k}f(\lambda)$, where
$$f(\lambda)=\begin{vmatrix}
                        \lambda-2n_1+2 & -\sigma(v_{11}v_{21})n_2 & \cdots & -\sigma(v_{11}v_{k1})n_k \\
                        -\sigma(v_{11}v_{21})n_1 & \lambda-2n_2+2 & \cdots & -\sigma(v_{21}v_{k1})n_k \\
                        \vdots & \vdots & \vdots & \vdots \\
                        -\sigma(v_{11}v_{k1})n_1 & -\sigma(v_{21}v_{k1})n_2 & \cdots & \lambda-2n_k+2
                        \end{vmatrix}.$$
Note that $\sigma(v_{i1}v_{j1})=-\sigma(v_{11}v_{i1})\sigma(v_{11}v_{j1})$ by (ii) for $1<i\neq j\leq k$ and $\sigma^2(v_{i1}v_{j1})=1$ for $1\leq i, j\leq k$, thus
$$f(\lambda)=\begin{vmatrix}
                        \lambda-2n_1+2 & -\sigma(v_{11}v_{21})n_2 & \cdots & -\sigma(v_{11}v_{k1})n_k \\
                        -\sigma(v_{11}v_{21})n_1 & \lambda-2n_2+2 & \cdots & \sigma(v_{11}v_{21})\sigma(v_{11}v_{k1})n_k \\
                        \vdots & \vdots & \vdots & \vdots \\
                        -\sigma(v_{11}v_{k1})n_1 & \sigma(v_{11}v_{21})\sigma(v_{11}v_{31})n_2 & \cdots & \lambda-2n_k+2
                        \end{vmatrix}.$$
And
\begin{align*}
f(-2)&=\begin{vmatrix}
                        -2n_1 & -\sigma(v_{11}v_{21})n_2 & \cdots & -\sigma(v_{11}v_{k1})n_k \\
                        -\sigma(v_{11}v_{21})n_1 & -2n_2 & \cdots & \sigma(v_{11}v_{21})\sigma(v_{11}v_{k1})n_k \\
                        \vdots & \vdots & \vdots & \vdots \\
                        -\sigma(v_{11}v_{k1})n_1 & \sigma(v_{11}v_{21})\sigma(v_{11}v_{31})n_2 & \cdots & -2n_k
                        \end{vmatrix}\\
\ &=\Pi_{i=1}^kn_k \begin{vmatrix}
                        -2 & -1 & \cdots & -1 \\
                        -1 & -2 & \cdots & 1 \\
                        \vdots & \vdots & \vdots & \vdots \\
                        -1 & 1 & \cdots & -2
                        \end{vmatrix} \\
\ &=\Pi_{i=1}^kn_k (-3)^{k-1}(k-3),
\end{align*}
$f(-2)=0$ if and only if $k=3$ and this shows that (3) is correct.

If $k=3$, then $det(\lambda I-D^{\pm}((K_{n_1,n_2,,n_3},\sigma))=(\lambda+2)^{n-3}g(\lambda)$, where
\begin{align*}
g(\lambda)&=\begin{vmatrix}
                      \lambda-2n_1+2 & -\sigma(v_{11}v_{21})n_2 & -\sigma(v_{11}v_{31})n_3 \\
                        -\sigma(v_{11}v_{21})n_1 & \lambda-2n_2+2 & \sigma(v_{11}v_{21})\sigma(v_{11}v_{31})n_3 \\
                        -\sigma(v_{11}v_{31})n_1 & \sigma(v_{11}v_{21})\sigma(v_{11}v_{31})n_2 & \lambda-2n_3+2
                        \end{vmatrix}\\
\ &=(\lambda+2)h(\lambda),
\end{align*}
and $h(\lambda)={\lambda}^{2}+ \left( -2\,n+4 \right) \lambda+3\,n_{{1}}n_{{2}}+3\,n_{
{1}}n_{{3}}+3\,n_{{2}}n_{{3}}-4\,n+4$, note that $h(-2)=3(n_1n_2+n_1n_3+n_2n_3)\neq0$. It means that the multiplicity of $-2$ is $n-2$ and (2) holds.                This completes the proof.
\end{proof}

\begin{lemma}\cite{S.K. Hameed}\label{lem1.1}
The following properties of a signed graph $\Sigma$ are equivalent.
\begin{enumerate}
  \item [(i)] $\Sigma$ is balanced;
  \item [(ii)] $D^{max}(\Sigma)$ is cospectral with $D(G)$;
  \item [(iii)] $D^{min}(\Sigma)$ is cospectral with $D(G)$.
\end{enumerate}
\end{lemma}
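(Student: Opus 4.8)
The plan is to run the whole argument through the \emph{switching} (Harary) characterisation of balance: $\Sigma=(G,\sigma)$ is balanced if and only if there is a vertex map $\theta\colon V(G)\to\{+1,-1\}$ with $\sigma(uv)=\theta(u)\theta(v)$ for every edge $uv$ (equivalently, $\sigma$ is switching-equivalent to the all-positive signature). For the forward implications $(\mathrm{i})\Rightarrow(\mathrm{ii})$ and $(\mathrm{i})\Rightarrow(\mathrm{iii})$ I would take such a $\theta$ and observe that the sign of any $u$--$v$ path $P=uw_1\cdots w_{k-1}v$ is $\sigma(P)=\prod_{e\in E(P)}\sigma(e)=\theta(u)\theta(v)$, the internal factors $\theta(w_i)^2=1$ cancelling; hence the sign along a path depends only on its endpoints, so $\sigma_{max}(u,v)=\sigma_{min}(u,v)=\theta(u)\theta(v)$ for all $u\ne v$. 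Writing $\Theta=\mathrm{diag}(\theta(v_1),\dots,\theta(v_n))$ this gives $D^{max}(\Sigma)=D^{min}(\Sigma)=\Theta\,D(G)\,\Theta$, and since $\Theta^2=I$ the matrix $\Theta D(G)\Theta$ is similar to $D(G)$, hence cospectral with it.

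For $(\mathrm{ii})\Rightarrow(\mathrm{i})$ (and, by the same argument, $(\mathrm{iii})\Rightarrow(\mathrm{i})$) the point is that cospectrality can be tested moment by moment, and the signature first becomes visible at the \emph{third} moment: $|d_{max}(u,v)|=d_{uv}$ forces $D^{max}(\Sigma)$ and $D(G)$ to have equal entrywise absolute values, so $\mathrm{tr}\!\big((D^{max}(\Sigma))^2\big)=\mathrm{tr}\!\big(D(G)^2\big)$ regardless of $\sigma$ and carries no information. Cospectrality does give $\mathrm{tr}\!\big((D^{max}(\Sigma))^3\big)=\mathrm{tr}\!\big(D(G)^3\big)$, i.e.
\[
\sum_{\{i,j,k\}}\varepsilon_{ijk}\,d_{v_iv_j}d_{v_jv_k}d_{v_kv_i}=\sum_{\{i,j,k\}}d_{v_iv_j}d_{v_jv_k}d_{v_kv_i},
\]
where the sums run over all $3$-element index sets and $\varepsilon_{ijk}=\sigma_{max}(v_i,v_j)\sigma_{max}(v_j,v_k)\sigma_{max}(v_k,v_i)\in\{+1,-1\}$ (equivalently, one compares the coefficient of $\lambda^{n-3}$ in the two characteristic polynomials, which is a signed sum of the $3\times 3$ principal minors $2\,d_{max}(v_i,v_j)d_{max}(v_j,v_k)d_{max}(v_k,v_i)$). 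Since $G$ is connected every $d$-product is strictly positive and $\varepsilon_{ijk}\le 1$, so equality forces $\varepsilon_{ijk}=1$ for every triple.

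To finish, fix $v_1$, set $\theta(v_1)=1$ and $\theta(v_i)=\sigma_{max}(v_1,v_i)$ for $i\ge 2$; because a path and its reversal have the same sign, $\sigma_{max}$ is symmetric, so $\varepsilon_{1ij}=1$ rearranges to $\sigma_{max}(v_i,v_j)=\theta(v_i)\theta(v_j)$ for all $i\ne j$. Restricting to an edge $v_iv_j$ (where $d_{v_iv_j}=1$ makes the single edge the only shortest path, so $\sigma_{max}(v_i,v_j)=\sigma(v_iv_j)$) yields $\sigma(v_iv_j)=\theta(v_i)\theta(v_j)$, and Harary's criterion then gives that $\Sigma$ is balanced; replacing $\sigma_{max},D^{max}$ by $\sigma_{min},D^{min}$ disposes of $(\mathrm{iii})\Rightarrow(\mathrm{i})$, and the cases $n\le 2$ are trivial. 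The one delicate point — and really the entire content of the hard direction — is the observation that the quadratic moment of $D^{max}(\Sigma)$ is signature-blind, so the proof must be pushed to the cubic moment (the $3\times 3$ principal minors) before the sign-consistency condition $\varepsilon_{ijk}\equiv 1$, which is exactly what balance means, can be extracted; everything else (telescoping, switching, symmetry of $\sigma_{max}$) is bookkeeping.
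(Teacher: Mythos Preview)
The paper does not give its own proof of this lemma: it is quoted verbatim from Hameed, Shijin, Soorya, Germina and Zaslavsky \cite{S.K. Hameed}, so there is no in-paper argument to compare your proposal against.

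That said, your argument is correct. The forward direction via the switching similarity $D^{max}(\Sigma)=\Theta D(G)\Theta$ is the standard one. For the converse you exploit the fact that $|d_{max}(u,v)|=d_{uv}$ makes the second moments agree automatically, so cospectrality first bites at $\mathrm{tr}\big((D^{max})^3\big)$; there the termwise inequality $\varepsilon_{ijk}\le 1$ with strictly positive weights $d_{v_iv_j}d_{v_jv_k}d_{v_kv_i}$ (connectivity of $G$) forces every $\varepsilon_{ijk}=1$, and choosing $\theta(v_i)=\sigma_{max}(v_1,v_i)$ then recovers Harary's potential on the edges. The only thing worth tightening is the remark that for an edge $v_iv_j$ the edge is the \emph{unique} shortest path, so indeed $\sigma_{max}(v_i,v_j)=\sigma(v_iv_j)$; you state this, and it is exactly what is needed.
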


The main outcomes of our investigation are as follows.

\begin{thm}\label{thm12}
Let $\Sigma=(G,\sigma)$ be a connected signed graph and $D^{max}$ and $D^{min}$ be two signed distance matrices of $\Sigma$. Then
\begin{enumerate}
   \item[(I)] $\lambda_n(D^{max}(\Sigma))=\lambda_n(D^{min}(\Sigma))=-1$ if and only if $\Sigma$ is a balanced signed completed graph $K_n$;
   \item[(II)] There does not exist a signed graph $\Sigma$ of order $n$ such that $\{\lambda_n(D^{max}(\Sigma)),\lambda_n(D^{min}(\Sigma))\}\in(-2,-1)$;
   \item[(III)] $\lambda_n(D^{max}(\Sigma))=\lambda_n(D^{min}(\Sigma))=-2$ with multiplicity $n-k$ if and only if $\Sigma$ is a balanced signed complete $k$-partite graph with $2\leq k\leq n-1$.
 \end{enumerate}
\end{thm}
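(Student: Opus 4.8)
The plan is to treat all three parts through one mechanism. For the ``if'' directions I would use the cospectrality of Lemma~\ref{lem1.1} together with the known spectra of $D(K_n)$ and $D(K_{n_1,\dots,n_k})$; for the ``only if'' directions I would combine Cauchy interlacing (Lemma~\ref{lm1}) on small principal submatrices with the diameter bounds of Theorem~\ref{lem1.11}(II),(III) (whose proofs do not use the present theorem) and the negative-triangle fact of Remark~\ref{rek1.1}. Sufficiency is immediate: if $\Sigma$ is balanced then $D^{max}(\Sigma)$ and $D^{min}(\Sigma)$ are cospectral with $D(G)$, and $D(K_n)=J-I$ has least eigenvalue $-1$, while for $G=K_{n_1,\dots,n_k}$ with $2\le k\le n-1$ Lemma~\ref{lm1.2} gives least eigenvalue $-2$ of multiplicity $n-k$; in particular no balanced graph has least signed-distance eigenvalue in $(-2,-1)$.

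For the necessity in (I) and (II) I first record the basic estimate. If $d(G)\ge2$ there is a triple $u\sim w\sim v$ with $u\not\sim v$, and the $3\times3$ principal submatrix of $D^{max}$ (and of $D^{min}$) on $\{u,w,v\}$ is, after a diagonal $\pm1$ switching, $\left(\begin{smallmatrix}0&1&2\varepsilon\\1&0&1\\2\varepsilon&1&0\end{smallmatrix}\right)$ with $\varepsilon\in\{-1,1\}$; its least eigenvalue equals $-2$ when $\varepsilon=1$ and $-1-\sqrt{3}$ when $\varepsilon=-1$, so is $\le-2$ in both cases, and hence $\lambda_n(D^{max}),\lambda_n(D^{min})\le-2$ by interlacing. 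If $d(G)=1$ then $G=K_n$ and $D^{max}=D^{min}=A(\Sigma)$: an unbalanced $\Sigma$ contains a negative $K_3$ by Remark~\ref{rek1.1}, whose adjacency matrix has least eigenvalue $-2$, so again $\lambda_n\le-2$; a balanced $\Sigma$ has $A(\Sigma)$ switching-equivalent to $J-I$ with spectrum $\{n-1,(-1)^{(n-1)}\}$. Consequently $\lambda_n(D^{max})=\lambda_n(D^{min})=-1$ forces $G=K_n$ and $\Sigma$ balanced, which is (I); and neither least eigenvalue can lie strictly between $-2$ and $-1$, which is (II).

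For (III), assume $\lambda_n(D^{max})=\lambda_n(D^{min})=-2$. By Theorem~\ref{lem1.11}(II),(III) we get $d(G)\le2$: diameter $3$ would force $\le-2-\sqrt{2}$ and diameter $\ge4$ would force $\le-\tfrac{4}{3}(d-1)\le-4$. The case $d(G)=1$ is impossible: $G=K_n$ balanced gives $-1$, and if unbalanced the negative $K_3$ of Remark~\ref{rek1.1} together with any fourth vertex spans an unbalanced signed $K_4$; every unbalanced signed $K_4$ has least eigenvalue $-\sqrt{5}$ or $-3$ (there are two switching classes), so $\lambda_n<-2$ by interlacing, a contradiction. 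Hence $d(G)=2$, and now the \emph{equality} $\lambda_n(D^{max})=-2$ forces every $3\times3$ submatrix above to have least eigenvalue exactly $-2$, i.e.\ $\varepsilon=1$; applying this to $D^{max}$ and to $D^{min}$ shows that for every non-adjacent pair $u,v$ and every common neighbour $w$ one has $\sigma(uw)\sigma(wv)=\sigma_{max}(u,v)=\sigma_{min}(u,v)$. Thus $\Sigma$ is compatible ($D^{max}=D^{min}=D^{\pm}$) and every length-two path between two non-adjacent vertices carries the same sign --- this is condition (i) of Lemma~\ref{lem1.3}.

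It remains to identify $G$ and the signature. If $G$ is not complete multipartite it contains an induced $K_1\cup K_2$, say $a\not\sim b$, $a\not\sim c$, $bc\in E(G)$; using $d(G)=2$, the common neighbours of $a$ with $b$ and with $c$, and the sign-consistency above, one extracts a small induced signed graph whose $D^{\pm}$-submatrix --- to be displayed explicitly, in the style of the matrices $A_i,B_i,H_i$ in the proof of Theorem~\ref{lem1.11} --- has least eigenvalue strictly below $-2$, a contradiction. So $G=K_{n_1,\dots,n_k}$, and $k\le n-1$ because $k=n$ (that is, $G=K_n$) was already ruled out. If $\Sigma$ is balanced we are done, with the multiplicity pinned to $n-k$ by Lemma~\ref{lm1.2}. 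Suppose $\Sigma$ is unbalanced. If $k=2$ then $G$ is bipartite and the sign-consistency forces every $4$-cycle to be positive, hence every cycle positive, so $\Sigma$ is balanced --- a contradiction. If $k\ge3$, an adaptation of the shortest-negative-cycle argument of Remark~\ref{rek1.1} using the sign-consistency of length-two paths produces a negative triangle in $\Sigma$; for $k\ge4$ that triangle together with a vertex from a fourth part yields an unbalanced induced $K_4$ with $\lambda_n\le-\sqrt{5}<-2$, a contradiction; for $k=3$ the sign-consistency propagates the negative sign to all triangles, so Lemma~\ref{lem1.3} gives that $-2$ has multiplicity $n-k+1\neq n-k$, contradicting the multiplicity hypothesis. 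Therefore $\Sigma$ is a balanced complete $k$-partite graph, completing (III). The routine parts are the sufficiency and the interlacing/switching reductions; the genuine obstacle is the $d=2$ step just sketched --- pinning down the minimal forbidden induced signed subgraphs certifying ``$G$ complete multipartite'' and ``$\Sigma$ balanced'' and verifying, by interlacing, that each of them has least signed-distance eigenvalue below $-2$, which is exactly the sort of case-by-case matrix check carried out in Lemma~\ref{lem1.12} and in the proof of Theorem~\ref{lem1.11}.
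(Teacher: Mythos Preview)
Your proposal follows essentially the same architecture as the paper's proof: sufficiency via Lemma~\ref{lem1.1} and Lemma~\ref{lm1.2}, diameter reduction by interlacing on small principal submatrices, identification of $G$ as complete multipartite, and elimination of the unbalanced cases ($k=2$ via positive $4$-cycles, $k\ge4$ via an unbalanced induced $K_4$, $k=3$ via Lemma~\ref{lem1.3} and the multiplicity hypothesis). Two local differences are worth recording. First, your direct $3\times3$ interlacing argument for ``$d(G)\ge2\Rightarrow\lambda_n\le-2$'' is cleaner than the paper's route for (I)--(II), which proves (III) first and then appeals to Theorem~\ref{lem1.11}(I), whose own proof quotes (III); the paper's ordering is not circular, but your self-contained computation is more transparent. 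Second, the ``complete multipartite'' step you flag as the genuine obstacle is in fact lighter than you anticipate: if $G$ with $d(G)=2$ is not complete multipartite it contains an induced $K_1\cup K_2$ on $\{a,b,c\}$ with $d(a,b)=d(a,c)=2$ and $d(b,c)=1$, and the corresponding $3\times3$ principal submatrix of $D^{max}$ (or $D^{min}$), after diagonal $\pm1$-switching, is $\bigl(\begin{smallmatrix}0&2&2\varepsilon\\2&0&1\\2\varepsilon&1&0\end{smallmatrix}\bigr)$ with characteristic polynomial $\lambda^3-9\lambda-8\varepsilon$; its least root is $\tfrac{1-\sqrt{33}}{2}$ or $\tfrac{-1-\sqrt{33}}{2}$, both strictly below $-2$, so no larger configurations are needed. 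The paper instead splits on the girth, excludes the graphs $H_1,H_2$ of Figure~\ref{Fig. 2} by explicit $4\times4$ and $5\times5$ checks, and then runs Claims~\ref{claim1}--\ref{claim4}; your $\overline{P_3}$ route is shorter once you observe that the $3\times3$ block already suffices.
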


\begin{proof}
(I). Sufficiency is clearly established. If diameter $d\geq2$, then both $\lambda_n(D^{max})$ and $\lambda_n(D^{min})$ are less than or equal to $-2$ by Lemma \ref{lem1.11}, a contradiction. Thus, $d=1$, i.e., $G\cong K_n$ and the proof is completed by Lemma \ref{lem1.1} and Remark \ref{rek1.1}. And there does not exist a signed graph $\Sigma$ of order $n$ such that $\{\lambda_n(D^{max}(\Sigma)),\lambda_n(D^{min}(\Sigma))\}\in(-2,-1)$ clearly, (II) is correct.

(III). The sufficiency holds immediately by Lemmas \ref{lem1.1} and \ref{lm1.2}.
Conversely, we first claim that diameter $d\leq2$. Otherwise, $(P_4,\sigma)$ is an induced subgraph of $\Sigma$, and both $\lambda_n(D^{max}(\Sigma))$ and $\lambda_n(D^{min}(\Sigma))$ less than $-2$ by Lemma \ref{lm1}. Now, we derive the proof into the following cases.

\noindent{\bf{$\underline{\mbox{Case 1. }}$}} $\Sigma$ is a signed tree.
Obviously, $\Sigma$ is compatible, that is, $D^{max}(\Sigma)=D^{min}(\Sigma)=D^{\pm}(\Sigma)$.
Thus $\Sigma=(K_{1,n-1},\sigma)$ and the conclusion is valid by Lemmas \ref{lem1.1} and \ref{lm1.2}.

\noindent{\bf{$\underline{\mbox{Case 2.}}$}} $\Sigma$ contains signed cycles.
Since the least eigenvalues of the principle submatrices indexed by $V(P_4)$ or $V(C_5)$ are always less than $-2$, the girth $g\leq4$.

\noindent{\bf{$\underline{\mbox{Subcase 2.1.}}$}}\label{case2} $g=3$.
If $\Sigma$ is an unbalanced complete graph, the the shortest negative cycle must be $C_3$ by Remark \ref{rek1.1}. If $n\geq4$, then $(K_4,\sigma)\subset \Sigma$ and $D^{max}(\Sigma)=D^{min}(\Sigma)=D^{\pm}(\Sigma)$ must contain one of the following matrices as a principle submatrix,
$$\left(
             \begin{array}{ccccc}
               0 & -1 & 1 & i \\
               -1 & 0 & 1 & j \\
               1 & 1 & 0 & k \\
               i & j & k & 0  \\
             \end{array}
           \right)~~~~
 \left(
             \begin{array}{ccccc}
               0 & -1 & -1 & i \\
               -1 & 0 & -1 & j \\
               -1 & -1 & 0 & k \\
               i & j & k & 0  \\
             \end{array}
           \right),$$
where $\mid i\mid=\mid j\mid=\mid k\mid=1$. It is a pity that the least eigenvalues of the above matrices are always less than $-2$, a contradiction. Thus, $\Sigma$ is an unbalanced $K_3$ and its eigenvalues are $-2$ and $1$ with multiplicity $2$, a contradiction.

Thus $\Sigma$ is not an unbalanced complete graph. Since the least eigenvalue of the principle submatrix indexed by $V(H_1)$ is always less than $-2$ and $n\geq4$, then $V(\Sigma)$ can be partitioned into four parts $V_1, V_2, V_3$ and $V_4$, where $V_1=\{u|uv_2,uv_3\in E(\Sigma)\}$, $V_2=\{u|uv_1,uv_3\in E(\Sigma)\}$, $V_3=\{u|uv_1,uv_2\in E(\Sigma)\}$ and $V_4=\{u|uv_1, uv_2,uv_3\in E(\Sigma)\}$. Clearly, $v_1\in V_1$, $v_2\in V_2$ and $v_3\in V_3$. Meanwhile, $V_i$ is an independent set and every vertex of $V_j$ is adjacent to every vertex of $V_k$ for $1\leq i\leq3$ and $1\leq j\neq k\leq4$. We partition $V_4$ to independent sets $V_{41}\cup\cdots \cup V_{4t}$, then $uv\in E$ for any $u\in V_{4i}$ and $v\in V_{4j}$ for $1\leq i\neq j\leq t$ and $t\geq2$ through the same methods in \cite{H.Q. Lin}. Therefore, $\Sigma$ is a signed completed multipartite graph $(K_{n_1,n_2,\ldots,n_k},\sigma)$. If $\Sigma$ is balanced, then the proof is completed by Lemmas \ref{lem1.1} and \ref{lm1.2}. Next, let's assume $\Sigma$ is unbalanced. Then $\Sigma$ must be an unbalanced completed $3$-partite graph since $\Sigma$ cannot contain an unbalanced $K_4$ as an induced subgraph. Let $V=V_1\cup V_2\cup V_3$, where $\mid V_i\mid= n_i\geq1$ and $n=n_1+n_2+n_3\geq5$.

\begin{claim}\label{claim1}
The paths of length two for any two nonadjacent vertices have the same sign.
\end{claim}
Otherwise, $D^{max}(\Sigma)$ must contain one of $\{A_6,A_7\}$ as a principle submatrix,
$$A_6=\left(
             \begin{array}{ccccc}
               0 & 1 & 2 & 1 \\
               1 & 0 & -1 & i \\
               2 & -1 & 0 & 1 \\
               1 & i & 1 & 0  \\
             \end{array}
           \right)~~~\mbox{or}~~~
 A_7=\left(
             \begin{array}{ccccc}
               0 & 1 & 2 & -1 \\
               1 & 0 & -1 & i \\
               2 & -1 & 0 & -1 \\
               -1 & i & -1 & 0  \\
             \end{array}
           \right)$$
and $D^{min}(\Sigma)$ must contain one of $\{A_8,A_9\}$ as a principle submatrix,
$$A_8=\left(
             \begin{array}{ccccc}
               0 & 1 & -2 & 1 \\
               1 & 0 & -1 & j \\
               -2 & -1 & 0 & 1 \\
               1 & j & 1 & 0  \\
             \end{array}
           \right)~~\mbox{or}~~
 A_9\left(
             \begin{array}{ccccc}
               0 & 1 & -2 & -1 \\
               1 & 0 & -1 & j \\
               -2 & -1 & 0 & -1 \\
               -1 & j & -1 & 0  \\
             \end{array}
           \right),$$
where $i=\pm1,2$, $j=\pm1,-2$. Yet, sad to say, the least eigenvalues of the above matrices are less than $-2$, a contradiction.

\begin{claim}\label{claim2}
$(K_{n_i,n_j},\sigma)$ is balanced for $n_i,n_j\geq2$, $1\leq i\neq j\leq 3$.
\end{claim}

Otherwise, assume that $C_{2k}=v_1v_2\ldots v_{2k}$ is a shortest negative cycle, then $k\geq3$ since $\sigma(C_4)=1$ for any $(C_4,\sigma)\subset \Sigma$ by Claim \ref{claim1}. Without loss of generality, assume that $\sigma_{12}=1$ and $\sigma_{1,2k}=-1$. Let $C'=v_1v_2v_{2k-1}v_{2k}v_1$ and $C''=v_2v_3\ldots v_{2k-2}v_{2k-1}v_2$, then $\sigma(C'')=-1$ since $\sigma(C')=1$, but the length of $C''$ is $2k-2$, a contradiction.

\begin{claim}\label{claim3}
The length of shortest negative cycle must be three.
\end{claim}
Otherwise, assume that $(C_{k},\sigma)$ is a shortest negative cycle, then $k\geq5$ by Claim \ref{claim1}. We assume that $\sigma_{12}=1$ and $\sigma_{1k}=-1$ at first. If $v_2v_k\in E$, then $\sigma_{2k}=-1$. We can construct a new negative cycle $C'=v_2\ldots v_kv_2$, but the length of $C'$ is $k-1$, a contradiction. If $v_2v_k\notin E$, then $v_2v_{k-1}\in E$ and $\sigma_{2,k-1}\sigma_{k,k-1}=-1$ by Claim \ref{claim1}. Constructing a new negative cycle $C''=v_2\ldots v_{k-1}v_2$ with length $k-2$, a contradiction. We can get the similar contradiction when $\sigma(v_1v_2)=\sigma(v_1v_{k})=-1$.
\begin{claim}\label{claim4}
$\sigma(K_3)=-1$ for every $K_3\subset K_{n_1,n_2,n_3}$.
\end{claim}
We will prove the claim by using reduction to absurdity. Note that any two triangles with a common edge must have the same sign by Claim \ref{claim1}. Suppose that $C'=v_1v_2v_3v_1$, $C''=v'_1v'_2v'_3v'_1$, where $\sigma(C')\sigma(C'')=-1$ and $v_i, v'_i\in V_i$ for $1\leq i\leq3$. Let $C'''=v_1v'_2v_3v_1$. If $v_1=v'_1$, $v_2\neq v'_2$ and $v_3\neq v'_3$, then $\sigma(C')\sigma(C''')=1$ and $\sigma(C'')\sigma(C''')=1$, it is impossible. And this situation shows that any two triangles with a common vertex also have the same sign. If $v_1\neq v'_1$, $v_2\neq v'_2$ and $v_3\neq v'_3$, then $\sigma(C')\sigma(C''')=1$ and $\sigma(C'')\sigma(C''')=1$, it is impossible. Therefore, Claim \ref{claim4} holds by Claim \ref{claim3}.

By Claims \ref{claim1}-\ref{claim4}, we discover that $\Sigma$ is an unbalanced completed $3$-partite graph which satisfies the conditions in Lemma \ref{lem1.3}. However, the multiplicity of $-2$ is $n-2$, a contradiction.

\noindent{\bf{$\underline{\mbox{Subcase 2.2.}}$}} $g=4$.
Since the least eigenvalue of the principle submatrix indexed by $V(H_2)$ is always less than $-2$, we can get $\Sigma$ is a signed complete bipartite graph through the same methods in \cite{H.Q. Lin}. And then $\Sigma$ is balanced by Claim \ref{claim2} of Subcase 2.1 immediately.
\end{proof}

\end{document}